\theoremstyle{theorem}
\newtheorem{Def}{Definition}[section]
\newtheorem{Prop}[Def]{Proposition}
\newtheorem{Lem}[Def]{Lemma}
\newtheorem{Thm}[Def]{Theorem}
\theoremstyle{definition}
\newtheorem{Rem}[Def]{Remark}
\newcommand{\p}{\mathbb{P}}
\newcommand{\bR}{\mathbb{R}}
\newcommand{\mf}{\mathcal{F}}
\newcommand{\pr}{\mathbb{P}}
\newcommand{\bE}{\mathbb{E}}
\newcommand{\br}{\mathbb{R}} 
\newcommand{\sm}{\sigma}
\newcommand{\Om}{\Omega}
\begin{document}

\title{A tamed-adaptive Milstein scheme for stochastic differential equations with  low regularity coefficients}
\author{ Thi-Huong Vu \footnote{University of Transport and Communications, 3 Cau Giay, Lang Thuong, Dong Da, Hanoi, Vietnam. Email: vthuong@utc.edu.vn} \footnote{University of Science, Vietnam National University, Hanoi, 334 Thanh Xuan, Hanoi, Vietnam}  \quad  Hoang-Long Ngo\footnote{Hanoi National University of Education, 136 Xuan Thuy, Cau Giay, Hanoi, Vietnam. Email: ngolong@hnue.edu.vn}\quad  Duc-Trong Luong\footnote{Hanoi National University of Education, 136 Xuan Thuy, Cau Giay, Hanoi, Vietnam. Email: trongld@hnue.edu.vn} \quad Ngoc Khue Tran \footnote{Corresponding author. Faculty of Mathematics and Informatics, Hanoi University of Science and Technology, 1 Dai Co Viet, Bach Mai, Hanoi, Vietnam. Email: khue.tranngoc@hust.edu.vn} }

\maketitle
\begin{abstract} 
We propose a tamed-adaptive Milstein scheme for stochastic differential equations in which the first-order derivatives of the coefficients are locally H\"older continuous of order $\alpha$. We show that the scheme converges in the $L_2$-norm with a rate of $(1+\alpha)/2$ over both finite intervals $[0, T]$ and the infinite interval $(0, +\infty)$, under certain growth conditions on the coefficients.
\end{abstract} 

\textbf{Keywords:}  Milstein approximation; Tamed Milstein; Adaptive Milstein; Stochastic differential equations; Locally Lipschitz continuous; Locally H\"older continuous; Polynomial growth coefficient.
\section{Introduction}
This paper considers the numerical approximation for a one-dimensional process $X=(X_t)_{t \geq 0}$ which is a solution to the following stochastic differential equation (SDE)
\begin{equation} \label{eqn1}
X_t=x_0+\int_0^t \mu(X_s)ds+\int_0^t \sigma(X_s)dW_s, \quad x_0\in \mathbb{R}, \quad t \geq 0,
\end{equation}
where $W=(W_t)_{t\geq 0}$ is a one-dimensional standard Brownian motion defined on a complete filtered probability space $(\Om, \mf, (\mf_t), \pr )$. 

Stochastic differential equations are widely employed in modeling systems across various fields, including biology, physics, and economics \cite{I1, I3, I2}. Due to the limited availability of explicit solutions, numerical approximation methods have become indispensable for many applications. The convergence rates of these numerical schemes typically depend on the growth conditions and smoothness of the coefficients. 
It is well-known that for SDEs whose coefficients $\mu$ and $\sigma$ satisfy global Lipschitz continuity and linear growth conditions, the Euler–Maruyama method attains a strong convergence rate of order $1/2$. Furthermore, if  $\mu$ is once differentiable and $\sigma$ is twice differentiable, then the Milstein scheme  attains a higher strong convergence rate of order $1$ (see \cite{I31, I4, I5}). 

However, recent studies \cite{I6} revealed that for SDEs with superlinearly growing coefficients, classical explicit schemes such as the Euler-Maruyama and Milstein schemes may fail to converge in the $L_p$-norm. This limitation has motivated the development of various alternative numerical methods tailored for SDEs with non-globally Lipschitz coefficients. The tamed Euler-Maruyama scheme was introduced in \cite{I19} and further developed in \cite{Sabanis1, Sabanis2, NL19}. Another notable approach is the truncated Euler scheme introduced in \cite{Mao}. These modified schemes are designed to handle superlinear growth and have been shown to achieve a strong convergence rate of order 
$1/2$ for certain classes of SDEs, under appropriate one-sided Lipschitz and monotonicity conditions.
  
Several implicit Milstein-type schemes have been developed for nonlinear SDEs, which achieve strong convergence of order $1$; see, for example, \cite{I10, I12, I13}. However, it is important to emphasize that implicit methods generally entail significant computational cost, as they require solving nonlinear algebraic equations at each time step.
To overcome this limitation, explicit variants have been proposed. Notably, the tamed Milstein scheme introduced in \cite{I20} attains strong convergence of order $1$ for a class of SDEs with superlinearly growing coefficients. Besides, the truncated Milstein scheme, initially investigated in \cite{Mao} for SDEs with commutative noise, was shown in \cite{I16} to achieve the same convergence rate under the assumption that both the drift and diffusion coefficients possess continuous second-order derivatives and are polynomially bounded.
Further refinements and generalizations of the tamed and truncated Milstein schemes can be found in \cite{I21, I16, L22, I23, I26, I17, I25, I24, Jiang}, and references therein.

An alternative to fixed-step numerical methods is the class of adaptive schemes, in which each time step is dynamically determined based on the current state of the approximated process. Fang and Giles \cite{I27} proposed an adaptive Euler–Maruyama scheme tailored for SDEs with polynomially growing drift, satisfying a one-sided Lipschitz condition, and with bounded, globally Lipschitz continuous diffusion coefficients.
Building on this framework, a tamed-adaptive Euler-Maruyama scheme was developed in \cite{KLN, KNLT, KNLT25} by integrating the taming technique into the adaptive time-stepping strategy, thereby extending the applicability of explicit methods to a broader class of SDEs with superlinear growth.

The tamed-adaptive Euler-Maruyama scheme is applicable to SDEs whose drift coefficients are both locally Lipschitz continuous and one-sided Lipschitz continuous, and whose diffusion coefficients are locally 
$(\alpha + 1/2)$-H\"older continuous. In \cite{I28}, the authors introduced a class of path-dependent time-stepping strategies, wherein the step size is adaptively reduced when the numerical solution approaches the boundary of a prescribed sphere. This approach was employed to construct an explicit adaptive Milstein scheme applicable to SDEs with non-commutative noise, i.e., where the commutativity condition is not satisfied. It was demonstrated that this scheme achieves strong convergence of order $1$ under the assumption that the drift coefficient satisfies a one-sided Lipschitz condition.


In this paper, we propose a tamed-adaptive Milstein scheme for equation \eqref{eqn1}, which synthesizes key elements from both the tamed Milstein method and adaptive time-stepping strategies. \textcolor{black}{Notably, due to the adaptive control of the step size, it is not necessary to tame the drift coefficient $b$ or the diffusion coefficient $\sigma$. Instead, only the composite term $\sigma \sigma'$  requires taming. This distinction sets our approach apart from the methods introduced in \cite{KLN, KNLT, KNLT25}, highlighting a fundamental difference in the treatment of nonlinearities.}

The novel contribution of this work lies in the significant relaxation of the regularity assumptions imposed on the coefficients of the SDE. In contrast to prior studies that typically require global Lipschitz continuity, we assume only local $\alpha$-H\"older continuity of the first derivatives of the drift and diffusion coefficients, $\mu^{\prime}$ and $\sigma^{\prime}$.
Under these relaxed conditions, we prove that the proposed tamed-adaptive Milstein scheme  strongly converges in the $L_2$-norm with a rate of $(1+\alpha) / 2$, thereby quantifying the influence of the regularity of $\mu^{\prime}$ and $\sigma^{\prime}$ on the convergence behavior of the Milstein method. Notably, when $\alpha=1$, the strong convergence rate reaches the optimal order of $1$.
Moreover, we conduct a detailed analysis of the strong convergence rate on both finite time intervals $[0, T]$, for fixed $T>0$, and the infinite time horizon $[0, \infty)$. The study of numerical schemes over infinite time intervals has only recently attracted attention in the literature (see \cite{I27, KLN, KNLT, KNLT25}). In this context, the integrability properties of both the exact and approximate solutions play a critical role in determining the attainable convergence rate.

The remainder of this paper is structured as follows. In Section \ref{sec:2}, we outline the assumptions and main results of the model. Specifically, we define the tamed-adaptive Milstein scheme and present key theorems regarding moment estimates for the approximate process and the strong convergence of the scheme. All proofs are deferred to Section \ref{sec:3}. In Section \ref{sec:nume}, we provide a numerical analysis of the tamed-adaptive Milstein scheme to illustrate the theoretical findings. \textcolor{black}{We also compare the performance of the tamed-adaptive Milstein scheme with that of the tamed Milstein scheme proposed in \cite{I23}.} Technical estimates for the increments of both exact and approximate solutions are provided in Section \ref{sec:app}.


\section{Model assumptions and main results} 
\label{sec:2}
For each $\alpha \in (0,1]$ and $l \geq 0$,  let $C^{1+\alpha}_l$ denote the class of differentiable functions $f: \mathbb{R} \to \mathbb{R}$ such that 
$$|f'(x) - f'(y)| \leq L |x-y|^\alpha (1 + |x|^{l} + |y|^{l}), \text{ for all } x, y \in \mathbb{R},$$
for some positive constant $L$. 

Note that if $0< \alpha < \alpha' \leq 1$ then $C^{1+\alpha'}_l \subset C^{1+\alpha}_{l + \alpha' - \alpha}$, and if $0 \leq l < l'$ then $C^{1+\alpha}_{l} \subset C^{1+\alpha}_{l'}$.

Throughout the paper, we suppose that the coefficients $\mu$ and $\sigma$ of the equation \eqref{eqn1} satisfy the following assumptions. 
\begin{itemize}
\item[\bf A0.] $\mu, \sigma \in C^{1+\alpha}_l$ for some $\alpha \in (0,1]$ and $ l \geq 0$.
\item[\bf A1.]  There exist constants  $\gamma \in \mathbb R$,  $\eta \in [0, +\infty)$ and $p_0 \in [4(l + \alpha+1), +\infty)$ such that  for any $x \in \br$, it holds
	$$x\mu(x) + \dfrac{p_0-1}{2}  \sm^2(x)  \leq \gamma  x^2+\eta.$$
\item[\bf A2.] There exists a constant $\lambda \in \mathbb{R}$ such that for any $ x, y \in \mathbb{R}$, it holds 
 $$ (x-y)(\mu(x)- \mu(y))+ \dfrac{1}{2}|\sm(x)-\sm(y)|^2 \leq \lambda|x-y|^2.$$
 \end{itemize}
Assumptions \textbf{A0} and \textbf{A1} guarantee the existence of a unique, adapted process $X = (X_t)_{t \geq 0}$ that solves equation \eqref{eqn1} (see \cite[Theorem 2.1]{NL19}). We now  state  the moment estimates for the exact solution, which are drawn from Proposition 2.3 in \cite{KLN}.
 \begin{Prop}\label{moment nghiem dung}\cite[Proposition 2.3]{KLN}
	Let $X=(X_t)_{t \geq 0}$ be a solution to equation \eqref{eqn1}. Then, for any $p \in (0,p_0]$ and $t\geq 0$,
	\begin{align*} 
		\bE \left[\vert X_t\vert^p\right] \le \begin{cases} \left \vert x_0^2 e^{2\gamma t}+\frac{\eta}{\gamma}(e^{2\gamma t}-1)\right\vert ^{p/2} & \text{ if \;} \gamma \not = 0,\\
			\left\vert x_0^2+ 2\eta t \right\vert ^{p/2} & \text{ if\; } \gamma  = 0. \end{cases} 
	\end{align*}
\end{Prop}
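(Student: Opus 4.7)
My plan is to bootstrap from the case $p=2$ to general $p \in (0, p_0]$, exploiting the fact that the right-hand side $a(t)$ defined by $a(t) := x_0^2 e^{2\gamma t} + \frac{\eta}{\gamma}(e^{2\gamma t}-1)$ for $\gamma\neq 0$ (and $a(t):= x_0^2 + 2\eta t$ for $\gamma=0$) is precisely the unique solution of the linear ODE $a'(t) = 2\gamma a(t) + 2\eta$ with $a(0)=x_0^2$. The target bound is then $\bE[|X_t|^p] \leq a(t)^{p/2}$.

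The main argument treats $p \in [2, p_0]$. First I would introduce the localizing stopping times $\tau_N := \inf\{t\geq 0 : |X_t|\geq N\}$ and apply It\^o's formula to $|X_{t\wedge \tau_N}|^p$ (which is $C^2$ since $p\geq 2$), obtaining
\begin{equation*}
|X_{t\wedge\tau_N}|^p = |x_0|^p + \int_0^{t\wedge\tau_N} p|X_s|^{p-2}\Big[X_s\mu(X_s) + \tfrac{p-1}{2}\sigma^2(X_s)\Big]ds + M_{t\wedge\tau_N},
\end{equation*}
where $M$ is the local martingale from the stochastic integral. Since $p\leq p_0$ and $\sigma^2\geq 0$, Assumption \textbf{A1} yields $X_s\mu(X_s)+\tfrac{p-1}{2}\sigma^2(X_s)\leq X_s\mu(X_s)+\tfrac{p_0-1}{2}\sigma^2(X_s)\leq \gamma X_s^2+\eta$. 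Taking expectations kills the martingale term, giving $\phi_N(t):=\bE[|X_{t\wedge\tau_N}|^p]\leq |x_0|^p + \int_0^t [p\gamma\, \phi_N(s) + p\eta\,\bE[|X_{s\wedge\tau_N}|^{p-2}]]\,ds$. By Jensen's inequality (the map $y\mapsto y^{(p-2)/p}$ is concave on $[0,\infty)$ when $p\geq 2$), $\bE[|X_{s\wedge\tau_N}|^{p-2}] \leq \phi_N(s)^{(p-2)/p}$.

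The key trick is the substitution $\psi_N(t) := \phi_N(t)^{2/p}$, which transforms the nonlinear Gronwall-type inequality into a linear one: formally, $\psi_N'(t) \leq 2\gamma \psi_N(t) + 2\eta$ with $\psi_N(0)=x_0^2$. By ODE comparison with $a(t)$, this forces $\psi_N(t)\leq a(t)$, i.e., $\phi_N(t)\leq a(t)^{p/2}$. Letting $N\to\infty$ and invoking Fatou's lemma (together with finiteness of the solution, guaranteed by \textbf{A0}--\textbf{A1}) establishes the bound for $p\in[2,p_0]$. For the remaining range $p\in(0,2)$, Jensen's inequality applied to the concave map $y\mapsto y^{p/2}$ gives $\bE[|X_t|^p] = \bE[(X_t^2)^{p/2}] \leq (\bE[X_t^2])^{p/2} \leq a(t)^{p/2}$.

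The main technical obstacle I anticipate is making the substitution $\psi = \phi^{2/p}$ rigorous when $\phi_N(t)$ could vanish; this is easily handled by working with $\phi_N(t)+\varepsilon$ and letting $\varepsilon\to 0^+$, or by restricting to the open set where $\phi_N>0$ and checking continuity at the boundary. A secondary subtlety is verifying that the stochastic integral $\int_0^{t\wedge\tau_N} p|X_s|^{p-2}X_s\sigma(X_s)dW_s$ is a true martingale, which follows from the boundedness of $|X_s|$ on $[0,\tau_N]$ combined with the continuity (hence local boundedness) of $\sigma$.
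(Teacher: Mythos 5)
The paper does not actually prove this proposition; it is quoted verbatim from \cite[Proposition 2.3]{KLN}, so there is no in-paper proof to compare against. Your overall strategy --- It\^o's formula for $|X|^p$, Assumption \textbf{A1} with $\frac{p-1}{2}\sigma^2 \le \frac{p_0-1}{2}\sigma^2$, Jensen to reduce $\bE[|X|^{p-2}]$ to $\phi^{(p-2)/p}$, the substitution $\psi=\phi^{2/p}$ to linearize, and Jensen again for $p\in(0,2)$ --- is the natural one and is essentially complete when $\gamma\ge 0$.

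There is, however, a genuine gap in the case $\gamma<0$, which is precisely the case the paper relies on (Remark \ref{rm1} and the infinite-horizon part of Theorem \ref{main Thr }). Two steps break down. First, the localization: you pass from $\int_0^{t\wedge\tau_N} p\gamma|X_s|^p\,ds$ to $\int_0^t p\gamma\,\phi_N(s)\,ds$, but pointwise one has $\mathbf{1}_{\{s\le\tau_N\}}\,p\gamma|X_s|^p \ge p\gamma|X_{s\wedge\tau_N}|^p$ when $\gamma<0$ (on $\{s>\tau_N\}$ the left side is $0$ while the right side is $p\gamma N^p<0$), so the inequality you need goes the wrong way; keeping only an upper bound by dropping the negative term destroys the sharp constant that makes $a(t)$ bounded as $t\to\infty$. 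Second, even granting the integral inequality $\phi_N(t)\le |x_0|^p+\int_0^t F(\phi_N(s))\,ds$ with $F(y)=p\gamma y+p\eta y^{(p-2)/p}$, the comparison ``$\phi_N\le v$ where $v'=F(v)$'' is not automatic: the standard Gronwall--Bihari argument requires $F$ nondecreasing, and for $\gamma<0$ the map $F$ is not monotone, so $u'=F(\phi_N)\le F(u)$ does not follow from $\phi_N\le u$. Both defects are repaired simultaneously by the device the paper itself uses in the proofs of Proposition \ref{mmX^}, Lemma \ref{lm1} and Lemma \ref{ulX_ut}: apply It\^o's formula to $e^{-p\gamma t}|X_t|^p$, so that \textbf{A1} reduces the drift integrand to the \emph{nonnegative} quantity $p\eta\,e^{-p\gamma s}|X_s|^{p-2}$; the stopped integral can then be enlarged to $\int_0^t$ without regard to the sign of $\gamma$, Fatou's lemma passes to the limit in $N$, and your $\psi=\phi^{2/p}$ substitution (or an induction over even integers $p$) closes the argument against the now linear inequality $\psi'\le 2\gamma\psi+2\eta$. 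I would also make explicit that $(p-2)/p\in[0,1)$ so the Jensen step is legitimate, and that the $\varepsilon$-regularization you mention handles $\phi_N=0$.
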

\begin{Rem}\label{rm1} When $\gamma<0$, there exists a positive constant $C = C_p$  such that  $$\sup_{t \geq 0} \bE \left[\vert X_t\vert^p\right] \le C_p.$$
\end{Rem}


For each $\Delta \in (0,1)$, the tamed-adaptive Milstein approximation scheme of equation \eqref{eqn1}  is defined by
\begin{equation}\label{Milstein1}
\begin{cases} 
t_0=0, \quad \widehat{X}_0=x_0, \quad t_{k+1}=t_k+h(\widehat{X}_{t_k})\Delta,\\ 
  \widehat{X}_{t_{k+1}}= \widehat{X}_{t_k}+\mu(\widehat{X}_{t_k})\left(t_{k+1}-t_k\right)+\sm(\widehat{X}_{t_k})(W_{t_{k+1}}-W_{t_{k}}) + \dfrac{1}{2}q_{\Delta}(\widehat{X}_{t_k})\left((W_{t_{k+1}}-W_{t_k})^2 -(t_{k+1}-t_k)\right),
\end{cases} 
\end{equation}
where $$q_{\Delta}(x)= \dfrac{\sm(x)\sm'(x)}{1+\Delta^{1/2}|\sm(x)\sm'(x)|},$$
and 
\begin{equation} \label{chooseh} 
h(x)=\dfrac{h_0}{(1+|\mu(x)|^2+|\mu'(x)|+|\sm(x)|^4+|\sm'(x)|^4+|q_\Delta (x)|+|x|^{l_0})^2},
\end{equation}
 for any $x \in \br$ and some constants $l_0 \ge  2$, $h_0>0$. 

\textcolor{black}{The adaptive Euler-Maruyama schemes in \cite{I27, KLN, KNLT, KNLT25} were designed to control the growth rate of the solution when $b$ and $\sigma$ take large values.  The term $\sigma \sigma'$, which appears in the classical Milstein scheme, can grow even faster than $b$ and $\sigma$. 
Therefore, in the above scheme, we had to apply taming to this term to ensure the integrability of the approximate solution.}

Now, we provide conditions to guarantee that $t_k \uparrow \infty$ as $k \uparrow \infty$, which shows that the tamed-adaptive Milstein approximation scheme \eqref{Milstein1} is well-defined. 

\begin{Prop}\label{Prop3.1} It holds 
	\begin{align} \label{tkToInf}
	\lim\limits_{k \to +\infty} t_k =+\infty \quad \text{a.s.} 
	\end{align}
\end{Prop}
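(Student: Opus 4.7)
The plan is to argue by contradiction. Since $(t_k)$ is non-decreasing, $\tau_\infty := \lim_{k\to\infty} t_k$ is a well-defined random variable with values in $(0,+\infty]$. Suppose $\mathbb{P}(\tau_\infty < \infty) > 0$. On $\{\tau_\infty < \infty\}$, $\sum_{k} \Delta_k = \tau_\infty < \infty$, so $h(\widehat{X}_{t_k}) = \Delta_k/\Delta \to 0$ a.s. Since $\mu,\sigma\in C^{1+\alpha}_l$ by Assumption \textbf{A0}, each of $\mu,\sigma,\mu',\sigma',q_\Delta$ grows polynomially in $|x|$, so the denominator in \eqref{chooseh} is bounded above by $C(1+|x|^L)^2$ for some $L\ge l_0$, giving the pointwise lower bound $h(x)\ge c_0(1+|x|^L)^{-2}$. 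Consequently $|\widehat{X}_{t_k}|\to+\infty$ a.s.\ on $\{\tau_\infty<\infty\}$, and I must rule this blow-up out.

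The main tool is a one-step second-moment recursion for $V_k:=|\widehat{X}_{t_k}|^2$. Exploiting that $t_{k+1}$ is $\mathcal{F}_{t_k}$-measurable (so that $W_{t_{k+1}}-W_{t_k}\mid\mathcal{F}_{t_k}\sim\mathcal{N}(0,\Delta_k)$), a direct expansion of \eqref{Milstein1} yields
\begin{align*}
\mathbb{E}\!\left[V_{k+1}\,\middle|\,\mathcal{F}_{t_k}\right] = V_k + \bigl(2\widehat{X}_{t_k}\mu(\widehat{X}_{t_k}) + \sigma(\widehat{X}_{t_k})^2\bigr)\Delta_k + \mu(\widehat{X}_{t_k})^2 \Delta_k^2 + \tfrac{1}{2} q_\Delta(\widehat{X}_{t_k})^2 \Delta_k^2.
\end{align*}
The design of $h$ in \eqref{chooseh} forces $\mu(x)^2 h(x)$ and $q_\Delta(x)^2 h(x)$ to be uniformly bounded, so the two $\Delta_k^2$ terms are dominated by $C_0\Delta_k$. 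Assumption \textbf{A1} (valid since $p_0\ge 2$) gives $2x\mu(x)+\sigma(x)^2 \le 2\gamma x^2 + 2\eta$, hence
\begin{align*}
\mathbb{E}\!\left[V_{k+1}\,\middle|\,\mathcal{F}_{t_k}\right] \le (1+2\gamma\Delta_k) V_k + C_1 \Delta_k.
\end{align*}

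Finally, I would convert this recursion into a supermartingale that rules out the blow-up. Fix $T>0$, set $n_T:=\inf\{k:t_k>T\}$ and
\begin{align*}
\widetilde{U}_k := e^{-2\gamma t_k} V_k - C_1 \sum_{j=0}^{k-1} \Delta_j\, e^{-2\gamma t_{j+1}}.
\end{align*}
The elementary inequality $e^{-2\gamma s}(1+2\gamma s)\le 1$ (valid for any $\gamma\in\mathbb{R}$ and $s\ge 0$), combined with the recursion, shows that $(\widetilde{U}_k)$ is a supermartingale. The stopped version $(\widetilde{U}_{k\wedge n_T})$ is bounded below by a deterministic constant depending only on $T,\gamma,h_0,\Delta$, so Doob's convergence theorem yields an a.s.\ finite limit. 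On $\{\tau_\infty\le T\}=\{n_T=\infty\}$, the compensator $\sum_j \Delta_j e^{-2\gamma t_{j+1}}$ is absolutely summable and $e^{2\gamma t_k}\to e^{2\gamma\tau_\infty}\in(0,\infty)$, so $V_k$ converges to a finite limit a.s.\ on this event. This contradicts $V_k\to+\infty$ from the first step, once $T$ is chosen large enough that $\mathbb{P}(\tau_\infty\le T)>0$.

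The principal obstacle is this final step: identifying the correct Lyapunov compensator that absorbs the remainder $C_1\Delta_k$ uniformly in the sign of $\gamma$, and verifying both integrability and the supermartingale inequality despite the coupling between the state $\widehat{X}_{t_k}$ and the time grid induced by $\Delta_k=h(\widehat{X}_{t_k})\Delta$.
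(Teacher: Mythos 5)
Your proof is correct, and it takes a genuinely different route from the paper. The paper proves \eqref{tkToInf} by introducing an auxiliary truncated scheme $\widehat{X}^H$ (with $\mu^H,\sigma^H,h^H$ cut off outside $\{|x|\le H\}$ so that it remains an It\^o process), controlling $\mathbb{E}\bigl[\sup_{t\le T}\sqrt{|\widehat{X}^H_t|}\bigr]$ uniformly in $H$ via the Yamada--Watanabe functions $\phi_{\delta\varepsilon}$ and Proposition IV.4.7 of Revuz--Yor, and then localizing: on $\{\sup_t|\widehat{X}^H_t|\le H\}$ the two schemes coincide, whence $\limsup_k\mathbb{P}(t_k\le T)\le C/\sqrt H\to0$. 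You instead work directly on the discrete chain: you observe that failure of $t_k\to\infty$ forces $h(\widehat X_{t_k})\to0$ and hence $|\widehat X_{t_k}|\to\infty$ (using only the polynomial upper bound on the denominator of \eqref{chooseh}), and you rule out this blow-up with an exact one-step conditional second-moment identity (the cross terms vanish by the strong Markov property, exactly the paper's \eqref{markov2}), the taming built into $h$ (which makes $\mu^2 h$ and $q_\Delta^2 h$ uniformly bounded, the paper's \eqref{eqn:inc}), assumption \textbf{A1} with $p_0\ge2$, and the discounted supermartingale $\widetilde U_k$ via $1+u\le e^u$. All the individual steps check out: $t_{k+1}$ is $\mathcal F_{t_k}$-measurable so the conditional Gaussian moments are legitimate; $V_k$ is integrable by induction from the recursion; $n_T$ is a stopping time for the discrete filtration; and the stopped compensator is deterministically bounded since $t_{n_T}\le T+h_0\Delta$. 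Your argument is considerably more elementary — it avoids the auxiliary $H$-scheme, the Yamada--Watanabe machinery, and the maximal inequality — and it only needs second moments, whereas the paper's construction is built to also serve as a template for the continuous-time interpolant and the higher-moment estimates of Proposition \ref{mmX^}. The one point worth spelling out if you write this up is the integrability of $V_k$ (and hence of $\widetilde U_k$) before invoking the supermartingale property; it follows by induction from $\mathbb{E}[V_{k+1}]\le(1+2|\gamma|h_0\Delta)\mathbb{E}[V_k]+C_1h_0\Delta$, but it should be stated.
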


The proof of Proposition \ref{Prop3.1} is deferred to Section \ref{proof tk}.

For any $t\geq 0$, we define the closest time point before $t$ by $\underline{t} := \max \left\{t_{k}: t_{k} \leq t\right\}$.
 Note that $\underline{t}$ is a stopping time. Consequently, we define the standard continuous interpolant by
\begin{equation}\label{Milstein3}
\widehat{X}_{t}=\widehat{X}_{\underline{t}}+\mu(\widehat{X}_{\underline{t}})(t-\underline{t})+\sigma(\widehat{X}_{\underline{t}})\left(W_{t}-W_{\underline{t}}\right)+\dfrac{1}{2}q_{\Delta}(\widehat{X}_{\underline{t}})\left(\left(W_{t}-W_{\underline{t}}\right)^2- (t- \underline{t})\right).
\end{equation}
Thus, $\widehat{X}=(\widehat{X}_t)_{t \geq 0}$ satisfies the following SDE
\begin{equation*}
d \widehat{X}_{t}= \mu (\widehat{X}_{\underline{t}}) dt+\sm(\widehat{X}_{\underline{t}})  dW_{t} +q_{\Delta}( \widehat{X}_{\underline{t}})\left(W_{t}-W_{\underline{t}}\right)  dW_{t}, \qquad \widehat{X}_{0} = x_0.
\end{equation*}

Next, we show the following crucial result on the moment estimates of the modified Milstein approximation.
\begin{Prop}\label{mmX^}
	 For any  $ l_0 \geq 2$,  and $k 
	\leq p_0/2,$  there exists a positive constant $C$ which does not depend on $\Delta$ such that,  for any $t\geq 0$,
	\begin{align}\label{EXtmu2}
	\bE \left[|\widehat{X}_t|^{2k} \right] \vee \bE \left[|\widehat{X}_{\underline{t}}|^{2k}\right] \le \left\{ \begin{array}{l l}
	Ce^{{2k\gamma t}} \quad &\text{ if\; \;} \gamma  > 0,\\
	C(1+t)^{k} \quad &\text{ if\; \;} \gamma =0,\\
	C\quad &\text{ if\; \;} \gamma <0.
	\end{array} \right. 
	\end{align}
	\end{Prop}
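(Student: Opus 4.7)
The plan is to establish the three cases in one stroke by applying It\^{o}'s formula to $V(x)=x^{2k}$ on the continuous interpolant $(\widehat{X}_t)_{t\ge 0}$ defined in \eqref{Milstein3}, exploiting Assumption \textbf{A1} for the main term and controlling the discretisation error via the step-size function $h$ defined in \eqref{chooseh}. Since $\widehat{X}$ satisfies
$$d\widehat{X}_t = \mu(\widehat{X}_{\underline{t}})\,dt + \bigl[\sigma(\widehat{X}_{\underline{t}}) + q_\Delta(\widehat{X}_{\underline{t}})(W_t-W_{\underline{t}})\bigr]dW_t,$$
It\^{o}'s formula together with a standard localisation (to ensure the stochastic integrals are true martingales) yields
$$\bE[V(\widehat{X}_t)] = V(x_0) + \bE\!\int_0^t \!\Bigl\{2k\widehat{X}_s^{2k-1}\mu(\widehat{X}_{\underline{s}}) + k(2k-1)\widehat{X}_s^{2k-2}\bigl[\sigma(\widehat{X}_{\underline{s}})+q_\Delta(\widehat{X}_{\underline{s}})(W_s-W_{\underline{s}})\bigr]^2\Bigr\}ds.$$

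The main step is to rewrite the integrand as a \emph{principal part} evaluated entirely at $\widehat{X}_{\underline{s}}$ plus a \emph{discretisation remainder}. Writing $\widehat{X}_s^{2k-1}=\widehat{X}_{\underline{s}}^{2k-1}+(\widehat{X}_s^{2k-1}-\widehat{X}_{\underline{s}}^{2k-1})$ and similarly for $\widehat{X}_s^{2k-2}$, the principal part is
$$2k\widehat{X}_{\underline{s}}^{2k-2}\Bigl(\widehat{X}_{\underline{s}}\mu(\widehat{X}_{\underline{s}}) + \tfrac{2k-1}{2}\sigma^2(\widehat{X}_{\underline{s}})\Bigr) \le 2k\gamma\,\widehat{X}_{\underline{s}}^{2k} + 2k\eta\,\widehat{X}_{\underline{s}}^{2k-2},$$
where Assumption \textbf{A1} applies since $2k\le p_0$. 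By Young's inequality, this is bounded by $(2k\gamma+\varepsilon)\widehat{X}_{\underline{s}}^{2k} + C_\varepsilon$ for any $\varepsilon>0$ (and by $2k\gamma\,\widehat{X}_{\underline{s}}^{2k}+C$ without the $\varepsilon$-perturbation when $\gamma\neq 0$).

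The key technical step is to show that the expectation of the discretisation remainder is bounded by $C(1+\bE[\widehat{X}_{\underline{s}}^{2k}])$ uniformly in $\Delta$. The remainder is a sum of terms of the form
$$\widehat{X}_{\underline{s}}^{2k-2-j}(\widehat{X}_s-\widehat{X}_{\underline{s}})^{1+j}\mu(\widehat{X}_{\underline{s}}),\qquad \widehat{X}_{\underline{s}}^{2k-3}(\widehat{X}_s-\widehat{X}_{\underline{s}})\sigma^2(\widehat{X}_{\underline{s}}),\qquad \widehat{X}_s^{2k-2}q_\Delta(\widehat{X}_{\underline{s}})(W_s-W_{\underline{s}})\sigma(\widehat{X}_{\underline{s}}),$$
and $\widehat{X}_s^{2k-2}q_\Delta^2(\widehat{X}_{\underline{s}})(W_s-W_{\underline{s}})^2$. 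Plugging in the decomposition $\widehat{X}_s-\widehat{X}_{\underline{s}} = \mu(\widehat{X}_{\underline{s}})(s-\underline{s})+\sigma(\widehat{X}_{\underline{s}})(W_s-W_{\underline{s}})+\tfrac12 q_\Delta(\widehat{X}_{\underline{s}})((W_s-W_{\underline{s}})^2-(s-\underline{s}))$, each remainder term factors as a polynomial $P(\widehat{X}_{\underline{s}},\mu,\mu',\sigma,\sigma',q_\Delta)$ times a power of $(s-\underline{s})$ and/or $(W_s-W_{\underline{s}})$. Conditioning on $\mathcal{F}_{\underline{s}}$, the Brownian increments contribute at most factors of $(s-\underline{s})^{j/2}\le (h(\widehat{X}_{\underline{s}})\Delta)^{j/2}$, so every such term inherits at least one factor of $h(\widehat{X}_{\underline{s}})$. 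The definition \eqref{chooseh} was designed precisely so that $h(x)\cdot P(x)\le C(1+|x|^{2k})$ for every such polynomial $P$ arising here; combining $h$'s denominator (which dominates $|\mu(x)|^2$, $|\sigma(x)|^4$, $|\sigma'(x)|^4$, $|q_\Delta(x)|$, $|x|^{l_0}$ with $l_0\ge 2$) with elementary estimates yields the claim.

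Substituting back and using $\bE[\widehat{X}_{\underline{s}}^{2k}]\le \bE[\widehat{X}_s^{2k}]+\text{error}$ (which by the same step-size argument is bounded by $C(1+\bE[\widehat{X}_{\underline{s}}^{2k}])$), we arrive at a Gr\"{o}nwall-type inequality
$$\bE[\widehat{X}_t^{2k}]\le x_0^{2k} + \int_0^t \bigl[(2k\gamma+\varepsilon)\bE[\widehat{X}_{\underline{s}}^{2k}] + C\bigr]ds,$$
from which the three cases $\gamma>0$, $\gamma=0$, $\gamma<0$ follow by standard Gr\"{o}nwall and integrating factors (choosing $\varepsilon<-2k\gamma$ in the dissipative case to obtain a uniform bound). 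The identical argument applied at the grid times gives the bound for $\bE[\widehat{X}_{\underline{t}}^{2k}]$. The main obstacle is the third paragraph: verifying that each polynomial factor arising from the Milstein remainder is indeed absorbed by $h(x)$; the crucial role of the quartic taming in the diffusion and its derivative inside \eqref{chooseh}, together with the taming of $q_\Delta$ itself, is what distinguishes this scheme from the Euler-type adaptive methods of \cite{KLN, KNLT, KNLT25}.
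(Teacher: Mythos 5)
Your overall architecture (It\^o's formula on the interpolant, Assumption \textbf{A1} for the principal part evaluated at $\widehat{X}_{\underline{s}}$, localisation, and absorption of the Milstein remainder using the step-size function \eqref{chooseh}) matches the paper's, and your observation that every remainder term inherits a factor of $h(\widehat{X}_{\underline{s}})$ through $(s-\underline{s})\le h(\widehat{X}_{\underline{s}})\Delta$ is exactly the content of the paper's estimate \eqref{eqn:inc} and Lemmas \ref{Lem:-gmX^p1}--\ref{Lem:$X^pq^21$}. However, there is a genuine gap in how you close the argument. The correct conclusion of the remainder analysis is that the error terms (and the $\eta$-term $2k\eta\,\widehat{X}_{\underline{s}}^{2k-2}$ from \textbf{A1}) are bounded by $C\sum_{i=0}^{2k-2}|\widehat{X}_{\underline{s}}|^{i}$, i.e.\ they are of \emph{strictly lower order} than $2k$. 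Your proposal discards this and either (a) bounds the remainder by $C(1+\bE[\widehat{X}_{\underline{s}}^{2k}])$, which would add an uncontrolled positive constant to the Gr\"onwall coefficient and destroy even the $\gamma<0$ case, or (b) absorbs the lower-order terms by Young's inequality into $\varepsilon\,\widehat{X}_{\underline{s}}^{2k}+C_\varepsilon$. Option (b) yields $\bE[\widehat{X}_t^{2k}]\le C_\varepsilon e^{(2k\gamma+\varepsilon)t}$, which is strictly weaker than the claimed rates when $\gamma\ge 0$: for $\gamma=0$ it gives exponential growth $e^{\varepsilon t}$ rather than the polynomial bound $C(1+t)^{k}$, and for $\gamma>0$ it gives $e^{(2k\gamma+\varepsilon)t}$ rather than $e^{2k\gamma t}$; one cannot send $\varepsilon\to 0$ because $C_\varepsilon\to\infty$. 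Your parenthetical claim that the bound $2k\gamma\,\widehat{X}_{\underline{s}}^{2k}+C$ holds ``without the $\varepsilon$-perturbation when $\gamma\neq 0$'' is false for $k\ge 2$, since $\eta\,x^{2k-2}$ is unbounded.

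The missing idea is the paper's induction on the moment order $k$: assuming \eqref{EXtmu2} holds up to order $2k-2$, the lower-order sum $\sum_{i=0}^{2k-2}\bE[|\widehat{X}_{\underline{s}}|^{i}]$ is already controlled by $Ce^{(2k-2)\gamma s}$, $C(1+s)^{k-1}$, or $C$, and substituting this into $\bE[e^{-2k\gamma t}|\widehat{X}_t|^{2k}]\le x_0^{2k}+C\int_0^t e^{-2k\gamma s}\sum_{i=0}^{2k-2}\bE[|\widehat{X}_{\underline{s}}|^{i}]\,ds$ produces exactly the stated rates in all three cases (the integral $\int_0^t e^{-2\gamma s}ds$ converges for $\gamma>0$, and $\int_0^t(1+s)^{k-1}ds\le C(1+t)^k$ for $\gamma=0$). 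Without this bootstrap (or an equivalent one), the sharp growth rates for $\gamma\ge 0$ are not recoverable. A secondary, smaller point: the transfer from $\bE[|\widehat{X}_t|^{2k}]$ to $\bE[|\widehat{X}_{\underline{t}}|^{2k}]$ is needed \emph{inside} the induction (the Gr\"onwall integrand involves $\widehat{X}_{\underline{s}}$, not $\widehat{X}_s$), so it should be stated as a separate estimate such as the paper's \eqref{qh1} rather than as a repetition of ``the identical argument at grid times.''
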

	The proof of Proposition \ref{mmX^} is given in Section \ref{proof moment}.
	\begin{Rem}\label{rm2}

	If $\gamma<0$, then for any $ 0\leq p \leq 2 \textcolor{black}{\lfloor p_0/2\rfloor}$ where \textcolor{black}{$\lfloor\;\rfloor$} denotes the integer part,  there exists a positive constant $C=C_p$ which does not depend on $\Delta$ such that 
	$$\sup_{t \geq 0}  \left(\bE \left[|\widehat{X}_t|^p\right] \vee \bE \left[|\widehat{X}_{\underline{t}}|^p\right] \right) \leq C.$$
	\end{Rem}

Let $N(T)$ be the number of timesteps required for a path approximation on $[0, T]$ for any $T > 0$. We have the following estimate on the expectation of $N(T)$.
\begin{Prop} \label{ulN} 
There exists a positive constant $C$, which does not depend on $
\Delta$ or $T$, such that 
$$\mathbb{E}\left[\left|N(T)-1\right|\right] \leq \frac{CT}{\Delta} \sup_{0\leq s \leq T}\bE\left[ 1+ |\widehat{X}_{\underline{s}}|^{2l_0} + |\widehat{X}_{\underline{s}}|^{4(l + \alpha+1)}\right].$$
\end{Prop}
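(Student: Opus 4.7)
The plan is to convert the integer count $N(T)-1$ into an integral of $1/h$ along the continuous-time trajectory, then take expectations and apply the moment estimates of Proposition~\ref{mmX^}. The central pathwise observation is that on each subinterval $[t_k,t_{k+1})$ with $k\le N(T)-2$, we have $\widehat{X}_{\underline{s}}=\widehat{X}_{t_k}$ and $t_{k+1}-t_k=h(\widehat{X}_{t_k})\Delta$, so that
\[
\int_{t_k}^{t_{k+1}}\frac{ds}{h(\widehat{X}_{\underline{s}})}=\Delta.
\]
Summing over $k=0,\dots,N(T)-2$ and using $t_{N(T)-1}\le T$ yields the pathwise bound $(N(T)-1)\Delta\le\int_0^T ds/h(\widehat{X}_{\underline{s}})$. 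Fubini (the integrand being nonnegative, and $\widehat{X}_{\underline{s}}$ jointly measurable since $\underline{s}$ is a stopping time) then gives
\[
\mathbb{E}[|N(T)-1|]\le\frac{1}{\Delta}\int_0^T\mathbb{E}\!\left[\frac{1}{h(\widehat{X}_{\underline{s}})}\right]ds.
\]

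The remaining task is to dominate $1/h(x)$ by a polynomial of the prescribed shape $C(1+|x|^{2l_0}+|x|^{4(l+\alpha+1)})$. To do so, I would use the growth estimates forced by $\mu,\sigma\in C^{1+\alpha}_l$: integrating the H\"older estimate for the derivatives gives $|\mu(x)|\vee|\sigma(x)|\le C(1+|x|^{l+\alpha+1})$ and $|\mu'(x)|\vee|\sigma'(x)|\le C(1+|x|^{l+\alpha})$, and hence $|q_\Delta(x)|\le |\sigma(x)\sigma'(x)|\le C(1+|x|^{2l+2\alpha+1})$. Substituting into the definition \eqref{chooseh} of $h$ and expanding with the elementary inequality $(a_1+\cdots+a_m)^2\le m\sum a_j^2$, one collects like terms and reaches the desired polynomial bound on $1/h$ with constants independent of $\Delta$. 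Plugging this back into the previous display and replacing the time integral by $T$ times the supremum over $[0,T]$ produces exactly the inequality asserted in the proposition.

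The main obstacle, modest as it is, is the pathwise identity in the first paragraph, where one must be careful about what happens on the random set $\{k\le N(T)-2\}$ and about the gap between $t_{N(T)-1}$ and $T$; after that, the bound on $1/h$ is routine polynomial book-keeping. One should also verify that both moments on the right-hand side are indeed finite uniformly in $\Delta$, which is guaranteed by Proposition~\ref{mmX^} under the standing assumption $p_0\ge 4(l+\alpha+1)$.
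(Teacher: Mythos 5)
Your proposal follows essentially the same route as the paper: the paper's entire proof is the one-line pathwise bound $N(T)\le \int_0^T \frac{dt}{h(\widehat{X}_{\underline{t}})\Delta}+1$ (your telescoping over the subintervals $[t_k,t_{k+1})$ is a more careful justification of exactly this), followed by a polynomial bound on $h^{-1}$ and taking expectations. One caveat, which applies equally to the paper's own proof: the step you dismiss as ``routine polynomial book-keeping'' is where the argument actually strains. From the definition \eqref{chooseh} one has $h(x)^{-1}=h_0^{-1}\bigl(1+|\mu(x)|^2+|\mu'(x)|+|\sigma(x)|^4+|\sigma'(x)|^4+|q_\Delta(x)|+|x|^{l_0}\bigr)^2$, and squaring the $|\sigma(x)|^4$ term contributes $|\sigma(x)|^8\le C\bigl(1+|x|^{8(l+\alpha+1)}\bigr)$, which is not dominated by $|x|^{4(l+\alpha+1)}+|x|^{2l_0}$ in general (take $\sigma$ of exact order $|x|^{l+\alpha+1}$ with $l_0=2$); the same applies to $|\sigma'(x)|^8$ whenever $l+\alpha>1$. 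So the pointwise bound $h(x)^{-1}\le C\bigl(1+|x|^{2l_0}+|x|^{4(l+\alpha+1)}\bigr)$ that both you and the paper invoke does not follow from the stated growth estimates; the argument as written yields the proposition with $8(l+\alpha+1)$ in place of $4(l+\alpha+1)$ (and one would then want $p_0\ge 8(l+\alpha+1)$ for the right-hand side to be finite via Proposition \ref{mmX^}). Apart from this shared exponent issue, which is a defect of the paper's proof rather than of your strategy, your argument coincides with the paper's.
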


Now, we state the main result which establishes the strong convergence rate of the tamed-adaptive Milstein scheme. 

\begin{Thm}\label{main Thr } 
	Assume conditions \textbf{A0-A2} hold. 
	   Then, for any $T>0$,  $ l_0 \geq \max\{2,  \frac{4l}{3(1+\alpha)}\},$ $p_0 \ge 4(l + \alpha + 1)$, there exists a positive constant $C_T$ depending on $x_0, \gamma, \eta, \alpha, l, l_0, \lambda, p_0$, and  $T$ such that
	\begin{equation} \label{r1}
	\underset{0 \le t \le T}{\sup} \bE \left[  |\widehat{X}_t - X_t| ^2\right] \le 
	C_T \Delta^{1+\alpha}.
	\end{equation}
	Moreover, assume that $\lambda<0$ and $\gamma <0$, then there exists a positive constant $C$, which depends on $x_0, \gamma, \eta, \alpha, l, l_0, \lambda, p_0$, but is independent of $T$,  such that
	\begin{equation} \label{r2}
	\underset{t \ge 0}{\sup} \ \bE \left[  |\widehat{X}_t - X_t| ^2\right] \le C\Delta^{1+\alpha}.
	\end{equation} 
\end{Thm}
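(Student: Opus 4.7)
The plan is to derive a Gronwall-type integral inequality for $u(t):=\mathbb{E}[|X_t-\widehat X_t|^2]$ whose inhomogeneous term is of order $\Delta^{1+\alpha}$. Setting $e_t:=X_t-\widehat X_t$, combining \eqref{eqn1} with the continuous interpolant \eqref{Milstein3} gives
\[
de_t = \bigl(\mu(X_t)-\mu(\widehat X_{\underline t})\bigr)\,dt + \bigl(\sigma(X_t)-\sigma(\widehat X_{\underline t})-q_\Delta(\widehat X_{\underline t})(W_t-W_{\underline t})\bigr)\,dW_t.
\]
I apply It\^o's formula to $e_t^2$ and take expectations; Propositions~\ref{moment nghiem dung} and~\ref{mmX^} justify that the stochastic integral is a true martingale. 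Adding and subtracting $\mu(\widehat X_s)$ in the drift and $\sigma(\widehat X_s)$ in the diffusion, and applying \textbf{A2} to the diagonal part, the integrand is controlled by
\[
2\lambda e_s^2 + 2e_s\bigl(\mu(\widehat X_s)-\mu(\widehat X_{\underline s})\bigr) + 2A_sB_s + B_s^2,
\]
where $A_s:=\sigma(X_s)-\sigma(\widehat X_s)$ and $B_s:=\sigma(\widehat X_s)-\sigma(\widehat X_{\underline s})-q_\Delta(\widehat X_{\underline s})(W_s-W_{\underline s})$. The cross terms are handled by Young's inequality together with the polynomial growth bounds on $A_s$.

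For the drift remainder, I use $\mu\in C^{1+\alpha}_l$ to write $\mu(\widehat X_s)-\mu(\widehat X_{\underline s})=\int_0^1\mu'(\theta\widehat X_s+(1-\theta)\widehat X_{\underline s})\,d\theta\cdot(\widehat X_s-\widehat X_{\underline s})$ and invoke the increment estimates of $\widehat X$ deferred to Section~\ref{sec:app}. The $|\mu'|$, $|\mu|^2$ and $|x|^{l_0}$ factors in the denominator of \eqref{chooseh}, combined with the moment bounds of Proposition~\ref{mmX^}, absorb the polynomial weights and give $\mathbb{E}[|\mu(\widehat X_s)-\mu(\widehat X_{\underline s})|^2]\le C\Delta^2$. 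For $B_s$ I would split
\[
B_s = \underbrace{\sigma(\widehat X_s)-\sigma(\widehat X_{\underline s})-(\sigma\sigma')(\widehat X_{\underline s})(W_s-W_{\underline s})}_{B_s^{(1)}} + \underbrace{\bigl[(\sigma\sigma')(\widehat X_{\underline s})-q_\Delta(\widehat X_{\underline s})\bigr](W_s-W_{\underline s})}_{B_s^{(2)}}.
\]
For $B_s^{(1)}$, a Taylor expansion of $\sigma$ at $\widehat X_{\underline s}$ with integral remainder, together with the explicit increment of $\widehat X$ from \eqref{Milstein3}, cancels the leading Milstein term and leaves a remainder of size $|\widehat X_s-\widehat X_{\underline s}|^{1+\alpha}(1+|\widehat X_{\underline s}|^l)$; the moment bounds of Proposition~\ref{mmX^} and the adaptive step size --- the reason the thresholds $l_0\ge\frac{4l}{3(1+\alpha)}$ and $p_0\ge4(l+\alpha+1)$ are imposed --- yield $\mathbb{E}[|B_s^{(1)}|^2]\le C\Delta^{1+\alpha}$. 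For $B_s^{(2)}$ the elementary identity $|\sigma\sigma'(x)-q_\Delta(x)|\le\Delta^{1/2}|\sigma\sigma'(x)|^2$, combined with conditioning on $\mathcal F_{\underline s}$ (valid since $\underline s$ is a stopping time and $W_s-W_{\underline s}$ is independent of $\mathcal F_{\underline s}$), gives $\mathbb{E}[|B_s^{(2)}|^2]\le C\Delta^2\,\mathbb{E}[|\sigma\sigma'(\widehat X_{\underline s})|^4]$, again absorbed by the $|\sigma|^4,|\sigma'|^4$ contributions to $h$.

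Assembling these bounds and using Young's inequality with a small weight $\varepsilon>0$, I would obtain $u(t)\le(2\lambda+\varepsilon)\int_0^t u(s)\,ds + C(t)\Delta^{1+\alpha}$, where $C(t)$ is a polynomial of the available moment bounds of $X$ and $\widehat X$. For \eqref{r1}, Propositions~\ref{moment nghiem dung} and~\ref{mmX^} give $C(t)\le C_T$ on $[0,T]$ and the classical Gronwall inequality produces \eqref{r1}. For \eqref{r2}, under $\gamma<0$ the moments are bounded uniformly in $t$ by Remarks~\ref{rm1} and~\ref{rm2}, so $C(t)\le C$; choosing $\varepsilon$ small enough so that $2\lambda+\varepsilon<0$ (possible because $\lambda<0$), the Gronwall inequality yields $u(t)\le \frac{C\Delta^{1+\alpha}}{2|\lambda|-\varepsilon}(1-e^{(2\lambda+\varepsilon)t})$, which is bounded uniformly in $t\ge 0$.

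The hardest step is the analysis of $B_s^{(1)}$: with only $C^{1+\alpha}_l$ regularity, the Taylor remainder of $\sigma$ is merely H\"older in the increment $\widehat X_s-\widehat X_{\underline s}$, and that increment itself contains the rough term $\tfrac12 q_\Delta(\widehat X_{\underline s})\bigl((W_s-W_{\underline s})^2-(s-\underline s)\bigr)$. Expanding this carefully and tracking which polynomial weights in $\widehat X_{\underline s}$ arise, then verifying that they are exactly absorbed by $h(\widehat X_{\underline s})$ under the stated thresholds on $l_0$ and $p_0$, is the technical crux and explains the precise form of these constraints. A secondary delicate point is controlling the cross term $\mathbb{E}[A_s B_s]$ without losing the monotonicity constant $2\lambda$ that drives the uniform-in-time decay in \eqref{r2}; this requires choosing the Young weight $\varepsilon$ small enough to preserve the sign of $2\lambda+\varepsilon$, while still allowing the moments of $A_s$ to be absorbed by the step size and the higher-order moment bounds coming from $p_0\ge 4(l+\alpha+1)$.
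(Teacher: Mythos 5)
Your overall architecture (It\^o's formula for the squared error, assumption \textbf{A2} for the diagonal parts, the splitting of the diffusion error into $A_s$ and $B_s$ with $B_s^{(1)}$, $B_s^{(2)}$ as in the paper's Lemma~\ref{lm7}, and the moment/step-size bookkeeping) matches the paper's proof, and the treatment of the diffusion term is essentially correct. However, there is a genuine gap in your treatment of the drift remainder. You claim that
$\bE\bigl[|\mu(\widehat X_s)-\mu(\widehat X_{\underline s})|^2\bigr]\le C\Delta^2$,
but this is false: by \eqref{Milstein3} the increment $\widehat X_s-\widehat X_{\underline s}$ contains the leading term $\sigma(\widehat X_{\underline s})(W_s-W_{\underline s})$, whose conditional second moment is $\sigma^2(\widehat X_{\underline s})(s-\underline s)$, of order $\Delta$ and not $\Delta^2$ (this is exactly Lemma~\ref{lm5}(i); already for $\mu(x)=x$, $\sigma\equiv 1$ one has $\bE[|\mu(\widehat X_s)-\mu(\widehat X_{\underline s})|^2]\asymp s-\underline s\asymp\Delta$). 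Consequently, after Young's inequality the inhomogeneous term in your Gronwall inequality is only $O(\Delta)$, and your argument proves strong order $1/2$ (the Euler rate), not $(1+\alpha)/2$.

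The missing idea is the finer decomposition \eqref{diffxmu}: one must subtract the linearization $\mu'(\widehat X_{\underline s})(\widehat X_s-\widehat X_{\underline s})$ (whose remainder is $O(|\widehat X_s-\widehat X_{\underline s}|^{1+\alpha})$, hence $O(\Delta^{1+\alpha})$ after squaring), then split $\mu'(\widehat X_{\underline s})(\widehat X_s-\widehat X_{\underline s})$ into the part $\widehat X_s-\widehat X_{\underline s}-\sigma(\widehat X_{\underline s})(W_s-W_{\underline s})$, which \emph{is} $O(\Delta)$ in $L^2$ by Lemma~\ref{lm5}(ii), plus the rough leading term $\mu'(\widehat X_{\underline s})\sigma(\widehat X_{\underline s})(W_s-W_{\underline s})$. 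The latter cannot be killed by Cauchy--Schwarz against $e_s$; instead one writes $e_s=(e_s-e_{\underline s})+e_{\underline s}$, uses that $e_{\underline s}\,\mu'\sigma(W_s-W_{\underline s})$ has zero conditional expectation given $\mathcal F_{\underline s}$, and estimates $\bE[(e_s-e_{\underline s})(W_s-W_{\underline s})\,|\,\mathcal F_{\underline s}]$ directly (Lemma~\ref{lm8}), which yields the higher order $(s-\underline s)^{(3+\alpha)/2}$ plus a term $|\sigma(X_{\underline s})-\sigma(\widehat X_{\underline s})|(s-\underline s)$ that is reabsorbed into the error recursion. This conditioning trick is the technical crux of the drift analysis and is absent from your proposal. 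A secondary point: once Lemma~\ref{lm8} is used, the bound involves $|X_{\underline s}-\widehat X_{\underline s}|^2$ evaluated at the \emph{random} time $\underline s$, which cannot be folded into $u(s)=\bE[|e_s|^2]$ for a plain Gronwall argument; the paper circumvents this by working with $e^{-at}|e_t|^2$ and choosing $a=2\lambda+6\epsilon$ so that the coefficient of $|X_{\underline s}-\widehat X_{\underline s}|^2$ vanishes, after which no Gronwall lemma is needed at all.
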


The proof of Theorem \ref{main Thr } is deferred to Section \ref{proof3}.

\section{Proof of main results}
\label{sec:3}
Thanks to \textbf{A0} and the definition of $q_\Delta(x)$,  there exists a positive constant $L$ such that for any $x, y \in \mathbb{R}$, the following estimates hold 
\begin{align} 
& |\sm(x)- \sm(y)| \vee |\mu(x)- \mu(y)|  \leq L(1+|x|^{l + \alpha}+|y|^{l + \alpha})|x-y|; \label{U2}\\ 
& |\mu(x)| \vee |\sm(x)| \leq L(1+|x|^{l + \alpha + 1}); \label{U4}\\ 
& |\sm'(x)- \sm'(y)| \vee |\mu'(x)- \mu'(y)| \leq L(1+|x|^{l}+|y|^{l})|x-y|^\alpha; \label{U6}\\
& |\sm(x)\sm'(x)- q_{\Delta}(x)| \leq L \Delta^{1/2}|\sm(x) \sm'(x)|^2; \label{U8}\\
& |q_{\Delta}(x)| \leq \frac{1}{\Delta^{1/2}}. \label{C2}
\end{align}

Note that if $f \in C^{1+\alpha}_l$, then, for any $x, y \in \mathbb{R}$,
	\begin{align}\label{C^1+alpha}
	|f(x)-f(y)-f'(y)(x-y)|& = \left|\int_0^1\left[f'(y+\theta(x-y))-f'(y)\right](x-y)d\theta \right|
	\notag\\
	&\leq \left|\int_0^1L\left(1+|y+\theta(x-y)|^{l}+|y|^{l}\right)|\theta(x-y)|^{\alpha}(x-y)d\theta \right| \notag\\
	&\leq \textcolor{black}{C} \left(1+|x|^{l}+|y|^{l}\right)|x-y|^{1+\alpha},
	\end{align}
for some positive constant $\textcolor{black}{C}$.

\textcolor{black}{In the proofs of the following results, we will use $C_1, C_2, \dots$ to denote positive constants that do not depend on $H$, $\Delta$ or the time variables $s, t, T$. Moreover, each $C_i$, $i\geq 1$, may take different values in the proof of each lemma, proposition or theorem}.

\subsection{Proof of Proposition \ref{Prop3.1} } \label{proof tk}
In \cite{I27, KLN, KNLT}, the authors construct a $K$-scheme, $\hat{X}^K$, based on a projection operator, to establish \eqref{tkToInf}. However, since the projected process $\hat{X}^K$ is not an It\^o process, analyzing it using  It\^o's formula becomes challenging. This complexity is further amplified when applying such an approach to our Milstein scheme.

In response, we consider an alternative method to demonstrate \eqref{tkToInf}. Specifically, following the idea in \cite{KNLT25}, we construct an auxiliary process, $\hat{X}^K$, by truncating the coefficients $\mu$ and $\sigma$ so that $\hat{X}^K$ remains an It\^o process. To address the superlinear growth of $\sigma$, we employ the Yamada-Watanabe approximation (see \cite{GR, Yamada}), which is defined as follows.
For each $\delta>1$ and $\varepsilon>0$, there exists a continuous function $\psi_{\delta\varepsilon}:\mathbb{R}\to\mathbb{R}_+$ with supp$\psi_{\delta\varepsilon}\subset[\frac{\varepsilon}{\delta},\varepsilon]$ such that
\begin{equation*}
\int_{\frac{\varepsilon}{\delta}}^\varepsilon \psi_{\delta\varepsilon}(z)dz=1, \quad 0\leq \psi_{\delta\varepsilon}(z)\leq \dfrac{2}{z\log \delta},\quad z>0.
\end{equation*}
Then, we define
\begin{equation*}
\phi_{\delta\varepsilon}(x):=\int_0^{|x|}\int_0^y\psi_{\delta\varepsilon} (z) dzdy,\quad x\in\mathbb{R}.
\end{equation*}
 The function $\phi_{\delta \epsilon}$ possesses the following  properties: for any $x\in\mathbb{R}\setminus \{0\}$,
\begin{enumerate}[\bf  YW1.]
	\item $\phi'_{\delta\varepsilon}(x)=\dfrac{x}{|x|}\phi'_{\delta\varepsilon}\left(|x|\right)$;
	\item  $0\leq \left|\phi'_{\delta\varepsilon}(x)\right|\leq 1$;
	\item  $|x|\leq \varepsilon+\phi_{\delta\varepsilon}(x)$;
	\item  $\dfrac{\phi'_{\delta\varepsilon}(|x|)}{|x|}\leq \dfrac{\delta}{\varepsilon}$;
	\item  $\phi''_{\delta\varepsilon}\left(|x|\right)=\psi_{\delta\varepsilon}(|x|)\leq\dfrac{2}{|x| \log\delta}\mathbf{1}_{\left[\frac{\varepsilon}{\delta},\varepsilon\right]}(|x|)\leq \dfrac{2\delta}{\varepsilon\log \delta}$.
\end{enumerate}


\begin{proof}[Proof of Proposition \ref{Prop3.1} ]

For each $H>0$, we define a tamed-adaptive Milstein discretisation of equation \eqref{eqn1}  as follows
\begin{equation*}
\begin{cases} 
t_0^H=0, \quad \widehat{X}_0^H=x_0, \quad t_{k+1}^H=t_k^H+h^H(\widehat{X}^H_{t_k^H})\Delta,\\ 
\widehat{X}^H_{t_{k+1}^H}= \widehat{X}^H_{t_k^H}+\mu^H(\widehat{X}^H_{t_k^H})\left(t_{k+1}^H-t_k^H\right)+\sm^H(\widehat{X}^H_{t_k^H})(W_{t_{k+1}^H}-W_{t_{k}^H}) + \dfrac{1}{2}q_{\Delta}(\widehat{X}^H_{t_k^H})\left( (W_{t_{k+1}^H}-W_{t_k^H})^2 -(t_{k+1}^H-t_k^H) \right),
\end{cases} 
\end{equation*}
where 
\begin{align*}
 h^H(x)=\begin{cases}
h(x) & \text{if } |x| \leq H,\\
\dfrac{1}{1+H}& \text{if } |x|>H,
\end{cases}
\quad
 \mu^H(x)=\begin{cases}
\mu(x) & \text{if } |x| \leq H,\\
\dfrac{x}{1+x^2}+\mu(0)& \text{if } |x|>H,
\end{cases}
\quad 
 \sm^H(x)=\begin{cases}
\sm(x) & \text{if } |x| \leq H,\\
1& \text{if } |x|>H.
\end{cases}
\end{align*}
Then, it can be checked that there exists a positive constant $L$, which does not depend on $H$, such that 
\begin{itemize}
\item[\textbf{H1.}]$|\mu^H(x)| \vee |\sm^H(x)| \leq L(1+|x|^{l + \alpha+1})$ for any $x \in \mathbb{R}$;
\item[\textbf{H2.}]$x(\mu^H(x)-\mu(0)) \le L x^2$ for any $x \in \mathbb{R}$;
\item[\textbf{H3.}]$|\mu^H(x)| \le L$ if $|x|>H.$ 
\end{itemize}
Since $\inf_{x \in \mathbb R}|h(x)| > 0$,   $t^H_k\uparrow \infty$ as $k \uparrow \infty.$  
We now define $\underline{t}^H:= \max\{t^H_k: t^H_k \leq t\}.$ Note that $\underline{t}^H $ is a stopping time. 
Let \textcolor{black}{$\widehat{X}^H=(\widehat{X}^H_{t})_{t \geq 0}$} define as 
\begin{align*}
\widehat{X}^H_{t}= x_0+ \int_0^t \mu^H(\widehat{X}^H_{\underline{s}^H})ds+\int_0^t \left(\sm^H(\widehat{X}^H_{\underline{s}^H})+q_{\Delta}(\widehat{X}^H_{\underline{s}^H}) \left(W_s-W_{\underline{s}^H}\right)\right)dW_s. 
\end{align*}
Then, applying It\^o's formula to the function $\phi_{\delta \epsilon}(\widehat{X}^H_{t}),$ we have that for any $t \geq 0$,
\begin{align*}
\phi_{\delta \epsilon}(\widehat{X}^H_{t})
&=\phi_{\delta \epsilon}(x_0)+ \int_0^t \phi'_{\delta \epsilon}(\widehat{X}^H_{s})\mu^H(\widehat{X}^H_{\underline{s}^H})ds+\dfrac{1}{2}\int_0^t \phi''_{\delta \epsilon}(\widehat{X}^H_{s})\left(\sm^H(\widehat{X}^H_{\underline{s}^H})+q_{\Delta}(\widehat{X}^H_{\underline{s}^H}) \left(W_s-W_{\underline{s}^H}\right)\right)^2 ds + M_t,
\end{align*}
where $M_t$ is the stochastic integral part defined by 
$M_t = \int_0^t \phi'_{\delta \epsilon}(\widehat{X}^H_{s})\left(\sm^H(\widehat{X}^H_{\underline{s}^H})+q_{\Delta}(\widehat{X}^H_{\underline{s}^H}) \left(W_s-W_{\underline{s}^H}\right)\right) dW_s$. Then, using the inequality $(a+b)^2 \leq 2(a^2 + b^2)$, we have 
\begin{align} \label{ie1}
\phi_{\delta \epsilon}(\widehat{X}^H_{t})
&\leq  \phi_{\delta \epsilon}(x_0)+ \int_0^t \left( \phi'_{\delta \epsilon}(\widehat{X}^H_{s})
- \phi'_{\delta \epsilon}(\widehat{X}^H_{\underline{s}^H})\right) \mu^H(\widehat{X}^H_{\underline{s}^H})ds \notag\\
&\qquad+
 \int_0^t \phi'_{\delta \epsilon}(\widehat{X}^H_{\underline{s}^H})\left(\mu^H(\widehat{X}^H_{\underline{s}^H}) - \mu^H(0)\right)ds + \int_0^t \phi'_{\delta \epsilon}(\widehat{X}^H_{\underline{s}^H})\mu^H(0)ds \notag \\
&\qquad+\int_0^t \phi''_{\delta \epsilon}(\widehat{X}^H_{s})(\sm^H(\widehat{X}^H_{\underline{s}^H}))^2ds+ \int _0^t \phi''_{\delta \epsilon}(\widehat{X}^H_{s})q_{\Delta}^2(\widehat{X}^H_{\underline{s}^H}) \left(W_s-W_{\underline{s}^H}\right)^2 ds + M_t.
\end{align}
It follows from \textbf{YW5} that 
\begin{align}\label{ie2}
\phi'_{\delta \epsilon}(\widehat{X}^H_{s})
- \phi'_{\delta \epsilon}(\widehat{X}^H_{\underline{s}^H}) \leq \dfrac{2\delta}{\epsilon \log \delta}|\widehat{X}^H_s- \widehat{X}^H_{\underline{s}^H}|.
\end{align}
From \textbf{YW1, YW2, H2}, we get that
\begin{align}\label{ie3}
\phi'_{\delta \epsilon}(\widehat{X}^H_{\underline{s}^H})\left(\mu^H(\widehat{X}^H_{\underline{s}^H}) - \mu^H(0)\right)&=\dfrac{\phi'_{\delta \epsilon}(|\widehat{X}^H_{\underline{s}^H}|)}{|\widehat{X}^H_{\underline{s}^H}|}\widehat{X}^H_{\underline{s}^H}\left(\mu^H(\widehat{X}^H_{\underline{s}^H}) - \mu^H(0)\right)  \leq L|\widehat{X}^H_{\underline{s}^H}| \leq L|\widehat{X}^H_{s}|+L|\widehat{X}^H_{s}-\widehat{X}^H_{\underline{s}^H}|.
\end{align}
It follows from \textbf{H1} that 
\begin{align*}
|\sm^H(\widehat{X}^H_{\underline{s}^H})|^2 &\leq L^2(1+|\widehat{X}^H_{\underline{s}^H}|^{l + \alpha+1})^2 \leq L^2 2^{2(l+\alpha+1)} ( 1+ |\widehat{X}^H_s|^{2(l+\alpha+1)} + |\widehat{X}^H_s- \widehat{X}^H_{\underline{s}^H}|^{2(l+\alpha+1)}).
\end{align*}
Using \textbf{YW5}, there exists a constant $C_1$ such that 
\begin{align}\label{ie4}
\phi''_{\delta \epsilon}(\widehat{X}^H_{s})(\sm^H(\widehat{X}^H_{\underline{s}^H}))^2
\leq C_1(1+|\widehat{X}^H_s-\widehat{X}^H_{\underline{s}^H}|^{2(l + \alpha+1)}).
\end{align}
Inserting   \eqref{ie2}, \eqref{ie3}, \eqref{ie4}  into \eqref{ie1} and using \eqref{C2}, \textbf{YW2, YW3, YW5}, we get that for any stopping time $\tau \leq T$,
\begin{align*}
\left|\widehat{X}^H_{t\wedge \tau} \right| &\leq \varepsilon+\phi_{\delta\varepsilon}(\widehat{X}^H_{t\wedge \tau})\\
&\leq \epsilon + \phi_{\delta \epsilon}(x_0)+ \dfrac{2\delta}{\epsilon \log \delta}\int_0^{t\wedge \tau}|\widehat{X}^H_s- \widehat{X}^H_{\underline{s}^H}||\mu^H(\widehat{X}^H_{\underline{s}^H})|ds +\int_0^{t\wedge \tau}\left(L|\widehat{X}^H_{s}|+L|\widehat{X}^H_{s}-\widehat{X}^H_{\underline{s}^H}|\right)ds
\notag\\
&\quad+C_1\int_0^{t\wedge \tau} \left(1+|\widehat{X}^H_s- \widehat{X}^H_{\underline{s}^H}|^{2(l + \alpha+1)}\right)ds+\dfrac{2\delta}{\epsilon \log \delta}\Big(\dfrac{L}{\sqrt\Delta}\Big)^2\int_0^{t\wedge \tau}\left(W_s-W_{\underline{s}^H}\right)^2 ds 
+|\mu(0)|(t\wedge \tau) + M_{t \wedge \tau}.
\end{align*}
This yields to
\begin{align}\label{ex^}
\bE\left[\left|\widehat{X}^H_{t\wedge \tau} \right|\right] \leq &\epsilon + |x_0|+L\int_0^t \bE\left[\left|\widehat{X}^H_{s\wedge \tau} \right|\right]ds + \dfrac{2\delta}{\epsilon \log \delta}\int_0^t \bE\left[|\widehat{X}^H_s- \widehat{X}^H_{\underline{s}^H}||\mu^H(\widehat{X}^H_{\underline{s}^H})|\right]ds +L\int_0^t \bE\left[|\widehat{X}^H_{s}-\widehat{X}^H_{\underline{s}^H}|\right]ds\notag\\
& + |\mu(0)|t +C_1 \int_0^t \left(1+\bE\left[|\widehat{X}^H_s- \widehat{X}^H_{\underline{s}^H}|^{2(l + \alpha+1)}\right]\right)ds
+\dfrac{2\delta}{\epsilon \log \delta}\Big(\dfrac{L}{\sqrt\Delta}\Big)^2\int_0^t \bE\left[\left(W_s-W_{\underline{s}^H}\right)^2\right] ds.
\end{align}
Now, we write
\begin{align}\label{diffx^}
\left|\widehat{X}^H_s- \widehat{X}^H_{\underline{s}^H}\right|=\left| \mu^H(\widehat{X}^H_{\underline{s}^H})(s- \underline{s}^H)+\sm^H(\widehat{X}^H_{\underline{s}^H})\left(W_s-W_{\underline{s}^H}\right)+\dfrac{1}{2}q_{\Delta}(\widehat{X}^H_{\underline{s}^H}) \left((W_s-W_{\underline{s}^H})^2-(s- \underline{s}^H)\right)\right|.
\end{align}
We will use the following moment property of the Brownian motion due to its strong Markov property.  For any $q>0$ and $s\geq 0$, 
\begin{align} \label{markov2} 
\mathbb{E}\left[\left(W_s-W_{\underline{s}^H}\right)^q\big| \mathcal{F}_{\underline{s}^H}\right]= \begin{cases} 0 & \text{ if } q \text{ is an odd integer}\textcolor{black}{,}\\  C_q(s-\underline{s}^H)^{q/2}& \text{ if } q \text{ is an even integer}, \end{cases}
\end{align} 
for some positive constant $C_q$. 
Thus, 
\begin{align}\label{bx^}
&\bE\left[|\widehat{X}^H_s- \widehat{X}^H_{\underline{s}^H}||\mu^H(\widehat{X}^H_{\underline{s}^H})|\right]=\bE\left[\bE\left[|\widehat{X}^H_s- \widehat{X}^H_{\underline{s}^H}||\mu^H(\widehat{X}^H_{\underline{s}^H})||\mf_{\underline{s}^H}\right]\right]\notag \\
&\leq \bE\left[(\mu^H(\widehat{X}^H_{\underline{s}^H}))^2(s- \underline{s}^H)\right]+\bE\left[|\mu^H(\widehat{X}^H_{\underline{s}^H})\sm^H(\widehat{X}^H_{\underline{s}^H})|\bE\left[|W_s-W_{\underline{s}^H}||\mf_{\underline{s}^H}\right]\right] \notag\\
&\qquad+\bE\left[\Big|\dfrac{1}{2}\mu^H(\widehat{X}^H_{\underline{s}^H})q_{\Delta}(\widehat{X}^H_{\underline{s}^H})\Big|\bE\left[ |(W_s-W_{\underline{s}^H})^2-(s- \underline{s}^H)||\mf_{\underline{s}^H}\right]\right] \notag\\
&\leq \bE[(\mu^H(\widehat{X}^H_{\underline{s}^H}))^2(s- \underline{s}^H)]+ \bE\left[|\mu^H(\widehat{X}^H_{\underline{s}^H})\sm^H(\widehat{X}^H_{\underline{s}^H})|(s-\underline{s}^H)^{1/2}\right]+\bE\left[\Big|\mu^H(\widehat{X}^H_{\underline{s}^H})q_{\Delta}(\widehat{X}^H_{\underline{s}^H})\Big|(s- \underline{s}^H)\right].
\end{align}
From \eqref{chooseh} and the definition of $\mu^H(x), \sm^H(x)$, there exists a constant $C_2$ such that for all $s \in [0,T],$
\begin{align} \label{dk}
 \max \left\{(\mu^H(\widehat{X}^H_{\underline{s}^H}))^2(s- \underline{s}^H),|\mu^H(\widehat{X}^H_{\underline{s}^H})\sm^H(\widehat{X}^H_{\underline{s}^H})|^2(s- \underline{s}^H), |\mu^H(\widehat{X}^H_{\underline{s}^H})(s- \underline{s}^H)|\right\} \leq C_2 \Delta .
 \end{align} 
 This, together with \eqref{C2} and  \eqref{bx^} implies that there exists a constant $C_3$ such that 
 \begin{align} \label{Eb^}
\sup_{s \in [0,T]} \bE\left[|\widehat{X}^H_s- \widehat{X}^H_{\underline{s}^H}||\mu^H(\widehat{X}^H_{\underline{s}^H})|\right] \leq C_3 \Delta.
 \end{align}
Furthermore, from \eqref{diffx^} and \eqref{markov2}, we get that for any $q \geq 1$, there exists a constant $C_4(q)$ such that, for all $s \in [0,T]$,
\begin{align*}
&\bE\left[|\widehat{X}^H_s- \widehat{X}^H_{\underline{s}^H}|^q\right]=\bE\left[\bE\left[|\widehat{X}^H_s- \widehat{X}^H_{\underline{s}^H}|^q |\mf_{\underline{s}^H}\right]\right]\notag \\
& \leq 3^{q-1}\bigg(\bE\left[\left| \mu^H(\widehat{X}^H_{\underline{s}^H})(s- \underline{s}^H)\right|^q\right]+\bE\left[\left|\sm^H(\widehat{X}^H_{\underline{s}^H})\right|^q \bE\left[|W_s-W_{\underline{s}^H}|^q|\mf_{\underline{s}^H}\right]\right] \notag\\
&\qquad+\bE\left[\left|\dfrac{1}{2}q_{\Delta}(\widehat{X}^H_{\underline{s}^H})\right|^q \bE\left[\left|(W_s-W_{\underline{s}^H})^2-(s- \underline{s}^H)\right|^q|\mf_{\underline{s}^H}\right]\right]\bigg) \notag\\
&\leq C_4(q)\left(\bE\left[\left| \mu^H(\widehat{X}^H_{\underline{s}^H})(s- \underline{s}^H)\right|^q\right]+\bE\left[|\sm^H(\widehat{X}^H_{\underline{s}^H})|^q (s- \underline{s}^H)^{q/2}\right]+\bE\left[\left|q_{\Delta}(\widehat{X}^H_{\underline{s}^H})\right|^q (s- \underline{s}^H)^q\right]\right).
\end{align*}
This, together with \eqref{dk} and \eqref{C2}, implies that,  for any  $q \geq 1,$ there exists a constant $C_5(q)$ such that 
\begin{align}\label{ediffx^}
 \bE[|\widehat{X}^H_s- \widehat{X}^H_{\underline{s}^H}|^q] \leq C_5(q). 
\end{align}
Choosing $q=1$ and $q=2(l + \alpha+1)$ in \eqref{ediffx^}, plugging \eqref{Eb^} and \eqref{ediffx^} into \eqref{ex^}, we get, for any $t\in [0,T]$
 \begin{align*}
\bE\left[\left|\widehat{X}^H_{t\wedge \tau} \right|\right] \leq C_6 +L\int_0^{t }\bE\left[\left|\widehat{X}^H_{s\wedge \tau} \right|\right]ds,
\end{align*}
for some constant $C_6$.
Then, using Gronwall's inequality, we obtain that for any $t \leq T$ and  stopping time $\tau \leq T$,
$\bE\left[\left|\widehat{X}^H_{t\wedge \tau} \right|\right] \leq C_6 e^{Lt}.$
Choosing $t=T,$ we get that for any stopping time $\tau \leq T$,
$\bE\left[\left|\widehat{X}^H_{\tau} \right|\right] \leq C_6 e^{LT}.$
Thus, applying Proposition IV.4.7 in \cite{RY}, there exists a constant $C_7$ such that 
\begin{align}\label{Esup}
\sup_{H>0}\bE\left[\sup_{0 \leq t\leq T}\sqrt{\left|\widehat{X}^H_{t} \right|}\right] \leq C_7. 
\end{align}
Now, for any $T>0,$ we write
\begin{align*}
\p\left(t_k \leq T\right)=\p\left(t_k \leq T, \displaystyle\sup_{0 \leq t\leq T}\sqrt{\left|\widehat{X}^H_{t} \right|} \leq \sqrt{H}\right)+\p\left(t_k \leq T, \sup_{0 \leq t\leq T}\sqrt{\left|\widehat{X}^H_{t} \right|} > \sqrt{H}\right).
\end{align*}
Note that, on the set $\lbrace \sup_{0 \leq t \leq T}\sqrt{|\widehat{X}^H_{t} |} \leq \sqrt{H}\rbrace$ we have $ t^H_k=t_k$ if $t_k \leq T.$ Therefore, 
\begin{align*}
\limsup_{k \rightarrow \infty} \p\left(t_k \leq T, \displaystyle\sup_{0 \leq t\leq T}\sqrt{\left|\widehat{X}^H_{t} \right|} \leq \sqrt{H}\right) \leq \limsup_{k \rightarrow \infty} \p\left(t_k^H \leq T\right)=0.
\end{align*}
Moreover, using Markov's inequality and \eqref{Esup}, we obtain
\begin{align*}
\p\left(t_k \leq T, \sup_{0 \leq t\leq T}\sqrt{\left|\widehat{X}^H_{t} \right|} > \sqrt{H}\right) &\leq \p\left(\sup_{0 \leq t\leq T}\sqrt{\left|\widehat{X}^H_{t} \right|} > \sqrt{H}\right) \leq \dfrac{\bE\left[\sup_{0 \leq t\leq T}\sqrt{\left|\widehat{X}^H_{t} \right|}\right]}{\sqrt{H}} \leq \dfrac{C_7}{\sqrt{H}}.
\end{align*}
Consequently, we have shown that
$\limsup_{k\rightarrow \infty}\p(t_k \leq T) \leq \dfrac{C_7}{\sqrt{H}},$
for any $H>0.$ Letting $H \uparrow \infty,$ we get that $\displaystyle\limsup_{k\rightarrow \infty}\p(t_k \leq T)=0$ for any $T>0.$  This implies that $\displaystyle\lim_{k \rightarrow \infty}t_k=+\infty$ almost surely. Thus, the desired result follows.
\end{proof}

\subsection{Proof of Proposition \ref{mmX^}}\label{proof moment}

We employ the induction method to show \eqref{EXtmu2}. First, we show that \eqref{EXtmu2} holds for $k=1$. Indeed, applying  It\^o's formula to $e^{-2\gamma t}\widehat{X}_t^2$, we have 
	\begin{align*}
	e^{-2\gamma t}\widehat{X}_t^2 &
	=x_0^2+ 2\int_{0}^{t} e^{-2\gamma s}\left(-\gamma \widehat{X}_s^2 +  \widehat{X}_s \mu(\widehat{X}_{\underline{s}}) +\dfrac{1}{2}\left( \sigma(\widehat{X}_{\underline{s}})+ q_\Delta(\widehat{X}_{\underline{s}})(W_s- W_{\underline{s}})\right)^2\right) ds\notag \\
	& \hspace*{0.8cm}+ 2\int_{0}^{t} e^{-2\gamma s}\widehat{X}_s \left(\sm(\widehat{X}_{\underline{s}}) + q_\Delta(\widehat{X}_{\underline{s}})(W_s- W_{\underline{s}})\right)dW_s.  
	\end{align*}
Since \textbf{A1} holds for some  $p_0 \geq 2$, we have $-\gamma \widehat{X}_{\underline{s}}^2 +  \widehat{X}_ {\underline{s}}\mu(\widehat{X}_{\underline{s}}) +\dfrac{1}{2} \sigma^2(\widehat{X}_{\underline{s}}) \leq \eta$. Thus, 
 \begin{align*}
-\gamma \widehat{X}_s^2 &+  \widehat{X}_s \mu(\widehat{X}_{\underline{s}}) +\dfrac{1}{2}\left( \sigma(\widehat{X}_{\underline{s}})+ q_\Delta(\widehat{X}_{\underline{s}})(W_s- W_{\underline{s}})\right)^2 \\
\leq & \eta + |\gamma ||\widehat{X}_s^2- \widehat{X}_{\underline{s}}^2 |+ |\widehat{X}_s- \widehat{X}_ {\underline{s}}||\mu(\widehat{X}_{\underline{s}})| +\dfrac{1}{2} q^2_{\Delta}(\widehat{X}_{\underline{s}})(W_s- W_{\underline{s}})^2+ |\sigma(\widehat{X}_{\underline{s}})| |q_\Delta(\widehat{X}_{\underline{s}})||W_s- W_{\underline{s}}|.
 \end{align*}
For each $H>0,$ set $\tau_H:=\inf\{t \geq 0:  |\widehat{X}_t| \geq H\}.$ We have
	\begin{align}\label{e^Xtau}
	&e^{-2\gamma ( t \wedge \tau_H)}\widehat{X}_{t\wedge \tau_H}^2 
	 \leq x_0^2+ 2\eta\int_{0}^{t \wedge\tau_H} e^{-2\gamma s} ds + 2\int_{0}^{t \wedge \tau_H} e^{-2\gamma s}\widehat{X}_s \left(\sm(\widehat{X}_{\underline{s}}) + q_\Delta(\widehat{X}_{\underline{s}})(W_s- W_{\underline{s}})\right)dW_s\notag \\
	 &\quad+ 2\int_{0}^{t} e^{-2\gamma s}\left(|\gamma ||\widehat{X}_s^2- \widehat{X}_{\underline{s}}^2 |+ |\widehat{X}_s- \widehat{X}_ {\underline{s}}||\mu(\widehat{X}_{\underline{s}})| +\dfrac{1}{2} q^2_{\Delta}(\widehat{X}_{\underline{s}})(W_s- W_{\underline{s}})^2+ |\sigma(\widehat{X}_{\underline{s}})| |q_\Delta(\widehat{X}_{\underline{s}})||W_s- W_{\underline{s}}|\right) ds.
	\end{align}

Thanks to  Lemma \ref{Lem:-gmX^p1}, Lemma \ref{Lem:X^p-1b1}, Lemma \ref{Lem:$X^pq^21$} with $p=2$, there exists a constant $C_0$,  which does not depend on $\Delta$, such that 
   \begin{align}\label{ep}
	\sup_{s>0} \bE\left[|\gamma ||\widehat{X}_s^2- \widehat{X}_{\underline{s}}^2 |+ |\widehat{X}_s- \widehat{X}_ {\underline{s}}||\mu(\widehat{X}_{\underline{s}})| +\dfrac{1}{2} q^2_{\Delta}(\widehat{X}_{\underline{s}})(W_s- W_{\underline{s}})^2+ |\sigma(\widehat{X}_{\underline{s}})| |q_\Delta(\widehat{X}_{\underline{s}})||W_s- W_{\underline{s}}|\right] \leq C_0.
	\end{align}

Taking expectation in \eqref{e^Xtau} and using \eqref{ep}, we get that
	\begin{align*}
	\mathbb{E}\left[e^{-2\gamma (t\wedge \tau_H)}\widehat{X}_{t \wedge \tau_H}^2\right]&
	\leq x_0^2+2(\eta+C_0) \int_0^{t} e^{-2\gamma s}ds.
	\end{align*}
Thus,
$\p(\tau_H < t) \leq \left(x_0^2+2(\eta+C_0) \int_0^{t} e^{-2\gamma s}ds\right)H^{-2},$
	which deduces that $\tau_H \uparrow \infty $ almost surely as $H \uparrow \infty.$ Then, letting $H \uparrow \infty$ and applying Fatou's Lemma,  we obtain
	\begin{align*}
	\mathbb{E}\left[e^{-2\gamma t}\widehat{X}_{t}^2\right]&
	\leq x_0^2+2(\eta+C_0) \int_0^{t} e^{-2\gamma s}ds.
	\end{align*}
	This yields
	\begin{align} \label{p=2,1}
	\bE \left[\widehat{X}_{t}^2\right] \le \begin{cases}  \left(x_{0}^{2}+\dfrac{\eta+C_0}{\gamma}\right) e^{2\gamma t}-\dfrac{\eta+C_0}{\gamma} & \text{ if \;\;} \gamma \not =0,\\ 
	x_{0}^{2}+2\left(\eta+C_0\right) t & \text{ if \;\;} \gamma=0.
	\end{cases} 
	\end{align}		
Consequently, from \eqref{p=2,1} and \eqref{qh1} of Lemma 5.5, we conclude  that \eqref{EXtmu2} holds for $k=1$.

	Next, suppose that  the estimate \eqref{EXtmu2} holds for any $k \leq k_0 \leq \textcolor{black}{\lfloor p_0/2 \rfloor}-1$, we wish to show that \eqref{EXtmu2} still holds for $k=k_0+1$. For this,  applying It\^o's formula for $e^{-p\gamma t}\widehat{X}^p_t$ with even integer $p=2(k_0+1)$, we have
	\begin{align}
	e^{-p\gamma t}\left|\widehat{X}_t\right|^p 
	&=x_0^p+ \int_{0}^{t} e^{-p\gamma s}\left[-p\gamma \widehat{X}_s ^p +  p\widehat{X}_s^{p-1} \mu(\widehat{X}_{\underline{s}}) + \dfrac{p(p-1)}{2} \widehat{X}_s^{p-2}\left(\sm (\widehat{X}_{\underline{s}}) +q_\Delta(\widehat{X}_{\underline{s}})(W_s- W_{\underline{s}})\right)^2 \right]d s + M_t,
 \end{align}
 where $M_t=  p\int_{0}^{t} e^{-p\gamma s} \widehat{X}_s^{p-1} \left(\sm(\widehat{X}_{\underline{s}})+q_\Delta(\widehat{X}_{\underline{s}})(W_s- W_{\underline{s}})\right) dW_s.$ 
Similar to the case $k=1$, using assumption \textbf{A1}, we get 
	\begin{align}\label{e^pX^p1}
	e^{-p\gamma (t \wedge \tau_H)}\left|\widehat{X}_{t\wedge \tau_H}\right|^p 
	& \leq x_0^p+  p\eta \int_{0}^{t\wedge \tau_H}e^{-p\gamma s}\widehat{X}_{\underline{s}}^{p-2}d s \notag \\
	&\qquad+p\int_{0}^{t} e^{-p\gamma s}\bigg[|\gamma| |\widehat{X}_s^p-\widehat{X}_{\underline{s}} ^p| +  |\widehat{X}_s^{p-1}-\widehat{X}_{\underline{s}}^{p-1}| |\mu(\widehat{X}_{\underline{s}})|+ \dfrac{p-1}{2} |\widehat{X}_{s}^{p-2}-\widehat{X}_{\underline{s}}^{p-2}|\sm^2 (\widehat{X}_{\underline{s}}) \notag \\
	&\qquad+ \dfrac{p-1}{2} |\widehat{X}_{s}|^{p-2}\left(2|\sm (\widehat{X}_{\underline{s}})| |q_\Delta(\widehat{X}_{\underline{s}})(W_s- W_{\underline{s}})|+ q^2_\Delta(\widehat{X}_{\underline{s}})(W_s- W_{\underline{s}}) ^2 \right)\bigg]d s +  M_{t \wedge \tau_H}. 
	\end{align}
Combining Lemma \ref{Lem:-gmX^p1}, Lemma \ref{Lem:X^p-1b1}, Lemma \ref{Lem:X^p-2sm1} and Lemma \ref{Lem:$X^pq^21$}, we get that
	\begin{align*}
	&\bE \left[|\gamma| |\widehat{X}_s^p-\widehat{X}_{\underline{s}} ^p| +  |\widehat{X}_s^{p-1}-\widehat{X}_{\underline{s}}^{p-1}| |\mu(\widehat{X}_{\underline{s}})|+ \dfrac{p-1}{2} |\widehat{X}_{s}^{p-2}-\widehat{X}_{\underline{s}}^{p-2}|\sm^2 (\widehat{X}_{\underline{s}}) \right. \\
	&\qquad\left.+ \dfrac{p-1}{2} |\widehat{X}_{s}|^{p-2}\left(2|\sm (\widehat{X}_{\underline{s}})| |q_\Delta(\widehat{X}_{\underline{s}})(W_s- W_{\underline{s}})|+ q^2_\Delta(\widehat{X}_{\underline{s}})(W_s- W_{\underline{s}}) ^2 \right) \Big|\mathcal{F}_{\underline{s}} \right] \notag \\
	&\le C(p, \gamma)\sum\limits_{i=0}^{p-2} |\widehat{X}_{\underline{s}}|^i.
	\end{align*}
This, combined with \eqref{e^pX^p1}, deduces that
	\begin{align*}
	\bE\left[e^{-p\gamma (t \wedge \tau_H)}\left|\widehat{X}_{t\wedge \tau_H}\right|^p\right] 
&\leq x_0^p+ p\eta\bE\left[\int_{0}^{t\wedge \tau_H}  e^{-p\gamma s}\widehat{X}_{\underline{s}}^{p-2}d s \right]+ C(p, \gamma) \bE\left[\int_{0}^{t} e^{-p\gamma s}\sum\limits_{i=0}^{p-2} |\widehat{X}_{\underline{s}}|^i d s\right] \notag \\
	&\qquad + p\bE\left[\int_{0}^{t\wedge \tau_H} e^{-p\gamma s} \widehat{X}_s^{p-1} \left(\sm(\widehat{X}_{\underline{s}})+q_\Delta(\widehat{X}_{\underline{s}})(W_s- W_{\underline{s}})\right) dW_s\right].
	\end{align*}
Using the same argument as in the case $p=2$ by letting $H \uparrow \infty$ and  applying Fatou's Lemma,  we obtain
\begin{align}\label{tag421}
&\bE\left[e^{-p\gamma t}\left|\widehat{X}_{t}\right|^p\right]\leq  
	x_0^p+C(p,\eta, \gamma) \int_{0}^{t} e^{-p\gamma s} \sum_{i=0}^{p-2}\mathbb{E}\left[ \left|\widehat{X}_{\underline{s}}\right|^{i}\right]d s.
\end{align}
	 Then, from the estimates  \eqref{qh1} of Lemma 5.5,  \eqref{tag421}  and the inductive assumption, we conclude that \eqref{EXtmu2} holds for $k=k_0+1$. Thus,  the desired result follows.	

\subsection{Proof of Proposition \ref{ulN}}
It follows from \eqref{chooseh} that there is a positive constant $C$ such that 
$h(x)^{-1} \leq C(1 + |x|^{4(l + \alpha +1)} + |x|^{2l_0}). $
Thus, 
$$
N(T) \leq \int_0^T \frac{1}{h (\widehat{X}_{\underline{t}})\Delta} \mathrm{d} t+1 \leq \frac{2C}{\Delta} \int_0^T  (1+ |\widehat{X}_{\underline{s}}|^{2l_0} +  |\widehat{X}_{\underline{s}}|^{4(l+\alpha+1)})ds + 1.$$
Therefore, 
$$\mathbb{E}\left[\left|N(T)-1\right|\right] \leq \frac{2CT}{\Delta} \sup_{0\leq s \leq T}\bE\left[ 1+ |\widehat{X}_{\underline{s}}|^{2l_0} + |\widehat{X}_{\underline{s}}|^{4(l + \alpha+1)}\right].$$

\subsection{Proof of Theorem \ref{main Thr }} \label{proof3}
We write 
\begin{align*}
X_t- \widehat{X}_t= \int_0^t(\mu(X_s)- \mu(\widehat{X}_{\underline{s}})) ds+ \int_0^t\left(\sm(X_s)- \sm(\widehat{X}_{\underline{s}})-q_{\Delta}(\widehat{X}_{\underline{s}}) (W_s- W_{\underline{s}})\right)dW_s.
\end{align*}
	For any  $a \in \mathbb{R},$ applying It\^o's formula to $e^{-at}|X_t- \widehat{X}_t|^2,$ we have
	\begin{align}\label{ItoXX^}	
	&e^{-at}|X_t- \widehat{X}_t|^2 = \int_0^t e^{-as} g_s ds +\int_0^t 2e^{-as}(X_s- \widehat{X}_s)\left(\sm(X_s)- \sm(\widehat{X}_{\underline{s}})-q_{\Delta}(\widehat{X}_{\underline{s}})(W_s- W_{\underline{s}})\right)dW_s,
	\end{align}
	where  $g_s :=  -a	|X_s- \widehat{X}_s|^2+2(X_s- \widehat{X}_s)(\mu(X_s)- \mu(\widehat{X}_{\underline{s}}))+\left(\sm(X_s)- \sm(\widehat{X}_{\underline{s}})-q_{\Delta}(\widehat{X}_{\underline{s}}) (W_s- W_{\underline{s}})\right)^2 $.
	First, observe that
	\begin{align} \label{diffxmu}
	&(X_s- \widehat{X}_s)\left(\mu(X_s)- \mu(\widehat{X}_{\underline{s}})\right) \notag \\
	&= (X_s- \widehat{X}_s)\left(\mu(X_s)- \mu(\widehat{X}_s)\right)+(X_s- \widehat{X}_s)\left(\mu(\widehat{X}_s)- \mu(\widehat{X}_{\underline{s}})- \mu'(\widehat{X}_{\underline{s}})(\widehat{X}_s- \widehat{X}_{\underline{s}})\right)\notag\\
	&\quad+(X_s- \widehat{X}_s)\mu'(\widehat{X}_{\underline{s}})\left(\widehat{X}_s- \widehat{X}_{\underline{s}}-\sm(\widehat{X}_{\underline{s}})(W_s- W_{\underline{s}})\right)+(X_s- \widehat{X}_s-X_{\underline{s}}+\widehat{X}_{\underline{s}})\mu'(\widehat{X}_{\underline{s}})\sm(\widehat{X}_{\underline{s}})(W_s- W_{\underline{s}}) \notag \\
	&\quad+(X_{\underline{s}}-\widehat{X}_{\underline{s}})\mu'(\widehat{X}_{\underline{s}})\sm(\widehat{X}_{\underline{s}})(W_s- W_{\underline{s}}).
	\end{align}
	For any $\epsilon>0,$ we have
	\begin{align}
	&(X_s- \widehat{X}_s)\left(\mu(\widehat{X}_s)- \mu(\widehat{X}_{\underline{s}})- \mu'(\widehat{X}_{\underline{s}})(\widehat{X}_s- \widehat{X}_{\underline{s}})\right) +(X_s- \widehat{X}_s)\mu'(\widehat{X}_{\underline{s}})\left(\widehat{X}_s- \widehat{X}_{\underline{s}}-\sm(\widehat{X}_{\underline{s}})(W_s- W_{\underline{s}})\right) \notag\\ 
	&\leq \epsilon |X_s- \widehat{X}_s|^2
	+\dfrac{1}{2\epsilon}\left|\mu(\widehat{X}_s)- \mu(\widehat{X}_{\underline{s}})- \mu'(\widehat{X}_{\underline{s}})(\widehat{X}_s- \widehat{X}_{\underline{s}})\right|^2 
	+\dfrac{1}{2\epsilon}|\widehat{X}_s- \widehat{X}_{\underline{s}}-\sm(\widehat{X}_{\underline{s}})(W_s- W_{\underline{s}})|^2|\mu'(\widehat{X}_{\underline{s}})|^2. \label{eqn:mprof1}
	\end{align}
	Using \eqref{C^1+alpha} and the estimate $|\widehat{X}_s|^l  \leq 2^{l-1}(|\widehat{X}_s - \widehat{X}_{\underline{s}}|^l  + |\widehat{X}_{\underline{s}}|^l)$, we get  
	\begin{align*}
	\left|\mu(\widehat{X}_s)- \mu(\widehat{X}_{\underline{s}})- \mu'(\widehat{X}_{\underline{s}})(\widehat{X}_s- \widehat{X}_{\underline{s}})\right|^2& \leq C_1 (1+|\widehat{X}_s|^{l}+|\widehat{X}_{\underline{s}}|^{l})^2|\widehat{X}_s - \widehat{X}_{\underline{s}}|^{2(1+\alpha)}\\
	& \leq C_2 (1 + |\widehat{X}_{\underline{s}}|^{2l})|\widehat{X}_s - \widehat{X}_{\underline{s}}|^{2(1+\alpha)} + C_2|\widehat{X}_s - \widehat{X}_{\underline{s}}|^{2(1+\alpha + l)}.
	\end{align*}
	Applying Lemma \ref{lm5},  we have
	\begin{align}\label{E1}
	\bE\left[\left|\mu(\widehat{X}_s)- \mu(\widehat{X}_{\underline{s}})- \mu'(\widehat{X}_{\underline{s}})(\widehat{X}_s- \widehat{X}_{\underline{s}})\right|^2 |\mf_{\underline{s}}\right] \leq C_3 \left(1+|\widehat{X}_{\underline{s}}|^{2l}\right)  \Delta^{1+\alpha} + 	C_3  \Delta^{1+\alpha+ l}. 	
	\end{align}
	Next, using  Lemma \ref{lm8},  estimate  \eqref{U2} and the fact that $|\mu'(\widehat{X}_{\underline{s}})\sm(\widehat{X}_{\underline{s}})|(s-\underline{s}) \leq h_0 \Delta,$ we get that, for any $\epsilon>0,$ 
	\begin{align} \label{E2}
	&\left| \bE\left[(X_s- \widehat{X}_s-X_{\underline{s}}+\widehat{X}_{\underline{s}})\mu'(\widehat{X}_{\underline{s}})\sm(\widehat{X}_{\underline{s}})(W_s- W_{\underline{s}})|\mf_{\underline{s}}\right] \right | \notag\\
	&=   |\mu'(\widehat{X}_{\underline{s}})\sm(\widehat{X}_{\underline{s}})| \left|\bE\left[(X_s- \widehat{X}_s-X_{\underline{s}}+\widehat{X}_{\underline{s}})(W_s- W_{\underline{s}})|\mf_{\underline{s}}\right]\right| \notag\\
	& \leq |\mu'(\widehat{X}_{\underline{s}})\sm(\widehat{X}_{\underline{s}})|\left(C(s- \underline{s})^{\frac{3+\alpha}{2}}(1+|X_{\underline{s}}|^{l + (1+\alpha)(l + \alpha + 1)}) + |\sm(X_{\underline{s}})- \sm(\widehat{X}_{\underline{s}})|(s-\underline{s})\right) \notag\\
	& \leq C_4 \Delta^{\frac{3+\alpha}{2}}\left(1+|X_{\underline{s}}|^{l + (1+\alpha)(l + \alpha + 1)}\right)+ \dfrac{L}{4\epsilon}\Delta^2\left(1+|X_{\underline{s}}|^{l+ \alpha}+|\widehat{X}_{\underline{s}}|^ {l+ \alpha}\right)^2  + \epsilon|X_{\underline{s}}- \widehat{X}_{\underline{s}}|^2.
	\end{align}
Applying Lemma \ref{lm5}, we get
\begin{align} \label{E3}
\bE\left[|\widehat{X}_s- \widehat{X}_{\underline{s}}-\sm(\widehat{X}_{\underline{s}})(W_s- W_{\underline{s}})|^2|\mf_{\underline{s}}\right] \leq C_5 \Delta^2.
\end{align}
Substituting \eqref{eqn:mprof1}, \eqref{E1}, \eqref{E2}, \eqref{E3} into \eqref{diffxmu} and then applying estimate  \eqref{U6}, we obtain that for any $\epsilon>0$, 
\begin{align} \label{EbX}
&\bE\left[(X_s- \widehat{X}_s)(\mu(X_s)- \mu(\widehat{X}_{\underline{s}}))|\mf_{\underline{s}}\right] \notag\\
& \leq \bE\left[ (X_s- \widehat{X}_s)(\mu(X_s)- \mu(\widehat{X}_s))+ \epsilon|X_s- \widehat{X}_s|^2|\mf_{\underline{s}}\right]
+\dfrac{C_3}{2\epsilon} \Delta^{1+\alpha}(1+|\widehat{X}_{\underline{s}}|^{2l}) + \frac{C_3}{2\epsilon} \Delta^{1 + \alpha + l} \notag\\
&\quad +C_4 \Delta^{\frac{3+\alpha}{2}}(1+|X_{\underline{s}}|^{l + (1+\alpha)(l +\alpha + 1)})
+ \dfrac{L}{4\epsilon}(1+|X_{\underline{s}}|^{l + \alpha}+|\widehat{X}_{\underline{s}}|^{l + \alpha})^2 \Delta^2 + \epsilon|X_{\underline{s}}- \widehat{X}_{\underline{s}}|^2 
+\dfrac{C_5 L^2}{2\epsilon}\left(1+|\widehat{X}_{\underline{s}}|^{l+\alpha}\right)^2\Delta^2 \notag\\
&\leq \bE\left[ (X_s- \widehat{X}_s)(\mu(X_s)- \mu(\widehat{X}_s))+ \epsilon|X_s- \widehat{X}_s|^2|\mf_{\underline{s}}\right]+ \epsilon|X_{\underline{s}}- \widehat{X}_{\underline{s}}|^2 \notag\\
&\quad+C_6  \Delta^{1+\alpha}\left(1+|\widehat{X}_{\underline{s}}|^{2l+2\alpha}+ |X_{\underline{s}}|^{l + (1+\alpha)(1 +\alpha + l)}\right).
\end{align}
Second, observe that
\begin{align*}
&\left(\sm(X_s)-\sm(\widehat{X}_{\underline{s}})- q_{\Delta}(\widehat{X}_{\underline{s}})(W_s- W_{\underline{s}})\right)^2\\
&=\left\vert\sm(X_s)-\sm(\widehat{X}_s)+\sm(\widehat{X}_s)-\sm(\widehat{X}_{\underline{s}})- q_{\Delta}(\widehat{X}_{\underline{s}})(W_s- W_{\underline{s}})\right\vert^2  \\
&=|\sm(X_s)-\sm(\widehat{X}_s)|^2+2\left(\sm(X_s)-\sm(\widehat{X}_s)\right)\left(\sm(\widehat{X}_s)-\sm(\widehat{X}_{\underline{s}})- q_{\Delta}(\widehat{X}_{\underline{s}})(W_s- W_{\underline{s}})\right)\\
&\quad+ |\sm(\widehat{X}_s)-\sm(\widehat{X}_{\underline{s}})- q_{\Delta}(\widehat{X}_{\underline{s}})(W_s- W_{\underline{s}})|^2.
\end{align*}
Using  \eqref{U2}, for any $\epsilon>0,$ we have
\begin{align*}
&2\left(\sm(X_s)-\sm(\widehat{X}_s)\right)\left(\sm(\widehat{X}_s)-\sm(\widehat{X}_{\underline{s}})- q_{\Delta}(\widehat{X}_{\underline{s}})(W_s- W_{\underline{s}})\right)  \notag\\
&\leq 2L(1+|X_s|^{l + \alpha}+|\widehat{X}_s|^{l + \alpha})|X_s- \widehat{X}_s|\left|\sm(\widehat{X}_s)-\sm(\widehat{X}_{\underline{s}})- q_{\Delta}(\widehat{X}_{\underline{s}})(W_s- W_{\underline{s}})\right| \notag\\
&\leq \epsilon |X_s- \widehat{X}_s|^2+\dfrac{L^2}{\epsilon}(1+|X_s|^{l + \alpha}+|\widehat{X}_s|^{l + \alpha})^2\left(\sm(\widehat{X}_s)-\sm(\widehat{X}_{\underline{s}})- q_{\Delta}(\widehat{X}_{\underline{s}})(W_s- W_{\underline{s}})\right)^2 \notag\\
&\leq \epsilon |X_s- \widehat{X}_s|^2+ \dfrac{L^4}{4\epsilon^2}\Delta^{1+\alpha}\left(1+|X_s|^{l + \alpha}+|\widehat{X}_s|^{l + \alpha}\right)^4+ \Delta^{-(1+\alpha)}\left(\sm(\widehat{X}_s)-\sm(\widehat{X}_{\underline{s}})- q_{\Delta}(\widehat{X}_{\underline{s}})(W_s- W_{\underline{s}})\right)^4.
\end{align*}
Thus, applying  Lemma \ref{lm7}, we get that for any $l_0\geq \frac{4l}{3(1+\alpha)}$,
\begin{align}\label{Esq}
&\bE\left[|\sm(X_s)-\sm(\widehat{X}_{\underline{s}})- q_{\Delta}(\widehat{X}_{\underline{s}})(W_s- W_{\underline{s}})|^2 |\mf_{\underline{s}}\right] \notag\\
&\leq \bE\left[|\sm(X_s)-\sm(\widehat{X}_s)|^2+ \epsilon |X_s- \widehat{X}_s|^2|\mf_{\underline{s}}\right]+ \dfrac{L^4}{4\epsilon^2}\Delta^{1+\alpha}\bE\left[(1+|X_s|^{l + \alpha}+|\widehat{X}_s|^{l + \alpha})^4|\mf_{\underline{s}}\right] \notag\\
&\quad+ \Delta^{-(1+\alpha)}\bE\left[\left(\sm(\widehat{X}_s)-\sm(\widehat{X}_{\underline{s}})- q_{\Delta}(\widehat{X}_{\underline{s}})(W_s- W_{\underline{s}})\right)^4|\mf_{\underline{s}}\right] \notag\\
&\quad +\bE\left[ |\sm(\widehat{X}_s)-\sm(\widehat{X}_{\underline{s}})- q_{\Delta}(\widehat{X}_{\underline{s}})(W_s- W_{\underline{s}})|^2|\mf_{\underline{s}}\right] \notag\\
&\leq \bE\left[|\sm(X_s)-\sm(\widehat{X}_s)|^2+ \epsilon |X_s- \widehat{X}_s|^2|\mf_{\underline{s}}\right]+  \dfrac{27L^4}{4\epsilon^2}\Delta^{1+\alpha}\bE\left[1+|X_s|^{4(l+\alpha)}+|\widehat{X}_s|^{4(l+\alpha)}|\mf_{\underline{s}}\right]+C_7 \Delta^{1+\alpha}.
\end{align}
Consequently, using \eqref{ItoXX^}, \eqref{EbX}, \eqref{Esq},  for any $p_0 \geq 4(l + \alpha+1)$, $\epsilon 
>0$ and  $l_0\geq \frac{4l}{3(1+\alpha)}$, we deduce that
\begin{align}
\bE\left[ g_s |\mf_{\underline{s}} \right]  &\leq \bE\left[ -a	|X_s- \widehat{X}_s|^2+2(X_s- \widehat{X}_s)(\mu(X_s)- \mu(\widehat{X}_s))+|\sm(X_s)-\sm(\widehat{X}_s)|^2+3 \epsilon |X_s- \widehat{X}_s|^2|\mf_{\underline{s}}\right] \notag\\
&\quad +2\epsilon|X_{\underline{s}}- \widehat{X}_{\underline{s}}|^2 +\dfrac{27L^4}{4\epsilon^2}\Delta^{1+\alpha}\bE\left[1+|X_s|^{4(l + \alpha)}+|\widehat{X}_s|^{4(l + \alpha)}|\mf_{\underline{s}}\right]  \notag\\
&\quad+C_6  \Delta^{1+\alpha}\left(1+|\widehat{X}_{\underline{s}}|^{2l+2\alpha}+ |X_{\underline{s}}|^{l + (1+\alpha)(1 +\alpha + l)}\right) + C_7\Delta^{1+\alpha}. 
\notag
\end{align}
Thanks to \textbf{A2}, we have 
\begin{align}
\bE\left[ g_s |\mf_{\underline{s}} \right] &\leq  (-a+2\lambda+3\epsilon)\bE\left[|X_s- \widehat{X}_s|^2|\mf_{\underline{s}} \right]+ 2\epsilon|X_{\underline{s}}- \widehat{X}_{\underline{s}}|^2 + \dfrac{27L^4}{4\epsilon^2}\Delta^{1+\alpha}\bE\left[1+|X_s|^{4(l + \alpha)}+|\widehat{X}_s|^{4(l + \alpha)}|\mf_{\underline{s}}\right] \notag\\
 &\quad+C_6  \Delta^{1+\alpha}\left(1+|\widehat{X}_{\underline{s}}|^{2l+2\alpha}+ |X_{\underline{s}}|^{l + (1+\alpha)(1 +\alpha + l)}\right) + C_7 \Delta^{1+\alpha}. \notag
 \end{align}
 Using Lemma \ref{lm9}, we have 
 \begin{align}
\bE\left[ g_s |\mf_{\underline{s}} \right]  &\leq (-a+2\lambda+6\epsilon)	|X_{\underline{s}}- \widehat{X}_{\underline{s}}|^2+(-a+2\lambda+4\epsilon)(s-\underline{s})|\sm(X_{\underline{s}})- \sm(\widehat{X}_{\underline{s}})|^2+ \notag\\
 &\quad+C(a, \epsilon, \lambda) \Delta^2\left(1+|X_{\underline{s}}|^{4(l + \alpha+1)} \right)+ \dfrac{27L^4}{4\epsilon^2}\Delta^{1+\alpha}\bE\left[1+|X_s|^{4(l + \alpha)}+|\widehat{X}_s|^{4(l + \alpha)}|\mf_{\underline{s}}\right] \notag\\
 &\quad+C_6  \Delta^{1+\alpha}\left(1+|\widehat{X}_{\underline{s}}|^{2l+2\alpha}+ |X_{\underline{s}}|^{l + (1+\alpha)(1 +\alpha + l)}\right) + C_7 \Delta^{1+\alpha}. \notag\
\end{align}
Hence, choosing $a=2\lambda+ 6\epsilon$ which implies that $ -a+2\lambda+4\epsilon=-2\epsilon <0$, we get that
\begin{align}
&\bE\left[ g_s|\mf_{\underline{s}} \right] \leq C_8  \Delta^2\left(1+|X_{\underline{s}}|^{4(l + \alpha+1)} \right)+ \dfrac{27L^4}{4\epsilon^2}\Delta^{1+\alpha}\bE\left[1+|X_s|^{4(l + \alpha)}+|\widehat{X}_s|^{4(l + \alpha)}|\mf_{\underline{s}}\right] \notag\\
 &\quad+C_6  \Delta^{1+\alpha}\left(1+|\widehat{X}_{\underline{s}}|^{2l+2\alpha}+ |X_{\underline{s}}|^{l + (1+\alpha)(1 +\alpha + l)}\right) + C_7 \Delta^{1+\alpha}. \notag\
\end{align}
Note that $l + (1+\alpha)(1 +\alpha + l) < 4(l + \alpha +1)$. Consequently, taking expectation in \eqref{ItoXX^}	 and applying Proposition \ref{moment nghiem dung}, Proposition \ref{ulX_ut}, and Proposition \ref{mmX^},  we get that there exists a positive constant $C_T =C(x_0, \gamma, \eta, \alpha, l, l_0, \lambda, p_0, T)$ such that
\begin{align*}
\bE\left[e^{-(2\lambda+ 6\epsilon)t}|X_t- \widehat{X}_t|^2\right] \leq C_T\Delta^{1 + \alpha} \int_0^te^{-(2\lambda+ 6\epsilon)s}ds,
\end{align*}
which yields that
\begin{align*}
\sup_{0 \leq t\leq T}\bE\left[|X_t- \widehat{X}_t|^2\right] \leq C_T\Delta^{1 + \alpha}.
\end{align*}
This shows \eqref{r1}.

Finally, when $\lambda<0, \gamma <0,$ it suffices to choose $\epsilon = \frac{-\lambda}{4}$ which implies that $a=2\lambda+6\epsilon = \frac{\lambda}{2}<0.$ This, combined with Remark \ref{rm1} and Remark \ref{rm2}, deduces that there exists a positive constant $C =C(x_0, \gamma, \eta, \alpha, l, l_0, \lambda, p_0)$ which does not depend on $T$  such that
	\begin{equation} 
	\underset{t \ge 0}{\sup }\; \bE \left[  |\widehat{X}_t - X_t| ^2\right] \le C\Delta^{1+\alpha},
	\end{equation} 
	which proves \eqref{r2}. This concludes our proof.

\section{Numerical experiments} 
\label{sec:nume} 
\subsection{Empirical rates of convergence of the tamed-adaptive scheme} 
In this section, we perform an empirical study for the rate of convergence of the tamed-adaptive Milstein scheme in both short and long time period. We will consider two models given in Table \ref{tab:1}. The first model is the Ginzburg-Landau model. The second model is a SDE with low regularity coefficients. We set   $X_0=x_0= 0.1$ in all the cases. It is straightforward to verify that the drift and diffusion coefficients of these equations satisfy Assumptions \textbf{A0-A2} with constants $  p_0, l_0,  \gamma, \eta, \lambda, l, \alpha$ shown in Table \ref{tab:1b}.
\begin{center}
	\begin{table}[ht]
		\begin{center}
			\renewcommand{\arraystretch}{1.3} 
	\begin{tabular}{|>{\centering\arraybackslash}p{1.5cm}|p{4cm}|p{4cm}|}
				\hline
					&
				$\mu(x)$
				&
			$\sigma(x)$
			\\
			\hline
				Model 1
				&
				$
		0.1(x-x^3)
$
				&
				$
		0.1x
$
\\
\hline
				Model 2
				&
				$
		-0.1(1+3x+x|x|^{0.5})
$
				&
				$
		 0.3(1+|x|^{1.2})
$
\\
    \hline
			\end{tabular}
		\end{center}
		\caption{\small Examples of SDEs satisfying the conditions \textbf{A0-A2}.
			\label{tab:1}}
	\end{table}
\end{center} 

\begin{center}
	\begin{table}[!ht]
		\begin{center}
			\renewcommand{\arraystretch}{1.3} 
	\begin{tabular}{|>{\centering\arraybackslash}p{1.5cm}|p{1.5cm}|p{1.5cm}|p{1.5cm}|p{1.9cm}|p{1.5cm}|p{1.5cm}|p{1.5cm}|}
				\hline
								&
    $p_0$
            &
            $l_0$
            &
            $\gamma$
            &
            $\eta$
            &
            $\lambda$
            &
            $l$
            &
            $\alpha$
     		\\
				\hline
			Model	1
				&
            12
            &
            2
            &0.65&0&0.015&$1$&$1$
				\\
    \hline
    		Model		2
				&
            6
            &
            2
            &-0.2&$ 6 \times 10^{6}$&-0.2&$ 0.3 $&0.2
				\\
    \hline
    			\end{tabular}
		\end{center}
		\caption{\small The values of constants in  conditions \textbf{A0-A2}.
			\label{tab:1b}}
	\end{table}
\end{center}
In these examples,   $p_0 \geq 4(l+\alpha+1), l_0=2 \geq \max\{2, \frac{4l}{3(1+\alpha)}\},$ hence it follows from Theorem \ref{main Thr } that the tamed-adapted Milstein scheme defined by \eqref{Milstein1} and \eqref{chooseh} converges in $L_2$-norm with a  rate of order $(1+ \alpha)/2$ over any finite time intervals. Moreover, in Example 2, since $\gamma<0, \lambda<0, $ the tamed-adapted Milstein scheme converges in $L_2$-norm with a rate of order $(1+ \alpha)/2$
on the infinite time interval $(0, +\infty)$. 
There exists a positive constant $C$ such that for any $\Delta \in (0, 1)$, it holds that 
\begin{equation*} 
	\underset{0 \le t \le T}{\sup} \bE \left[  |\widehat{X}_t - X_t| ^2\right] \le 
	C \Delta^{1+\alpha}.
	\end{equation*}
Assume that  $\widehat{X}^{(k)}_t$ and $ \widehat{X}^{(k+1)}_t$ are two approximations of the exact solution $X_t$ defined by equation \eqref{Milstein1} with  $\Delta = 2^{-k}$ and $\Delta = 2^{-(k+1)},$ respectively, where $k \geq 1.$ 
By using the inequality $|a-b|^2 \leq 2(a^2+b^2)$ valid for any $a, b \in \mathbb{R},$ we have
\begin{equation*} 
	\underset{0 \le t \le T}{\sup} \bE \left[  |\widehat{X}^{(k)}_t - \widehat{X}^{(k+1)}_t| ^2\right] \le 
	C 2^{-k(1+\alpha)},
	\end{equation*}
	for some positive constant $C.$  We approximate $\bE \left[  |\widehat{X}^{(k)}_t - \widehat{X}^{(k+1)}_t| ^2\right]$ by 
\begin{align*}
\textrm{MSE}(k) = \dfrac{1}{N_k}\sum_{m=1}^{N_k}|\widehat{X}^{(k, m)}- \widehat{X}^{(k+1, m)}|^2,
\end{align*}
where $\widehat{X}^{(k, m)}$ with $m=1,\ldots, N_k$, are $N_k$ independent copies of $\widehat{X}^{(k)}$. Note that for each $m$ and $ k,$ $\widehat{X}^{(k, m)}, \widehat{X}^{(k+1, m)}$ are generated with respect to the same Brownian path
(see Algorithm 1 in \cite{I27}).  If 
$\textrm{MSE}(k) \approx  C' 2^{-k(1+\alpha')},$
for some positive constants $\alpha'$ and $C'$, then 
$\log_2 \textrm{MSE}(k) \approx  -k(1+\alpha')+ \log_2 C'$.
As a consequence,  we can write
\begin{align} \label{model:mse}
\log_2 \textrm{MSE} (k) = -k(1+\alpha')+ \log_2 C'+ o(1).
\end{align}
Hence, the empirical strong convergence rate, $\frac{1+\alpha'}{2}$, can be estimated as the slope coefficient in the linear regression model \eqref{model:mse}. In the subsequent simulation, we set $N_k = 10^5$ for $k = 1, \ldots, 6$ and $T = 5$. The results are presented in Figure \ref{fig1}, 
where we observe that the empirical convergence rates $\frac{1+\alpha'}{2}$ are slightly higher than the theoretical rate $\frac{1+\alpha}{2}$.

\begin{figure}[htp] 
    \centering    \includegraphics[width=0.4\linewidth]{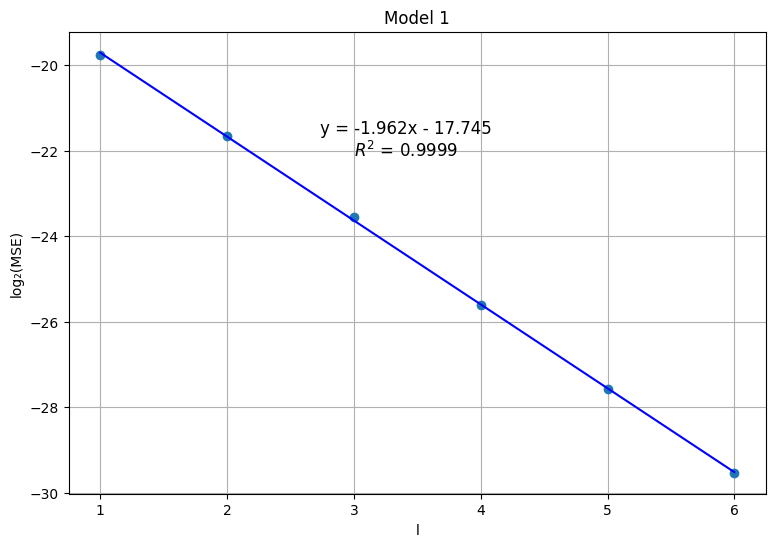}
    \includegraphics[width=0.4\linewidth]{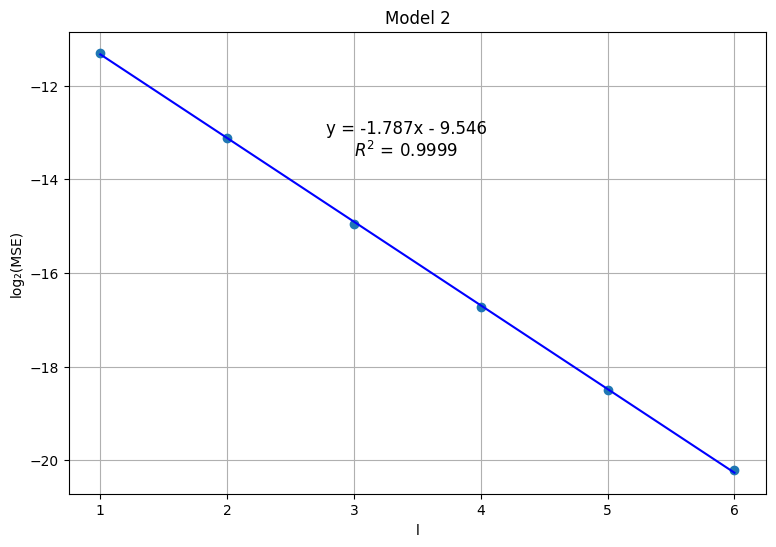}
    \caption{Values of $\log_2 \textrm{MSE}(k)$ for $T= 5$ and $1\leq k \leq 6$}
    \label{fig1}
\end{figure}

To compare the magnitude of errors with respect to $T$, Figure \ref{fig2} presents the values of  $ \log_2 \text{MSE}(k)$ for $ 1 \leq k \leq 6$ at $T = 1$, $T = 3$, $T = 5$, and $T = 10$. We observe that in Model 1, the values of $\log_2 \text{MSE}(k)$ increase significantly as $T $ grows from $1$ to $10$, while in Model 2, where we have both coefficients $\gamma$ and $\lambda$ negative, the increase in these values is much more gradual.

\begin{figure}[htp] 
    \centering    \includegraphics[width=0.4\linewidth]{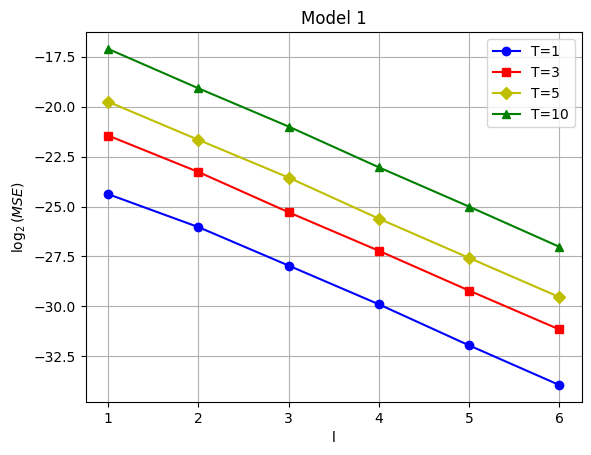}
    \includegraphics[width=0.4\linewidth]{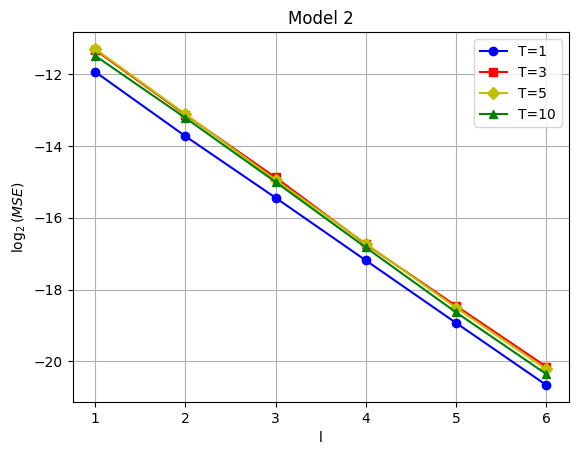}
    \caption{Values of $\log_2 \textrm{MSE}(k)$   for $T=1$, $T=3$, $T=5$ and $T= 10$, $1\leq k \leq 6$.} 
    \label{fig2}
\end{figure}

\subsection{A comparison with the tamed Milstein scheme} 
In \cite{I23}, the tamed Milstein scheme approximation $\tilde{X}$ of equation \eqref{eqn1} was defined as follows: For each fixed stepsize $\Delta \in (0,1)$, we set 
$\tilde{X}_0 = x_0$ and for any $k=0,1,\ldots$
\begin{equation} \label{def:TM}
\tilde{X}_{(k+1)\Delta} = \tilde{X}_{k\Delta} + \frac{b(\tilde{X}_{k\Delta})}{1 + \Delta |\tilde{X}_{k\Delta} |^{2}}\Delta + \frac{\sigma(\tilde{X}_{k\Delta})}{1 + \Delta |\tilde{X}_{k\Delta} |^{2}}(W_{(k+1)\Delta}-W_{k\Delta})+ \frac{1}{2} \frac{\sigma(\tilde{X}_{k\Delta})\sigma'(\tilde{X}_{k\Delta})}{1 + \Delta |\tilde{X}_{k\Delta} |^{2}}((W_{(k+1)\Delta}-W_{k\Delta})^2-\Delta).
\end{equation} 

In what follows, we compare the performance of the tamed-adaptive Milstein scheme (TAM) and the tamed Milstein scheme (TM) as the terminal time $T$ increases from 1 to 10. Due to the random nature of the timestep in the TAM scheme, we evaluate the accuracy of both methods by computing $\log _2 \mathrm{MSE}(k)$ and plotting it against $\log _2 N(T)$, where $N(T)$ denotes the average number of timesteps required to generate a single sample path.
Specifically, $N(T)$ is defined as
$$N(T) = \frac{1}{N_k} \sum_{m=1}^{N_k} N^k(m)$$ 
where $N_k$ is the total number of sample paths, and $N^k(m)$ is the number of timesteps used to simulate the $m$th sample path of the process $\left(X_t\right)_{0 \leq t \leq T}$ with $\Delta = 2^{-k}$.
For the TM scheme, which employs a fixed step size $\Delta$, the average number of timesteps is simply $N(T)=T / \Delta$.

\begin{figure}
    \centering
    \includegraphics[width=0.2\linewidth]{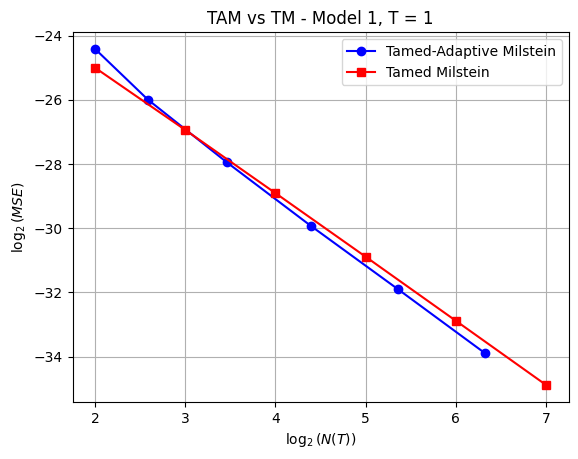}
    \includegraphics[width=0.2\linewidth]{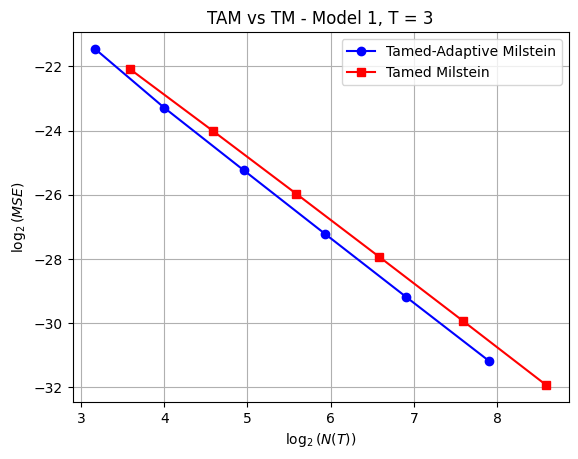}
    \includegraphics[width=0.2\linewidth]{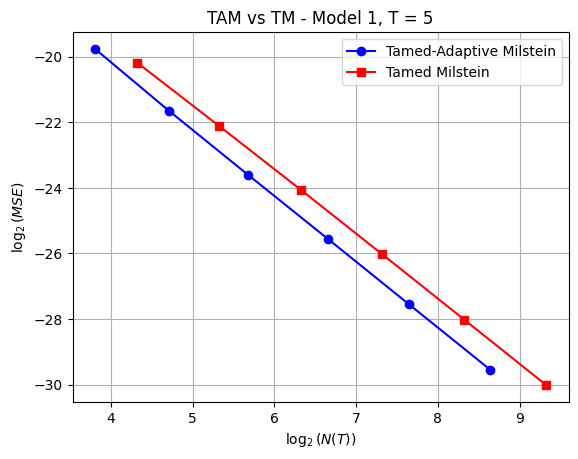}
    \includegraphics[width=0.2\linewidth]{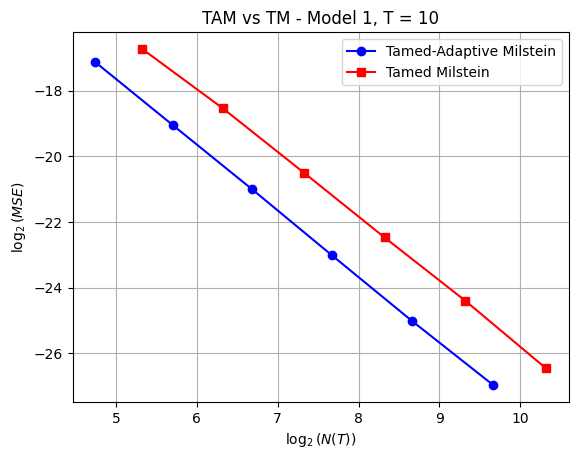}
    \caption{Comparison of TAM and TM schemes for Model 1 with $T=1, 3, 5, 10$.}
    \label{Fig:Model1}
\end{figure}
\begin{figure}
    \centering
    \includegraphics[width=0.2\linewidth]{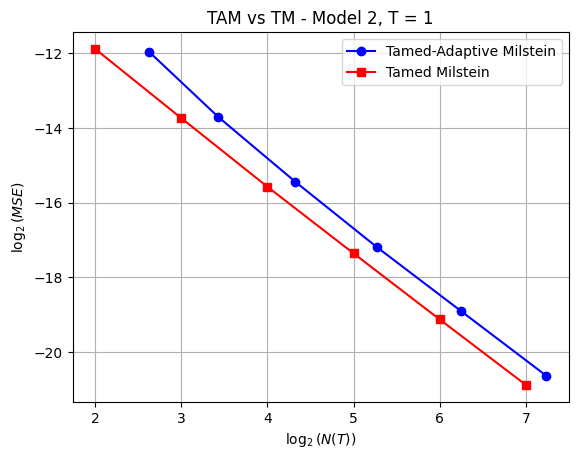}
    \includegraphics[width=0.2\linewidth]{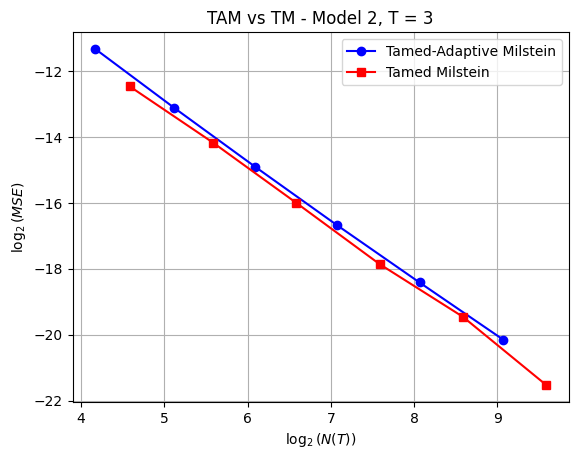}
    \includegraphics[width=0.2\linewidth]{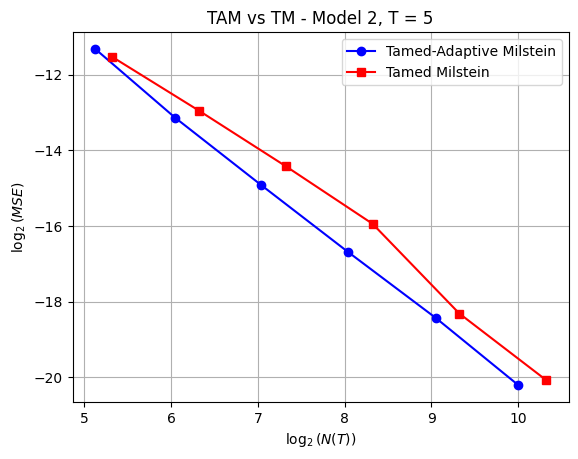}
    \includegraphics[width=0.2\linewidth]{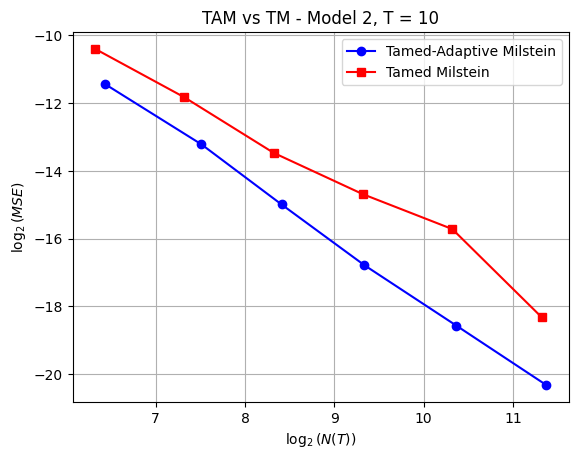}
    \caption{Comparison of TAM and TM schemes for Model 2 with $T=1, 3, 5, 10$. }
    \label{Fig:Model2}
\end{figure}
The numerical results for Model 1 and Model 2 are presented in Figure \ref{Fig:Model1} and Figure \ref{Fig:Model2}, respectively. Plots show $ \log_2(\mathrm{MSE})$ versus $\log_2(N(T))$. We have set   $N_k = 10^5$ for all $k$. From these figures, it is observed that the TM  scheme seems to perform better than the TAM  scheme when the terminal time $T$ is relatively small. However, as $T$ increases, the accuracy of the TM scheme gradually deteriorates and is eventually outperformed by the TAM scheme, even in the case of Model 1.

\section{Some auxiliary estimates} \label{sec:app}
\subsection{Estimates on the increment of the approximate solution}
In this section, we will give some estimates on the increments of the approximate solution. Using \eqref{Milstein3} and the binomial theorem, we get that for any positive integer $q$,  
	\begin{align}
	\widehat{X}_s^q
	&=\sum\limits_{0\leq i,j,r,v\leq q, i+j+r+v=q} \dfrac{q!}{i!j!r!v!}(\widehat{X}_{\underline{s}})^i\big(\mu(\widehat{X}_{\underline{s}})(s-\underline{s})\big)^j \Big(\sigma(\widehat{X}_{\underline{s}})(W_s-W_{\underline{s}})\Big)^{r} \Big(\dfrac{1}{2}q_\Delta(\widehat{X}_{\underline{s}})\left((W_s-W_{\underline{s}})^2-(s-\underline{s})\right)\Big)^{v}.
	\label{X^q1}
	\end{align}
 We will repeatedly apply the following estimate, which can be directly deduced from \eqref{chooseh} and the H\"older inequality: For any $s>0$, for any $j_1,\ldots, j_6 \in [0,1]$ such that $j_1+\ldots + j_6 \leq 1$,
\begin{align} \label{eqn:inc}
 |\mu(\hat{X}_{\underline{s}})|^{4j_1} |\mu'(\hat{X}_{\underline{s}})|^{2j_2} |\sigma(\hat{X}_{\underline{s}})|^{8j_3}
 |\sigma'(\hat{X}_{\underline{s}})|^{8j_4}  
 |q_\Delta (\hat{X}_{\underline{s}})|^{2j_5}
 |\hat{X}_{\underline{s}}|^{2l_0 j_6} 
 (s - \underline{s})\leq h_0 \Delta.
\end{align}
 
\begin{Lem} \label{Lem:-gmX^p1}
For any $p>0$, there exists a positive constant $C_p$, which does not depend on $\Delta$, such that for any $s>0$, it holds that 
	 	\begin{align}\label{-gmX^p1}
	\mathbb{E}\left[|\widehat{X}_s^p -\widehat{X}_{\underline{s}}^p|\big| \mathcal{F}_{\underline{s}}\right] \leq C_p\sum\limits_{i=0}^{p-2}  |\widehat{X}_{\underline{s}}|^i.
	\end{align}
\end{Lem}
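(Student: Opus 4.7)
The plan is to start from the multinomial expansion \eqref{X^q1} of $\widehat{X}_s^p$, subtract the leading term $\widehat{X}_{\underline{s}}^p$ corresponding to $(i,j,r,v) = (p,0,0,0)$, and then bound every remaining summand in absolute value after conditioning on $\mathcal{F}_{\underline{s}}$. Since $(W_s - W_{\underline{s}})/\sqrt{s - \underline{s}}$ is $\mathcal{F}_{\underline{s}}$-conditionally a standard normal, a direct computation gives
\[
\mathbb{E}\bigl[|W_s - W_{\underline{s}}|^r |(W_s - W_{\underline{s}})^2 - (s-\underline{s})|^v \,\big|\, \mathcal{F}_{\underline{s}}\bigr] = C_{r,v}(s-\underline{s})^{r/2 + v}.
\]
So the task reduces to showing, for every $(i,j,r,v)$ with $i + j + r + v = p$ and $i \leq p-1$, that
\[
T_{i,j,r,v} := |\widehat{X}_{\underline{s}}|^i |\mu(\widehat{X}_{\underline{s}})|^j |\sigma(\widehat{X}_{\underline{s}})|^r |q_\Delta(\widehat{X}_{\underline{s}})|^v (s - \underline{s})^{j + r/2 + v} \leq C_p(1 + |\widehat{X}_{\underline{s}}|^{p-2}).
\]

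From the adaptive step-size bound \eqref{chooseh} (and \eqref{eqn:inc}) one reads off $|\mu|^2(s-\underline{s}) \leq h_0$, $|\sigma|^4(s-\underline{s}) \leq h_0$, $|q_\Delta|(s-\underline{s}) \leq h_0$ and $(s-\underline{s}) \leq h_0$. Splitting $(s-\underline{s})^{j+r/2+v}$ into the pieces that match each factor yields a uniform bound $|\mu|^j |\sigma|^r |q_\Delta|^v (s-\underline{s})^{j+r/2+v} \leq C_p$, which already settles the easy case $i \leq p-2$: those contributions sum to a constant multiple of $\sum_{i=0}^{p-2}|\widehat{X}_{\underline{s}}|^i$, exactly as required.

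The main obstacle is the case $i = p-1$, which forces $j+r+v = 1$ and for which the crude estimate above leaves a factor $|\widehat{X}_{\underline{s}}|^{p-1}$, one power too large. To recover the missing decay, I will exploit the $|\widehat{X}_{\underline{s}}|^{l_0}$ sitting in the denominator of $h(\widehat{X}_{\underline{s}})$. For $(j,r,v)\in\{(1,0,0),(0,0,1)\}$, combining the elementary inequality $(1+a+b)^2 \geq (1+a)(1+b)$ (with $a\in\{|\mu|^2,|q_\Delta|\}$ and $b = |\widehat{X}_{\underline{s}}|^{l_0}$) with $|\mu|/(1+|\mu|^2) \leq 1/2$ and $|q_\Delta|/(1+|q_\Delta|) \leq 1$ gives $|\mu|(s-\underline{s}) \leq C/(1+|\widehat{X}_{\underline{s}}|^{l_0})$ and $|q_\Delta|(s-\underline{s}) \leq C/(1+|\widehat{X}_{\underline{s}}|^{l_0})$; since $l_0 \geq 2$, the function $x \mapsto x^{p-1}/(1+x^{l_0})$ is dominated by $C(1+x^{p-2})$. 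The genuinely hard subcase is $(j,r,v) = (0,1,0)$, because $|\sigma|$ can grow as fast as $|\widehat{X}_{\underline{s}}|^{l+\alpha+1}$ and may therefore overpower the available $|\widehat{X}_{\underline{s}}|^{l_0}$ decay. The remedy is a weighted AM–GM bound
\[
(1+|\sigma|^4+|\widehat{X}_{\underline{s}}|^{l_0})^2 \geq (1+|\sigma|^4)^{1/2}(1+|\widehat{X}_{\underline{s}}|^{l_0})^{3/2},
\]
which routes exactly one quarter of the weight to $|\sigma|^4$—just enough to ensure $|\sigma|/(1+|\sigma|^4)^{1/4} \leq 1$—and keeps $(1+|\widehat{X}_{\underline{s}}|^{l_0})^{3/4}$ in reserve. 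This produces $|\sigma|(s-\underline{s})^{1/2} \leq C/(1+|\widehat{X}_{\underline{s}}|^{l_0})^{3/4}$, so $T_{p-1,0,1,0} \leq C|\widehat{X}_{\underline{s}}|^{p-1}/(1+|\widehat{X}_{\underline{s}}|^{l_0})^{3/4}$, which is at most $C(1+|\widehat{X}_{\underline{s}}|^{p-2})$ because $l_0 \geq 2 \geq 4/3$. Summing over all admissible quadruples concludes the argument.
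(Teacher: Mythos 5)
Your proposal is correct and follows essentially the same route as the paper: the multinomial expansion \eqref{X^q1}, conditional Gaussian moment bounds for $W_s-W_{\underline{s}}$, and the adaptive step-size inequality \eqref{eqn:inc}. The only cosmetic difference lies in the borderline terms with $i=p-1$: the paper absorbs the single excess power of $|\widehat{X}_{\underline{s}}|$ directly into \eqref{eqn:inc} through the $|\widehat{X}_{\underline{s}}|^{2l_0 j_6}$ slot (e.g. $|\widehat{X}_{\underline{s}}\mu(\widehat{X}_{\underline{s}})|(s-\underline{s})\le h_0\Delta$), whereas you extract an explicit $(1+|\widehat{X}_{\underline{s}}|^{l_0})^{-\theta}$ decay factor and use it to dominate $|\widehat{X}_{\underline{s}}|^{p-1}$ by $1+|\widehat{X}_{\underline{s}}|^{p-2}$ --- both arguments spend the same resource, namely the $|x|^{l_0}$ term in the denominator of $h$, and both are valid.
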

\begin{proof}
It follows from \eqref{X^q1} that 
	\begin{align*}
	&\mathbb{E}\left[|\widehat{X}_s^p -\widehat{X}_{\underline{s}}^p|\big| \mathcal{F}_{\underline{s}}\right]\le p\vert\widehat{X}_{\underline{s}}\vert^{p-2} |\widehat{X}_{\underline{s}} \mu(\widehat{X}_{\underline{s}})|(s-\underline{s}) \\
	&\qquad+ p|\widehat{X}_{\underline{s}}|^{p-2}|\widehat{X}_{\underline{s}}\sigma(\widehat{X}_{\underline{s}})|\bE\left[|W_s-W_{\underline{s}}|\big| \mathcal{F}_{\underline{s}}\right] +\dfrac{p}{2}|\widehat{X}_{\underline{s}}^{p-2}||\widehat{X}_{\underline{s}}||q_\Delta(\widehat{X}_{\underline{s}})|\bE\left[\left|(W_s-W_{\underline{s}})^2-(s-\underline{s})\right|\big| \mathcal{F}_{\underline{s}}\right] \\
	&\qquad+\sum\limits_{0\leq i\leq p-2, i+j+r+v=p} \dfrac{ p!}{i!j!r!v!}\left|\widehat{X}_{\underline{s}}\right|^i\left|\mu(\widehat{X}_{\underline{s}})(s-\underline{s})\right|^j \\
&\qquad\qquad\times\mathbb{E}\left[\left|\sigma(\widehat{X}_{\underline{s}})(W_s-W_{\underline{s}})\right|^{r}\left|\dfrac{1}{2}q_\Delta(\widehat{X}_{\underline{s}})\left((W_s-W_{\underline{s}})^2-(s-\underline{s})\right)\right|^{v} \Big| \mf_{\underline{s}}\right].
	\end{align*}
	Using Cauchy-Schwarz inequality and \eqref{markov2}, we get
	\begin{align}\label{Holder}
&\mathbb{E}\left[\left|\sigma(\widehat{X}_{\underline{s}})(W_s-W_{\underline{s}})\right|^{r}\left|\dfrac{1}{2}q_\Delta(\widehat{X}_{\underline{s}})\left((W_s-W_{\underline{s}})^2-(s-\underline{s})\right)\right|^{v} \Big| \mf_{\underline{s}}\right] \notag\\
	&\leq \left(\mathbb{E}\left[\left|\sigma(\widehat{X}_{\underline{s}})(W_s-W_{\underline{s}})\right|^{2r} \big| \mathcal{F}_{\underline{s}}\right]\right)^{1/2}\left(\mathbb{E}\left[\left(\dfrac{1}{2}q_\Delta(\widehat{X}_{\underline{s}})\left((W_s-W_{\underline{s}})^2-(s-\underline{s})\right)\right)^{2v}\big| \mathcal{F}_{\underline{s}}\right]\right)^{1/2}\notag\\
	&\leq C_r C_v|\sm(\widehat{X}_{\underline{s}})|^r(s-\underline{s})^{r/2}\left|q_\Delta(\widehat{X}_{\underline{s}})\right|^v(s-\underline{s})^v.
	\end{align}
 This, together with \eqref{C2} and \eqref{eqn:inc}, implies \eqref{-gmX^p1}. 
\end{proof}

\begin{Lem} \label{Lem:X^p-1b1}
For any $p>0$, there exists a positive constant $C_p$, which does not depend on $\Delta$, such that for any $s>0$, it holds that 
	\begin{equation}
	\mathbb{E}\left[|\widehat{X}_s^{p-1}-\widehat{X}_{\underline{s}}^{p-1}||\mu(\widehat{X}_{\underline{s}})| \big| \mathcal{F}_{\underline{s}} \right] \leq   C_p\sum\limits_{i=0}^{p-2} |\widehat{X}_{\underline{s}}|^i. \label{X^p-1b1}
	\end{equation}
\end{Lem}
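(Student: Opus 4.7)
The plan is to follow the blueprint of Lemma \ref{Lem:-gmX^p1} term-by-term. First I would apply the binomial expansion \eqref{X^q1} with $q=p-1$ and subtract $\widehat{X}_{\underline{s}}^{p-1}$, which removes exactly the $(j,r,v)=(0,0,0)$ summand; every surviving term then carries $i\le p-2$, so the factor $|\widehat{X}_{\underline{s}}|^{i}$ is automatically absorbed into $\sum_{i=0}^{p-2}|\widehat{X}_{\underline{s}}|^{i}$. Multiplying by the $\mathcal{F}_{\underline{s}}$-measurable factor $|\mu(\widehat{X}_{\underline{s}})|$ and taking conditional expectation, the Brownian factors are estimated by Cauchy--Schwarz together with \eqref{markov2} exactly as in \eqref{Holder}. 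This yields, for each tuple $(i,j,r,v)$ with $i+j+r+v=p-1$ and $(j,r,v)\neq(0,0,0)$, a deterministic dominating term of the form
\begin{equation*}
C_{j,r,v}\,|\widehat{X}_{\underline{s}}|^{i}\,|\mu(\widehat{X}_{\underline{s}})|^{j+1}\,|\sigma(\widehat{X}_{\underline{s}})|^{r}\,|q_\Delta(\widehat{X}_{\underline{s}})|^{v}\,(s-\underline{s})^{j+r/2+v}.
\end{equation*}

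The next step is to bound the nonlinear coefficient factor above by a constant depending on $p$ and $h_0$ but not on $\Delta$. Since $j+r+v\ge 1$, the available $(s-\underline{s})$-exponent is at least $1/2$, which is exactly what is needed to trade off against the nonlinearities via \eqref{eqn:inc}. Indeed, \eqref{chooseh} yields in particular the one-factor estimates $|\mu|^{2}(s-\underline{s})^{1/2}\le\sqrt{h_0\Delta}$, $|\sigma|^{4}(s-\underline{s})^{1/2}\le\sqrt{h_0\Delta}$ and $|q_\Delta|(s-\underline{s})^{1/2}\le\sqrt{h_0\Delta}$, each of which is uniformly bounded for $\Delta\in(0,1)$. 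Splitting $(s-\underline{s})^{j+r/2+v}$ into pieces of the correct size and pairing them with the matching powers of $|\mu|$, $|\sigma|$ and $|q_\Delta|$, one obtains a bound by a constant independent of $\Delta$, and summing over the (finitely many) $(i,j,r,v)$ then delivers the claimed estimate.

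The main obstacle I expect is the bookkeeping in the paragraph above: multiplying by $|\mu(\widehat{X}_{\underline{s}})|$ promotes the $|\mu|$-exponent to $j+1$, which may be as large as $p$ (when $j=p-1$), so the split of $(s-\underline{s})^{j+r/2+v}$ between the three nonlinear factors must be chosen carefully. Formally, for each admissible triple $(j,r,v)$ one selects exponents $j_1,j_3,j_5\in[0,1]$ with $4j_1=j+1$, $8j_3=r$, $2j_5=v$ and $j_1+j_3+j_5\le 1$, and then applies \eqref{eqn:inc} (possibly raised to a suitable fractional power of at most $1$) to close the bound. The design of the stepsize $h$ in \eqref{chooseh} is tailored precisely so that this is possible for every $(j,r,v)$ that appears in the expansion; once this is verified case by case, the computation concludes exactly as in Lemma \ref{Lem:-gmX^p1}.
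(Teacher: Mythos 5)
Your proposal is correct and follows essentially the same route as the paper: expand via \eqref{X^q1} with $q=p-1$, control the Brownian factors by Cauchy--Schwarz and \eqref{markov2} exactly as in \eqref{Holder}, and absorb the nonlinear coefficient factors using the step-size bound \eqref{eqn:inc}; the paper merely organizes the pairing case by case ($j\ge 1$; $j=v=0$; $j=r=0$; $r,v\ge 1$) instead of by a uniform exponent count. One small caveat: for $j+1>4$ a single application of \eqref{eqn:inc} with $4j_1=j+1$ is not admissible (it would force $j_1>1$), but the device in your second paragraph of multiplying powers of the one-factor estimates $|\mu|^2(s-\underline{s})^{1/2}\le\sqrt{h_0\Delta}$, $|\sigma|^4(s-\underline{s})^{1/2}\le\sqrt{h_0\Delta}$, $|q_\Delta|(s-\underline{s})^{1/2}\le\sqrt{h_0\Delta}$ closes this, since the required time exponent $(j+1)/4+r/8+v/2$ never exceeds the available $j+r/2+v$ whenever $j+r+v\ge 1$.
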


\begin{proof}
    Applying \eqref{X^q1} for $q=p-1$,  we get 
	\begin{align}\label{xb1}
	&\mathbb{E}\left[|\widehat{X}_s^{p-1}-\widehat{X}_{\underline{s}}^{p-1}||\mu(\widehat{X}_{\underline{s}})| \big| \mathcal{F}_{\underline{s}} \right]  \leq (p-1)\left|\widehat{X}_{\underline{s}}^{p-2}\right|\mu^2(\widehat{X}_{\underline{s}})(s-\underline{s})\notag\\
	&+(p-1)|\widehat{X}_{\underline{s}}^{p-2}||\mu(\widehat{X}_{\underline{s}})||\sigma(\widehat{X}_{\underline{s}})|\mathbb{E}\left[|W_s-W_{\underline{s}}|| \mathcal{F}_{\underline{s}} \right]  +\dfrac{1}{2}(p-1)|\widehat{X}_{\underline{s}}|^{p-2}|\mu(\widehat{X}_{\underline{s}})||q_\Delta(\widehat{X}_{\underline{s}})|\mathbb{E}\left[\left|(W_s-W_{\underline{s}})^2-(s-\underline{s})\right|\big| \mathcal{F}_{\underline{s}} \right]   \notag\\
&+\sum\limits_{0\leq i\leq p-3, i+j+r+v=p-1, j
\geq 1} \dfrac{(p-1)!}{i!j!r!v!}\left|\mu(\widehat{X}_{\underline{s}})\right|\left|\widehat{X}_{\underline{s}}\right|^i\left|\mu(\widehat{X}_{\underline{s}})(s-\underline{s})\right|^j \notag\\
&\qquad\times \mathbb{E}\left[\left|\sigma(\widehat{X}_{\underline{s}})(W_s-W_{\underline{s}})\right|^{r}\left|\left(\dfrac{1}{2}q_\Delta(\widehat{X}_{\underline{s}})\left((W_s-W_{\underline{s}})^2-(s-\underline{s})\right)\right)^{v}\right|\big| \mathcal{F}_{\underline{s}}\right] \notag\\
&+\sum\limits_{0\leq i\leq p-3, i+r=p-1} \dfrac{ (p-1)!}{i!r!}\left|\mu(\widehat{X}_{\underline{s}})\right|\left|\widehat{X}_{\underline{s}}\right|^i\mathbb{E}\left[\left|\sigma(\widehat{X}_{\underline{s}})(W_s-W_{\underline{s}})\right|^{r}\big| \mathcal{F}_{\underline{s}}\right]\notag\\
&+\sum\limits_{0\leq i\leq p-3, i+v=p-1} \dfrac{ (p-1)!}{i!v!}\left|\mu(\widehat{X}_{\underline{s}})\right|\left|\widehat{X}_{\underline{s}}\right|^i\mathbb{E}\left[\left|\dfrac{1}{2}q_\Delta(\widehat{X}_{\underline{s}})\left((W_s-W_{\underline{s}})^2-(s-\underline{s})\right)\right|^{v}\big| \mathcal{F}_{\underline{s}}\right]\notag\\
&+\sum\limits_{0\leq i\leq p-3, i+r+v=p-1, r, v\geq 1 } \dfrac{ (p-1)!}{i!r!v!}\left|\mu(\widehat{X}_{\underline{s}})\right|\left|\widehat{X}_{\underline{s}}\right|^i \mathbb{E}\left[\left|\sigma(\widehat{X}_{\underline{s}})(W_s-W_{\underline{s}})\right|^{r}\left|\dfrac{1}{2}q_\Delta(\widehat{X}_{\underline{s}})\left((W_s-W_{\underline{s}})^2-(s-\underline{s})\right)\right|^{v}\big| \mathcal{F}_{\underline{s}}\right].
	\end{align}
	For $0\leq i\leq p-3, i+j+r+v=p-1, j
\geq 1$, from \eqref{eqn:inc}, we have
	\begin{align*}
	\left|\mu(\widehat{X}_{\underline{s}})\right|\left|\mu(\widehat{X}_{\underline{s}})(s-\underline{s})\right|^j=\mu^2(\widehat{X}_{\underline{s}})(s-\underline{s}) \left|\mu(\widehat{X}_{\underline{s}}) (s-\underline{s})\right|^{j-1} \leq C.
	\end{align*}
	This, combined with \eqref{Holder}, yields
	\begin{align}\label{tong11}
	\left|\mu(\widehat{X}_{\underline{s}})\right|\left|\mu(\widehat{X}_{\underline{s}})(s-\underline{s})\right|^j \mathbb{E}\left[\left|\sigma(\widehat{X}_{\underline{s}})(W_s-W_{\underline{s}})\right|^{r}\left|\left(\dfrac{1}{2}q_\Delta(\widehat{X}_{\underline{s}})\left((W_s-W_{\underline{s}})^2-(s-\underline{s})\right)\right)^{v}\right|\big| \mathcal{F}_{\underline{s}}\right] \leq C.
	\end{align}
	For $0 \leq i \leq p-3, i+r=p-1,$ using H\"older's inequality, \eqref{markov2} and  \eqref{eqn:inc},
	we get
	\begin{align}\label{tong21}
	\left|\mu(\widehat{X}_{\underline{s}})\right|\mathbb{E}\left[\left|\sigma(\widehat{X}_{\underline{s}})(W_s-W_{\underline{s}})\right|^{r}\big| \mathcal{F}_{\underline{s}}\right]=\mathbb{E}\left[\left|\mu(\widehat{X}_{\underline{s}})\sigma(\widehat{X}_{\underline{s}})(W_s-W_{\underline{s}})\right|\left|\sigma(\widehat{X}_{\underline{s}})(W_s-W_{\underline{s}})\right|^{r-1}\big| \mathcal{F}_{\underline{s}}\right] \leq C.
	\end{align}
	For $0 \leq i \leq p-3, i+v=p-1,$ a similar  argument yields to
	\begin{align}\label{tong31}
	\left|\mu(\widehat{X}_{\underline{s}})\right|\mathbb{E}\left[\left|\dfrac{1}{2}q_\Delta(\widehat{X}_{\underline{s}})\left((W_s-W_{\underline{s}})^2-(s-\underline{s})\right)\right|^{v}\big| \mathcal{F}_{\underline{s}}\right] \leq C.
	\end{align}
	For $0 \leq i \leq p-3, i+r+v=p-1, r, v \geq 1,$ using H\"older's inequality, \eqref{markov2} and  \eqref{eqn:inc},
	we get
	\begin{align}\label{tong41}
	&\left|\mu(\widehat{X}_{\underline{s}})\right|\mathbb{E}\left[\left|\sigma(\widehat{X}_{\underline{s}})(W_s-W_{\underline{s}})\right|^{r}\left|\dfrac{1}{2}q_\Delta(\widehat{X}_{\underline{s}})\left((W_s-W_{\underline{s}})^2-(s-\underline{s})\right)\right|^{v}\big| \mathcal{F}_{\underline{s}}\right]\notag\\
	& =\mathbb{E}\left[\left|\mu(\widehat{X}_{\underline{s}})\sigma(\widehat{X}_{\underline{s}})(W_s-W_{\underline{s}})\right|\left|\sigma(\widehat{X}_{\underline{s}})(W_s-W_{\underline{s}})\right|^{r-1}\left|\dfrac{1}{2}q_\Delta(\widehat{X}_{\underline{s}})\left((W_s-W_{\underline{s}})^2-(s-\underline{s})\right)\right|^{v}\big| \mathcal{F}_{\underline{s}}\right]\notag\\
	&\leq \dfrac{1}{2}\mathbb{E}\left[\left|\mu(\widehat{X}_{\underline{s}})\sigma(\widehat{X}_{\underline{s}})(W_s-W_{\underline{s}})\right|^2\big| \mathcal{F}_{\underline{s}}\right]+ \dfrac{1}{2}\mathbb{E}\left[\left|\sigma(\widehat{X}_{\underline{s}})(W_s-W_{\underline{s}})\right|^{2(r-1)}\left|\dfrac{1}{2}q_\Delta(\widehat{X}_{\underline{s}})\left((W_s-W_{\underline{s}})^2-(s-\underline{s})\right)\right|^{2v}\big| \mathcal{F}_{\underline{s}}\right] \notag\\
	&\leq C.
	\end{align}
	Plugging \eqref{tong11}, \eqref{tong21}, \eqref{tong31} and \eqref{tong41} into \eqref{xb1}, we get \eqref{X^p-1b1}.
\end{proof}

\begin{Lem} \label{Lem:X^p-2sm1}
For any $p>0$, there exists a positive constant $C_p$, which does not depend on $\Delta$, such that for any $s>0$, it holds that 
	 \begin{equation}
	\mathbb{E}\left[|\widehat{X}_s^{p-2}-\widehat{X}_{\underline{s}}^{p-2}|\sigma^2(\widehat{X}_{\underline{s}}) \big| \mathcal{F}_{\underline{s}} \right]\leq  C_p\sum\limits_{i=0}^{p-2} |\widehat{X}_{\underline{s}}|^i . \label{X^p-2sm1}
	\end{equation}
\end{Lem}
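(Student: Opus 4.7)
The plan is to mirror exactly the strategy used for Lemmas~\ref{Lem:-gmX^p1} and~\ref{Lem:X^p-1b1}, the only difference being the prefactor $\sigma^2(\widehat{X}_{\underline{s}})$ in place of $|\mu(\widehat{X}_{\underline{s}})|$ (or nothing). First I would expand $\widehat{X}_s^{p-2}$ by the formula \eqref{X^q1} with $q=p-2$ and cancel the leading term corresponding to $i=p-2$. This yields a finite sum indexed by tuples $(i,j,r,v)$ with $i+j+r+v=p-2$, $0\leq i\leq p-3$, and therefore $j+r+v\geq 1$, each of the form
\[
c_{ijrv}\,\widehat{X}_{\underline{s}}^{\,i}\,\bigl(\mu(\widehat{X}_{\underline{s}})(s-\underline{s})\bigr)^{j}\, \bigl(\sigma(\widehat{X}_{\underline{s}})(W_{s}-W_{\underline{s}})\bigr)^{r}\, \bigl(\tfrac{1}{2}q_{\Delta}(\widehat{X}_{\underline{s}})\bigl((W_{s}-W_{\underline{s}})^{2}-(s-\underline{s})\bigr)\bigr)^{v}.
\]
Multiplying each such term by $\sigma^{2}(\widehat{X}_{\underline{s}})$ and taking conditional expectation given $\mf_{\underline{s}}$, the Brownian factors are controlled via \eqref{markov2} and a Cauchy--Schwarz split exactly as in the derivation of~\eqref{Holder}, by a constant multiple of $|\sigma(\widehat{X}_{\underline{s}})|^{r}|q_{\Delta}(\widehat{X}_{\underline{s}})|^{v}(s-\underline{s})^{r/2+v}$.

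The main task is then to verify that, for every admissible tuple with $j+r+v\geq 1$,
\[
\sigma^{2}(\widehat{X}_{\underline{s}})\,|\mu(\widehat{X}_{\underline{s}})|^{j}\,|\sigma(\widehat{X}_{\underline{s}})|^{r}\,|q_{\Delta}(\widehat{X}_{\underline{s}})|^{v}\,(s-\underline{s})^{\,j+r/2+v}\leq C,
\]
uniformly in $\Delta$ and $s$. This follows from the adaptive step-size inequality~\eqref{eqn:inc}: since each of $|\mu|^{4}(s-\underline{s})$, $|\sigma|^{8}(s-\underline{s})$ and $|q_{\Delta}|^{2}(s-\underline{s})$ is bounded by $h_{0}\Delta$, one can redistribute the total $(s-\underline{s})$-weight using Hölder's inequality and absorb the coefficient powers $|\mu|^{j}$, $|\sigma|^{r+2}$, $|q_{\Delta}|^{v}$ individually, leaving only nonnegative residual powers of $(s-\underline{s})\leq 1$. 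Once this uniform bound is established, summing over the finitely many tuples and noting the upper index $i\leq p-3\leq p-2$ yields the required estimate $C_{p}\sum_{i=0}^{p-2}|\widehat{X}_{\underline{s}}|^{i}$.

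The main subtlety, and the step I would handle most carefully, is the edge case $r=0$: the $\sigma^{2}$ prefactor then has no Brownian companion producing an intrinsic $(s-\underline{s})^{1/2}$, so one must rely on $j\geq 1$ or $v\geq 1$ to generate the $(s-\underline{s})$-weight needed to absorb $\sigma^{2}$. Concretely, since $|\sigma|^{8}(s-\underline{s})\leq h_{0}\Delta$, one gets $|\sigma|^{2}(s-\underline{s})^{\theta}$ bounded for every $\theta\geq 1/4$, so pairing $\sigma^{2}$ with a fraction of $(s-\underline{s})^{j}$ (extracted from $|\mu|^{j}(s-\underline{s})^{j}$ after using $|\mu|^{4}(s-\underline{s})\leq h_0\Delta$) or of $(s-\underline{s})^{v}$ (extracted from $|q_{\Delta}|^{v}(s-\underline{s})^{v}$ after using $|q_{\Delta}|^{2}(s-\underline{s})\leq h_0\Delta$) closes the estimate in every remaining case.
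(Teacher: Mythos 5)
Your proposal is correct and follows essentially the same route as the paper: expand $\widehat{X}_s^{p-2}$ via \eqref{X^q1}, cancel the $i=p-2$ term, control the Brownian factors through \eqref{markov2} and the Cauchy--Schwarz split of \eqref{Holder}, and absorb the coefficient powers using the step-size bound \eqref{eqn:inc}. The only difference is cosmetic: where the paper closes the estimate by a four-way case analysis on $(j,r,v)$ with ad hoc pairings such as $\sigma^3(W_s-W_{\underline{s}})$ and a Young-inequality split when $r,v\ge 1$, you prove a single uniform deterministic bound by counting fractional powers of $(s-\underline{s})$ (needing $j/4+(r+2)/8+v/2$ against the available $j+r/2+v$, which works precisely because $j+r+v\ge 1$), and your treatment of the $r=0$ edge case is exactly the point the paper's casework is designed to handle.
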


\begin{proof}
 Applying \eqref{X^q1} for $q=p-2$, we get 
	\begin{align*}
	&\widehat{X}_s^{p-2} = \widehat{X}_{\underline{s}}^{p-2}  \notag\\
&+\sum\limits_{0\leq i\leq p-3, i+j+r+v=p-2} \dfrac{ (p-2)!}{i!j!r!v!}
  (\widehat{X}_{\underline{s}})^i\left(\mu(\widehat{X}_{\underline{s}})(s-\underline{s})\right)^j \left(\sigma(\widehat{X}_{\underline{s}})(W_s-W_{\underline{s}})\right)^{r}\left(\dfrac{1}{2}q_\Delta(\widehat{X}_{\underline{s}})\left((W_s-W_{\underline{s}})^2-(s-\underline{s})\right)\right)^{v}.
  \end{align*}
  Therefore, 
	\begin{align}\label{sigmax1}
	&\mathbb{E}\left[|\widehat{X}_s^{p-2}-\widehat{X}_{\underline{s}}^{p-2}|\sm^2(\widehat{X}_{\underline{s}}) \big| \mathcal{F}_{\underline{s}} \right]\notag\\ 
&\leq \sum\limits_{0\leq i\leq p-3, i+j+r+v=p-2, j
\geq 1} \dfrac{ (p-2)!}{i!j!r!v!}\sm^2(\widehat{X}_{\underline{s}})
  \left|\widehat{X}_{\underline{s}}\right|^i\left|\mu(\widehat{X}_{\underline{s}})(s-\underline{s})\right|^j \notag\\
  &\quad\times \mathbb{E}\left[\left|\sigma(\widehat{X}_{\underline{s}})(W_s-W_{\underline{s}})\right|^{r}\left|\dfrac{1}{2}q_\Delta(\widehat{X}_{\underline{s}})\left((W_s-W_{\underline{s}})^2-(s-\underline{s})\right)\right|^{v}\big| \mathcal{F}_{\underline{s}}\right] \notag\\
&\quad+\sum\limits_{0\leq i\leq p-3, i+r=p-2} \dfrac{ (p-2)!}{i!r!}\sm^2(\widehat{X}_{\underline{s}})\left|\widehat{X}_{\underline{s}}\right|^i\mathbb{E}\left[\left|\sigma(\widehat{X}_{\underline{s}})(W_s-W_{\underline{s}})\right|^{r}\big| \mathcal{F}_{\underline{s}}\right] \notag\\
&\quad+\sum\limits_{0\leq i\leq p-3, i+v=p-2} \dfrac{ (p-2)!}{i!v!}\sm^2(\widehat{X}_{\underline{s}})\left|\widehat{X}_{\underline{s}}\right|^i\mathbb{E}\left[\left|\dfrac{1}{2}q_\Delta(\widehat{X}_{\underline{s}})\left((W_s-W_{\underline{s}})^2-(s-\underline{s})\right)\right|^{v}\big| \mathcal{F}_{\underline{s}}\right] \notag\\
&\quad+\sum\limits_{0\leq i\leq p-3, i+r+v=p-2, r, v\geq 1 } \dfrac{ (p-2)!}{i!r!v!}\sm^2(\widehat{X}_{\underline{s}})\left|\widehat{X}_{\underline{s}}\right|^i \mathbb{E}\left[\left|\sigma(\widehat{X}_{\underline{s}})(W_s-W_{\underline{s}})\right|^{r}\left|\dfrac{1}{2}q_\Delta(\widehat{X}_{\underline{s}})\left((W_s-W_{\underline{s}})^2-(s-\underline{s})\right)\right|^{v}\big| \mathcal{F}_{\underline{s}}\right].
	\end{align}
For $	0\leq i\leq p-3, i+j+r+v=p-2, j
\geq 1,$ again using \eqref{Holder} and  \eqref{eqn:inc}, we obtain
\begin{align}\label{tong50}
\sm^2(\widehat{X}_{\underline{s}})
  \left|\mu(\widehat{X}_{\underline{s}})(s-\underline{s})\right|^j \mathbb{E}\left[\left|\sigma(\widehat{X}_{\underline{s}})(W_s-W_{\underline{s}})\right|^{r}\left|\dfrac{1}{2}q_\Delta(\widehat{X}_{\underline{s}})\left((W_s-W_{\underline{s}})^2-(s-\underline{s})\right)\right|^{v}\big| \mathcal{F}_{\underline{s}}\right] \leq C.
\end{align}
For $	0\leq i\leq p-3, i+r=p-2,$ using H\"older's inequality, \eqref{markov2} and \eqref{eqn:inc},  we have
	\begin{align}\label{tong51}
\mathbb{E}\left[\sm^2(\widehat{X}_{\underline{s}})\left|\sigma(\widehat{X}_{\underline{s}})(W_s-W_{\underline{s}})\right|^{r}\big| \mathcal{F}_{\underline{s}}\right]&=\mathbb{E}\left[\left|\sm^3(\widehat{X}_{\underline{s}})\left(W_s-W_{\underline{s}}\right)\right|\left|\sigma(\widehat{X}_{\underline{s}})(W_s-W_{\underline{s}})\right|^{r-1}\big| \mathcal{F}_{\underline{s}}\right] \leq C.
	 \end{align} 
	 For $0 \leq i\leq p-3, i+v=p-2,$ a similar argument yields to
	\begin{align}\label{c1}
	\sm^2(\widehat{X}_{\underline{s}})\mathbb{E}\left[\left|\dfrac{1}{2}q_\Delta(\widehat{X}_{\underline{s}})\left((W_s-W_{\underline{s}})^2-(s-\underline{s})\right)\right|^{v}\big| \mathcal{F}_{\underline{s}}\right] \leq C.
	\end{align}
	 For $0\leq i\leq p-3, i+r+v=p-2, r, v\geq 1,$ we have
	 \begin{align*}
	& \sm^2(\widehat{X}_{\underline{s}})\left|\sigma(\widehat{X}_{\underline{s}})(W_s-W_{\underline{s}})\right|^{r}\left|\dfrac{1}{2}q_\Delta(\widehat{X}_{\underline{s}})\left((W_s-W_{\underline{s}})^2-(s-\underline{s})\right)\right|^{v}\notag\\
	 &=\left|\sigma^2(\widehat{X}_{\underline{s}})(W_s-W_{\underline{s}})\right|\left|\sigma(\widehat{X}_{\underline{s}})(W_s-W_{\underline{s}})\right|^{r-1}\left|\dfrac{1}{2}\sigma(\widehat{X}_{\underline{s}})q_\Delta(\widehat{X}_{\underline{s}})\left((W_s-W_{\underline{s}})^2-(s-\underline{s})\right)\right|\\
	 &\quad\times\left|\dfrac{1}{2}q_\Delta(\widehat{X}_{\underline{s}})\left((W_s-W_{\underline{s}})^2-(s-\underline{s})\right)\right|^{v-1} \\
	 &\leq \dfrac{1}{2}\left|\sigma^2(\widehat{X}_{\underline{s}})(W_s-W_{\underline{s}})\right|^2\left|\sigma(\widehat{X}_{\underline{s}})(W_s-W_{\underline{s}})\right|^{2(r-1)}\\
	 &\quad +\dfrac{1}{8}\left|\sigma(\widehat{X}_{\underline{s}})q_\Delta(\widehat{X}_{\underline{s}})\left((W_s-W_{\underline{s}})^2-(s-\underline{s})\right)\right|^2\left|\dfrac{1}{2}q_\Delta(\widehat{X}_{\underline{s}})\left((W_s-W_{\underline{s}})^2-(s-\underline{s})\right)\right|^{2(v-1)}.
	 \end{align*}
	 Then, using H\"older's inequality, \eqref{markov2}, \eqref{eqn:inc}, and \eqref{C2}, we get
	 \begin{align}\label{smsmq1}
	 \bE\left[\sm^2(\widehat{X}_{\underline{s}})\left|\sigma(\widehat{X}_{\underline{s}})(W_s-W_{\underline{s}})\right|^{r}\left|\dfrac{1}{2}q_\Delta(\widehat{X}_{\underline{s}})\left((W_s-W_{\underline{s}})^2-(s-\underline{s})\right)\right|^{v}\big| \mathcal{F}_{\underline{s}}\right] \leq C.
	 \end{align}
	 Plugging \eqref{tong50} \eqref{tong51}, \eqref{c1} and \eqref{smsmq1} into \eqref{sigmax1}, we get \eqref{X^p-2sm1} 
\end{proof}

\begin{Lem} \label{Lem:$X^pq^21$}
For any $p>0$,  there exists a positive constant $C_p$, which does not depend on $\Delta$, such that for any $s>0$, it holds that 
  \begin{align}\label{$X^pq^21$}
  \mathbb{E}\left[|\widehat{X}_s|^{p-2}\left(2|\sm(\widehat{X}_{\underline{s}})q_{\Delta}(\widehat{X}_{\underline{s}})(W_s-W_{\underline{s}})|+ q^2_{\Delta}(\widehat{X}_{\underline{s}})(W_s-W_{\underline{s}})^2\right) \big| \mathcal{F}_{\underline{s}} \right]  \leq C_p\sum\limits_{i=0}^{p-2} |\widehat{X}_{\underline{s}}|^i .
  \end{align}
\end{Lem}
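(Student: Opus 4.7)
The plan is to follow the template of the proofs of Lemmas \ref{Lem:-gmX^p1}--\ref{Lem:X^p-2sm1}. Since this lemma is applied only for even positive integers $p$ (via $p=2(k_0+1)$ in the proof of Proposition \ref{mmX^}), I may assume $p-2$ is a nonnegative even integer, so $|\widehat{X}_s|^{p-2}=\widehat{X}_s^{p-2}$ and \eqref{X^q1} with $q=p-2$ is directly applicable. I would first split the left-hand side into
\begin{align*}
I_1 &:= 2\mathbb{E}\!\left[\widehat{X}_s^{p-2}\,\bigl|\sm(\widehat{X}_{\underline{s}})q_{\Delta}(\widehat{X}_{\underline{s}})(W_s-W_{\underline{s}})\bigr|\,\Big|\,\mathcal{F}_{\underline{s}}\right],\\
I_2 &:= \mathbb{E}\!\left[\widehat{X}_s^{p-2}\, q_{\Delta}^2(\widehat{X}_{\underline{s}})(W_s-W_{\underline{s}})^2\,\Big|\,\mathcal{F}_{\underline{s}}\right],
\end{align*}
insert the expansion of $\widehat{X}_s^{p-2}$, and isolate the leading contribution $\widehat{X}_{\underline{s}}^{p-2}$ from the mixed terms indexed by tuples $(i,j,r,v)$ with $0\leq i\leq p-3$ and $i+j+r+v=p-2$.

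For the leading contribution to $I_2$, the identity $\mathbb{E}[(W_s-W_{\underline{s}})^2\mid\mathcal{F}_{\underline{s}}]=s-\underline{s}$ combined with the case $j_5=1$ of \eqref{eqn:inc} yields $q_\Delta^2(\widehat{X}_{\underline{s}})(s-\underline{s})\leq h_0\Delta$, so this term is at most $h_0|\widehat{X}_{\underline{s}}|^{p-2}$. For the leading contribution to $I_1$, \eqref{markov2} gives $\mathbb{E}[|W_s-W_{\underline{s}}|\mid\mathcal{F}_{\underline{s}}]\leq C(s-\underline{s})^{1/2}$, and Cauchy--Schwarz together with $|\sm|^2(s-\underline{s})^{1/2}\leq (h_0\Delta)^{1/2}$ (from the $j_3=1/2$ case of \eqref{eqn:inc}) and $|q_\Delta|^2(s-\underline{s})^{1/2}=|q_\Delta|\cdot |q_\Delta|(s-\underline{s})^{1/2}\leq\Delta^{-1/2}(h_0\Delta)^{1/2}=h_0^{1/2}$ (via \eqref{C2} and \eqref{eqn:inc}) produces the bound $C|\widehat{X}_{\underline{s}}|^{p-2}$.

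For the cross terms I would carry out the same four-case split used in the proofs of Lemmas \ref{Lem:X^p-1b1} and \ref{Lem:X^p-2sm1} (namely $j\geq 1$; $v=0$, $r=p-2-i$; $r=0$, $v=p-2-i$; and $r,v\geq 1$), apply H\"older's inequality and \eqref{markov2} to reduce the Brownian-increment factors to powers of $(s-\underline{s})$, and then use \eqref{eqn:inc} to absorb the composite $|\mu|^j|\sm|^r|q_\Delta|^{v+\kappa}$ against the accumulated weight $(s-\underline{s})^{j+r/2+(v+\kappa)/2}$, where $\kappa\in\{1,2\}$ is the extra power contributed by the $\sm q_\Delta$ or $q_\Delta^2$ factor from $I_1$ or $I_2$. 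The elementary bounds $|\mu|(s-\underline{s})\leq h_0\Delta$, $|\sm|(s-\underline{s})^{1/2}\leq (h_0\Delta)^{1/2}$, and $|q_\Delta|(s-\underline{s})^{1/2}\leq (h_0\Delta)^{1/2}$ (all derived directly from \eqref{eqn:inc}) make each cross term at most $C|\widehat{X}_{\underline{s}}|^i$ for the relevant $i\leq p-3$. Summing over the finitely many tuples produces the required bound $C_p\sum_{i=0}^{p-2}|\widehat{X}_{\underline{s}}|^i$.

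The main obstacle is the handling of the extra factor $|q_\Delta|^{v+2}$ arising in $I_2$: a one-shot application of \eqref{eqn:inc} with $j_5=(v+2)/2$ would violate the budget $j_1+\dots+j_6\leq 1$ as soon as $v\geq 1$. The cure, already implicit in the derivation of \eqref{smsmq1} in the proof of Lemma \ref{Lem:X^p-2sm1}, is to iterate the case $j_5=1$ of \eqref{eqn:inc}: raising $|q_\Delta|^2(s-\underline{s})\leq h_0\Delta$ to the power $(v+2)/2$ yields $|q_\Delta|^{v+2}(s-\underline{s})^{(v+2)/2}\leq (h_0\Delta)^{(v+2)/2}$. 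Combined with the analogous power-iterated bounds for $|\mu|^j(s-\underline{s})^j$ and $|\sm|^r(s-\underline{s})^{r/2}$, this absorbs the full $(s-\underline{s})$-weight of every cross term and leaves a dimensionless constant, completing the reduction to $C_p\sum_{i=0}^{p-2}|\widehat{X}_{\underline{s}}|^i$.
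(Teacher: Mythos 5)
Your proposal is correct and follows essentially the same route as the paper: expand $\widehat{X}_s^{p-2}$ via \eqref{X^q1}, isolate the leading term $|\widehat{X}_{\underline{s}}|^{p-2}$, and control everything through \eqref{markov2}, iterated applications of \eqref{eqn:inc}, and \eqref{C2} for leftover powers of $q_\Delta$. The only (immaterial) difference is that the paper decouples the factor $2|\sm q_\Delta (W_s-W_{\underline{s}})|+q_\Delta^2(W_s-W_{\underline{s}})^2$ from the product $|\sm (W_s-W_{\underline{s}})|^{r}|\tfrac12 q_\Delta(\cdots)|^{v}$ in one stroke via $ab\le \tfrac12(a^2+b^2)$ and then reuses \eqref{Holder}, rather than carrying out your four-case split.
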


\begin{proof}
    	Applying \eqref{X^q1} with  $q=p-2, $ we get 
\begin{align*}
	&\mathbb{E}\left[|\widehat{X}_s|^{p-2}\left(2|\sm(\widehat{X}_{\underline{s}})q_{\Delta}(\widehat{X}_{\underline{s}})(W_s-W_{\underline{s}})|+ q^2_{\Delta}(\widehat{X}_{\underline{s}})(W_s-W_{\underline{s}})^2\right) \big| \mathcal{F}_{\underline{s}} \right]  \\
	&\leq |\widehat{X}_{\underline{s}}|^{p-2}\bE\left[2|\sm(\widehat{X}_{\underline{s}})q_{\Delta}(\widehat{X}_{\underline{s}})(W_s-W_{\underline{s}})|+ q^2_{\Delta}(\widehat{X}_{\underline{s}})(W_s-W_{\underline{s}})^2\big| \mathcal{F}_{\underline{s}} \right]   \notag\\
&+\sum\limits_{0\leq i\leq p-3, i+j+r+v=p-2} \dfrac{(p-2)!}{i!j!r!v!}
  \left|\widehat{X}_{\underline{s}}\right|^i\left|\mu(\widehat{X}_{\underline{s}})(s-\underline{s})\right|^j \notag\\
  & \times\mathbb{E}\left[\left(2|\sm(\widehat{X}_{\underline{s}})q_{\Delta}(\widehat{X}_{\underline{s}})(W_s-W_{\underline{s}})|+ q^2_{\Delta}(\widehat{X}_{\underline{s}})(W_s-W_{\underline{s}})^2\right)\left|\sigma(\widehat{X}_{\underline{s}})(W_s-W_{\underline{s}})\right|^{r}\left|\dfrac{1}{2}q_\Delta(\widehat{X}_{\underline{s}})\left((W_s-W_{\underline{s}})^2-(s-\underline{s})\right)\right|^{v}\big| \mathcal{F}_{\underline{s}}\right].
  \end{align*}
  Then, using Cauchy's inequality, we have
  \begin{align*}
 & \mathbb{E}\left[\left(2|\sm(\widehat{X}_{\underline{s}})q_{\Delta}(\widehat{X}_{\underline{s}})(W_s-W_{\underline{s}})|+ q^2_{\Delta}(\widehat{X}_{\underline{s}})(W_s-W_{\underline{s}})^2\right)\left|\sigma(\widehat{X}_{\underline{s}})(W_s-W_{\underline{s}})\right|^{r}\left|\dfrac{1}{2}q_\Delta(\widehat{X}_{\underline{s}})\left((W_s-W_{\underline{s}})^2-(s-\underline{s})\right)\right|^{v}\big| \mathcal{F}_{\underline{s}}\right] \notag\\
 &
 \leq \mathbb{E}\left[4\left(\sm(\widehat{X}_{\underline{s}})q_{\Delta}(\widehat{X}_{\underline{s}})(W_s-W_{\underline{s}})\right)^2+ q^4_{\Delta}(\widehat{X}_{\underline{s}})(W_s-W_{\underline{s}})^4\big| \mathcal{F}_{\underline{s}}\right]\\
 &\quad +\dfrac{1}{2}\bE\left[\left|\sigma(\widehat{X}_{\underline{s}})(W_s-W_{\underline{s}})\right|^{2r}\left|\dfrac{1}{2}q_\Delta(\widehat{X}_{\underline{s}})\left((W_s-W_{\underline{s}})^2-(s-\underline{s})\right)\right|^{2v}\Big| \mathcal{F}_{\underline{s}}\right]. 
  \end{align*}
  From \eqref{C2}, \eqref{markov2}  and \eqref{Holder}, we get \eqref{$X^pq^21$}.  
\end{proof}

\begin{Lem} \label{Lem:qh1}
For any $q>1$, there exists a positive constant $\widehat{C}_0$ which does not depend on $\Delta$ such that, for any $t>0$, 
\begin{align}
	\label{qh1}
	\bE \left[\left|\widehat{X}_{\underline{t}}\right|^q\right] \le 4 ^{q-1}\left(\bE\left[|\widehat{X}_t|^q\right] +\widehat{C}_0 \right).
	\end{align}
\end{Lem}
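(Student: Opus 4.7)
My plan is to invert the one-step Milstein update to express $\widehat X_{\underline t}$ in terms of $\widehat X_t$ and three remainder increments, each of which has a $q$-th moment bounded uniformly in $\Delta$. From \eqref{Milstein3} one has
\[
\widehat X_{\underline t} = \widehat X_t - \mu(\widehat X_{\underline t})(t-\underline t) - \sigma(\widehat X_{\underline t})(W_t-W_{\underline t}) - \tfrac{1}{2}q_\Delta(\widehat X_{\underline t})\bigl((W_t-W_{\underline t})^2-(t-\underline t)\bigr),
\]
and the elementary inequality $|a_1+a_2+a_3+a_4|^q \le 4^{q-1}\sum_{i=1}^{4}|a_i|^q$ immediately yields
\[
\bE\bigl[|\widehat X_{\underline t}|^q\bigr] \le 4^{q-1}\Bigl(\bE\bigl[|\widehat X_t|^q\bigr] + \bE[|R_1|^q] + \bE[|R_2|^q] + \bE[|R_3|^q]\Bigr),
\]
where $R_1,R_2,R_3$ denote the drift, diffusion and Milstein-correction increments, respectively. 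The task then reduces to producing a constant $\widehat C_0$, independent of $\Delta$, that dominates the sum of the three remainder moments.

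The main tool is the step-size inequality \eqref{eqn:inc}, which in the specializations $j_1 = 1/2$, $j_3 = 1$ and $j_5 = 1$ (with all other $j_i = 0$) yields respectively
\[
|\mu(\widehat X_{\underline t})|^2(t-\underline t) \le h_0\Delta, \qquad |\sigma(\widehat X_{\underline t})|^8(t-\underline t) \le h_0\Delta, \qquad |q_\Delta(\widehat X_{\underline t})|^2(t-\underline t) \le h_0\Delta,
\]
together with the trivial bound $(t-\underline t) \le h_0\Delta$. Combining these with the Brownian-increment moment estimates $\bE[|W_t-W_{\underline t}|^q \mid \mf_{\underline t}] \le C_q (t-\underline t)^{q/2}$ and $\bE[|(W_t-W_{\underline t})^2 - (t-\underline t)|^q \mid \mf_{\underline t}] \le C_q (t-\underline t)^q$, which follow from the strong Markov property as in \eqref{markov2}, I condition on $\mf_{\underline t}$ and estimate each $|R_i|^q$ in turn.

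For $R_1$ the pointwise factorization $|\mu|^q(t-\underline t)^q = (|\mu|^2(t-\underline t))^{q/2}(t-\underline t)^{q/2}$ gives $|R_1|^q \le (h_0\Delta)^{q}$; for $R_3$ an analogous argument using the bound on $|q_\Delta|^2(t-\underline t)$ yields $\bE[|R_3|^q\mid\mf_{\underline t}] \le C_q(h_0\Delta)^q$. The mildly delicate piece is $R_2$, since \eqref{eqn:inc} controls $|\sigma|^8$ rather than $|\sigma|^q$ for general $q>1$. My fix is to interpolate via $|\sigma|^q = (|\sigma|^8)^{q/8}$, obtaining
\[
|\sigma|^q(t-\underline t)^{q/2} \le (h_0\Delta)^{q/8}(t-\underline t)^{3q/8} \le (h_0\Delta)^{q/2},
\]
using $(t-\underline t) \le h_0\Delta$ once more. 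Since $\Delta \in (0,1)$ throughout, each of these bounds is a constant depending only on $q$ and $h_0$; setting $\widehat C_0$ to be the sum of the three resulting constants yields the claim. The only real obstacle is this diffusion interpolation, which is immediate once the correct exponents in \eqref{eqn:inc} are identified; every other estimate is routine once \eqref{Milstein3} is inverted.
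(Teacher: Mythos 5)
Your proposal is correct and follows essentially the same route as the paper: invert \eqref{Milstein3}, apply the $4^{q-1}$ power-mean inequality, and bound each of the three increment moments via \eqref{markov2} together with the step-size bound \eqref{eqn:inc} (and, for the correction term, \eqref{C2}). The paper states these last estimates without detail, so your explicit interpolation for the diffusion term just fills in what the paper leaves implicit.
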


\begin{proof}
	Using \eqref{Milstein3}, we have for any $q>1$,
	\begin{align*}
	\bE \left[\left|\widehat{X}_{\underline{t}}\right|^q\right] 
	&\le 4^{q-1} \bigg(\bE\left[\left|\widehat{X}_t\right|^q\right] +\bE\left[\left|\mu(\widehat{X}_{\underline{t}})(t-\underline{t})\right|^q\right] +\bE \left[\left|\sm(\widehat{X}_{\underline{t}})(W_t-W_{\underline{t}})\right|^q\right] \\
	&\qquad+\bE \left[\left|\dfrac{1}{2}q_{\Delta}(\widehat{X}_{\underline{t}})\left((W_t-W_{\underline{t}})^2-(t-\underline{t})\right)\right|^q\right] \bigg).
	\end{align*}
	Using \eqref{eqn:inc} and  \eqref{C2}, we obtain \eqref{qh1}.
 \end{proof}

\begin{Lem}\label{lm5}
For any $p>0$, there exists a positive constant $C_p$ which does not depend on $\Delta$ such that, for any $t>0$, 
\begin{itemize}
\item[\textnormal{i)}] $\bE\left[\left|\widehat{X}_t- \widehat{X}_{\underline{t}}\right|^p \Big| \mf_{\underline{t}}\right] \leq C_p \Delta^{p/2},$
\item[\textnormal{ii)}]$\bE\left[\left|\widehat{X}_t- \widehat{X}_{\underline{t}}- \sm(\widehat{X}_{\underline{t}})(W_t- W_{\underline{t}})\right|^p \Big| \mf_{\underline{t}}\right] \leq C_p \Delta^{p}.$
\end{itemize}
\end{Lem}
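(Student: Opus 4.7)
The strategy is to apply the one-step representation \eqref{Milstein3}, which reads
\begin{equation*}
\widehat{X}_t - \widehat{X}_{\underline{t}} = \mu(\widehat{X}_{\underline{t}})(t-\underline{t}) + \sigma(\widehat{X}_{\underline{t}})(W_t - W_{\underline{t}}) + \tfrac{1}{2} q_{\Delta}(\widehat{X}_{\underline{t}})\bigl((W_t - W_{\underline{t}})^2 - (t-\underline{t})\bigr),
\end{equation*}
and to bound each of the three summands in conditional $L^p$ separately, using the elementary inequality $|a+b+c|^p \le 3^{(p-1)\vee 0}(|a|^p + |b|^p + |c|^p)$. For the Brownian factors, the strong Markov property implies that conditional on $\mf_{\underline{t}}$ the increment $W_t - W_{\underline{t}}$ is Gaussian with variance $t - \underline{t}$, so
\begin{equation*}
\bE\bigl[|W_t - W_{\underline{t}}|^p \big| \mf_{\underline{t}}\bigr] = C_p (t-\underline{t})^{p/2}, \quad \bE\bigl[\bigl|(W_t-W_{\underline{t}})^2 - (t-\underline{t})\bigr|^p \big| \mf_{\underline{t}}\bigr] = C_p (t-\underline{t})^p,
\end{equation*}
the second identity following from $(W_t-W_{\underline{t}})^2 - (t-\underline{t}) \stackrel{d}{=} (t-\underline{t})(Z^2-1)$ with $Z \sim N(0,1)$.

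The key inputs are then the step-size inequalities that follow from \eqref{eqn:inc}: specialising with $j_1 = 1/2$, with $j_3 = 1/4$, and with $j_5 = 1/2$ (all other $j_i = 0$) gives, respectively,
\begin{equation*}
|\mu(\widehat{X}_{\underline{t}})|^2 (t-\underline{t}) \le h_0 \Delta, \qquad |\sigma(\widehat{X}_{\underline{t}})|^2 (t-\underline{t}) \le h_0 \Delta, \qquad |q_\Delta(\widehat{X}_{\underline{t}})| (t-\underline{t}) \le h_0 \Delta,
\end{equation*}
together with the trivial bound $t - \underline{t} \le h_0 \Delta$. For part i), the diffusion contribution is $|\sigma(\widehat{X}_{\underline{t}})|^p \bE[|W_t - W_{\underline{t}}|^p | \mf_{\underline{t}}] = C_p (|\sigma|^2(t-\underline{t}))^{p/2} \le C_p (h_0 \Delta)^{p/2}$; the drift splits as $|\mu|^p(t-\underline{t})^p = (|\mu|^2(t-\underline{t}))^{p/2}(t-\underline{t})^{p/2} \le (h_0\Delta)^p$; and the Milstein correction, after conditioning, is $C_p |q_\Delta|^p (t-\underline{t})^p \le C_p (h_0\Delta)^p$. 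Since $\Delta \in (0,1)$, the $\Delta^p$ terms are absorbed into $\Delta^{p/2}$, giving the claimed bound.

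Part ii) is a byproduct of the same calculation: removing the diffusion summand leaves only the drift and the Milstein correction, and both were just shown to be of conditional $L^p$-order $\Delta^p$, which yields the improved rate. There is no serious obstacle in this lemma; the whole computation is essentially forced by the design of the adaptive step-size rule \eqref{chooseh}, which is tailored precisely so that every product of coefficients with $(t-\underline{t})$ appearing in the one-step increment is $O(\Delta)$. The only book-keeping subtlety is switching the subadditivity constant between the cases $0 < p < 1$ and $p \ge 1$.
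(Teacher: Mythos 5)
Your proof is correct and follows essentially the same route as the paper: decompose the increment via \eqref{Milstein3}, use the conditional Gaussian moments of $W_t-W_{\underline{t}}$ given $\mf_{\underline{t}}$, and absorb the coefficients with the step-size bounds coming from \eqref{chooseh}/\eqref{eqn:inc}. The only cosmetic difference is that the paper handles $0<p<1$ by H\"older's inequality after proving the case $p\ge 1$, whereas you use the subadditivity constant $3^{(p-1)\vee 0}$ directly; both are fine.
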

\begin{proof}
i) For any $p \geq 1,$ observe that
\begin{align*}
|\widehat{X}_t- \widehat{X}_{\underline{t}}|^p\leq 3^{p-1}\left(|\mu(\widehat{X}_{\underline{t}})(t-\underline{t})|^p+|\sm(\widehat{X}_{\underline{t}})(W_t- W_{\underline{t}})|^p+\left|\dfrac{1}{2}q_{\Delta}(\widehat{X}_{\underline{t}})\left((W_t- W_{\underline{t}})^2-(t-\underline{t})\right)\right|^p\right).
\end{align*}
Then, using \eqref{markov2}, \eqref{C2}, and  \eqref{eqn:inc}  we get that for any $p  \geq 1$,
\begin{align*}
\bE\left[ |\widehat{X}_t- \widehat{X}_{\underline{t}}|^p| \mf_{\underline{t}}\right] &\leq 3^{p-1}\left(|\mu(\widehat{X}_{\underline{t}})(t-\underline{t})|^p + C_p|\sm(\widehat{X}_{\underline{t}})|^p(t-\underline{t})^{p/2}+C_p|q_{\Delta}(\widehat{X}_{\underline{t}})(t-\underline{t})|^p\right)  \leq C_p \Delta^{p/2},
\end{align*}
which shows \textnormal{i)} for any $p \geq 1.$
For $0<p<1$, it suffices to use H\"older's inequality. Thus, the result follows.

ii) For any $p \geq 1,$ note that
\begin{align*}
\left|\widehat{X}_t- \widehat{X}_{\underline{t}}-\sm(\widehat{X}_{\underline{t}})(W_t- W_{\underline{t}})\right|^p\leq 2^{p-1}\left(|\mu(\widehat{X}_{\underline{t}})(t-\underline{t})|^p+\Big|\dfrac{1}{2}q_{\Delta}(\widehat{X}_{\underline{t}})\left((W_t- W_{\underline{t}})^2-(t-\underline{t})\right)\Big|^p\right).
\end{align*}
Then,  using \eqref{markov2} and  \eqref{eqn:inc},   we get
\begin{align*}
\bE\left[\left|\widehat{X}_t- \widehat{X}_{\underline{t}}- \sm(\widehat{X}_{\underline{t}})(W_t- W_{\underline{t}})\right|^p | \mf_{\underline{t}}\right]& \leq 2^{p-1}\left(|\mu(\widehat{X}_{\underline{t}})(t-\underline{t})|^p + C_p|q_{\Delta}(\widehat{X}_{\underline{t}})(t-\underline{t})|^p\right) \leq C_p \Delta^p,
\end{align*}
which shows \textnormal{ii)} for any $p \geq 1.$
For $0<p<1$, using H\"older's inequality suffices. Thus, the result follows.
\end{proof}

\begin{Lem}\label{lm7}
Suppose that  $l_0\geq \frac{4l}{3(1+\alpha)}$. For any $p>0$, there exists a positive constant $C$ such that, for any $t \geq 0$,
$$
 \bE\left[ \left| \sm(\widehat{X}_t)- \sm( \widehat{X}_{\underline{t}})-q_{\Delta}(\widehat{X}_{\underline{t}})(W_t- W_{\underline{t}})\right|^p|\mf_{\underline{t}}\right] \leq   C\Delta^{\frac{(1+\alpha)p}{2}}. $$
\end{Lem}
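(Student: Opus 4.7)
The plan is to Taylor expand $\sigma(\widehat{X}_t)$ around $\widehat{X}_{\underline{t}}$, substitute the Milstein increment from \eqref{Milstein3} for $\widehat{X}_t - \widehat{X}_{\underline{t}}$, and estimate each resulting piece using the adaptive step size. The crucial ingredient is that, by the definition of $h$ in \eqref{chooseh}, for every $k\geq 0$ each of $|\mu(\widehat{X}_{\underline{t}})|^{2k}(t-\underline{t})^{k/2}$, $|\sigma(\widehat{X}_{\underline{t}})|^{4k}(t-\underline{t})^{k/2}$, $|\sigma'(\widehat{X}_{\underline{t}})|^{4k}(t-\underline{t})^{k/2}$, $|q_\Delta(\widehat{X}_{\underline{t}})|^{2k}(t-\underline{t})^{k/2}$ and $|\widehat{X}_{\underline{t}}|^{l_0 k}(t-\underline{t})^{k/2}$ is bounded by $(h_0\Delta)^{k/2}$, and these bounds combine multiplicatively.

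Concretely, I decompose
\[ \sigma(\widehat{X}_t) - \sigma(\widehat{X}_{\underline{t}}) - q_\Delta(\widehat{X}_{\underline{t}})(W_t - W_{\underline{t}}) = I_1 + I_2 + I_3 + I_4, \]
where $I_1 = \sigma(\widehat{X}_t) - \sigma(\widehat{X}_{\underline{t}}) - \sigma'(\widehat{X}_{\underline{t}})(\widehat{X}_t - \widehat{X}_{\underline{t}})$ is the Taylor remainder, $I_2 = \sigma'(\widehat{X}_{\underline{t}})\mu(\widehat{X}_{\underline{t}})(t-\underline{t})$, $I_3 = (\sigma(\widehat{X}_{\underline{t}})\sigma'(\widehat{X}_{\underline{t}}) - q_\Delta(\widehat{X}_{\underline{t}}))(W_t - W_{\underline{t}})$ is the taming error, and $I_4 = \frac{1}{2}\sigma'(\widehat{X}_{\underline{t}}) q_\Delta(\widehat{X}_{\underline{t}})\bigl((W_t - W_{\underline{t}})^2 - (t-\underline{t})\bigr)$ is the second-order Milstein contribution. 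Using $|I_1+I_2+I_3+I_4|^p \leq 4^{p-1}\sum_{j=1}^4 |I_j|^p$, I bound $\bE[|I_j|^p \mid \mf_{\underline{t}}]$ separately.

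For $I_2$ and $I_4$, the step-size inequalities let me extract enough $\Delta$-powers to obtain $C\Delta^p$ pathwise (the centered bracket in $I_4$ is handled via \eqref{markov2}), which is $\leq C\Delta^{p(1+\alpha)/2}$ since $\alpha \leq 1$ and $\Delta < 1$. For $I_3$ I invoke \eqref{U8} to write $|\sigma\sigma' - q_\Delta|^p \leq L^p \Delta^{p/2}|\sigma\sigma'|^{2p}$, multiply by $\bE[|W_t-W_{\underline{t}}|^p \mid \mf_{\underline{t}}] \leq C(t-\underline{t})^{p/2}$, and combine $|\sigma(\widehat{X}_{\underline{t}})|^{2p}(t-\underline{t})^{p/4} \leq C\Delta^{p/4}$ with the analogous bound for $|\sigma'(\widehat{X}_{\underline{t}})|^{2p}$ to conclude $\bE[|I_3|^p \mid \mf_{\underline{t}}] \leq C\Delta^p$.

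The main obstacle is $I_1$. By \eqref{C^1+alpha}, $|I_1|^p \leq C(1+|\widehat{X}_t|^l + |\widehat{X}_{\underline{t}}|^l)^p |\widehat{X}_t - \widehat{X}_{\underline{t}}|^{p(1+\alpha)}$; splitting $|\widehat{X}_t|^l \leq C(|\widehat{X}_{\underline{t}}|^l + |\widehat{X}_t - \widehat{X}_{\underline{t}}|^l)$ reduces this to a piece $|\widehat{X}_t - \widehat{X}_{\underline{t}}|^{p(1+\alpha+l)}$, controlled by Lemma \ref{lm5}(i) as $C\Delta^{p(1+\alpha+l)/2}$, plus the crux $|\widehat{X}_{\underline{t}}|^{pl}|\widehat{X}_t - \widehat{X}_{\underline{t}}|^{p(1+\alpha)}$. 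Expanding the increment via \eqref{Milstein3}, the dominant summand of the latter is $|\widehat{X}_{\underline{t}}|^{pl}|\sigma(\widehat{X}_{\underline{t}})|^{p(1+\alpha)}(t-\underline{t})^{p(1+\alpha)/2}$ (the $\mu$- and $q_\Delta$-summands yield larger powers of $\Delta$). I bound this via the multiplicative step-size estimate $|\widehat{X}_{\underline{t}}|^{l_0 a}|\sigma(\widehat{X}_{\underline{t}})|^{4b}(t-\underline{t})^{(a+b)/2} \leq C\Delta^{(a+b)/2}$ with $a = pl/l_0$ and $b = p(1+\alpha)/4$; the leftover factor $(t-\underline{t})^{p(1+\alpha)/2 - (a+b)/2}$ is a non-negative power of $(t-\underline{t}) \leq h_0\Delta$ precisely when $\frac{pl}{2l_0} + \frac{p(1+\alpha)}{8} \leq \frac{p(1+\alpha)}{2}$, i.e.\ $l_0 \geq \frac{4l}{3(1+\alpha)}$, which is exactly our hypothesis. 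Assembling the four contributions yields the claimed bound $C\Delta^{p(1+\alpha)/2}$.
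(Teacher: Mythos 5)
Your proof is correct and follows essentially the same route as the paper: the paper decomposes the error into the Taylor remainder $J_{1t}$, the term $\sigma'(\widehat{X}_{\underline{t}})\bigl(\widehat{X}_t-\widehat{X}_{\underline{t}}-\sigma(\widehat{X}_{\underline{t}})(W_t-W_{\underline{t}})\bigr)$ (which is exactly your $I_2+I_4$), and the taming error $J_{3t}=I_3$, then estimates each piece using \eqref{C^1+alpha}, \eqref{U8}, \eqref{markov2} and the same step-size bounds $|\widehat{X}_{\underline{t}}|\leq(h_0\Delta/(t-\underline{t}))^{1/2l_0}$, $|\sigma(\widehat{X}_{\underline{t}})|\leq(h_0\Delta/(t-\underline{t}))^{1/8}$, with the hypothesis $l_0\geq\frac{4l}{3(1+\alpha)}$ entering at precisely the same point. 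The only (cosmetic) differences are that you pre-expand the middle term into its $\mu$- and $q_\Delta$-parts and omit the one-line reduction to $p>1$ via H\"older's inequality.
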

\begin{proof}
Thanks to H\"older's inquality, it is sufficient to proof Lemma \ref{lm7} for $p>1$. Suppose that $p>1$. We have 
\begin{align}\label{smsm'q}
\left| \sm(\widehat{X}_t)- \sm( \widehat{X}_{\underline{t}})-q_{\Delta}(\widehat{X}_{\underline{t}})(W_t- W_{\underline{t}})\right|  \leq J_{1t}+J_{2t}+J_{3t},
\end{align}
where
\begin{align*}
&J_{1t}= | \sm(\widehat{X}_t)- \sm( \widehat{X}_{\underline{t}})-\sm'(\widehat{X}_{\underline{t}})(\widehat{X}_t- \widehat{X}_{\underline{t}})|, \\
&J_{2t}=|\sm'(\widehat{X}_{\underline{t}})||\widehat{X}_t- \widehat{X}_{\underline{t}}- \sm(\widehat{X}_{\underline{t}})(W_t- W_{\underline{t}})|,\\
&J_{3t}=|\sm'(\widehat{X}_{\underline{t}})\sm( \widehat{X}_{\underline{t}})-q_{\Delta}(\widehat{X}_{\underline{t}})||W_t- W_{\underline{t}}|.
\end{align*}
   Using the estimate in \eqref{C^1+alpha} and  \eqref{U6}, there exists a constant $C_1$ such that 
   \begin{align*}
   J_{1t} &\leq C_1 (1+|\widehat{X}_t|^{l}+|\widehat{X}_{\underline{t}}|^{l})|\widehat{X}_t- \widehat{X}_{\underline{t}}|^{1+\alpha}. 
\end{align*}
Using the fact that  $|\widehat{X}_t| \leq |\widehat{X}_{\underline{t}}| + |\widehat{X}_t -\widehat{X}_{\underline{t}}|$, there exists a constant $C_2$ such that 
  \begin{align}\label{J1}
   J_{1t}^p \leq  C_2 \left(|\widehat{X}_t- \widehat{X}_{\underline{t}}|^{p(1+\alpha+l)}+(1+|\widehat{X}_{\underline{t}}|^{pl})|\widehat{X}_t- \widehat{X}_{\underline{t}}|^{p(1+\alpha)}\right).
   \end{align}
   Now, the equation \eqref{Milstein3} yields to
   \begin{align*}
   &\left|\widehat{X}_t- \widehat{X}_{\underline{t}}\right|^{p(1+\alpha)}\\
   &\leq 3^{p(1+\alpha)-1}\left(|\mu(\widehat{X}_{\underline{t}})(t-\underline{t})|^{p(1+\alpha)}+|\sm(\widehat{X}_{\underline{t}})(W_t- W_{\underline{t}})|^{p(1+\alpha)}+|q_{\Delta}(\widehat{X}_{\underline{t}})((W_t- W_{\underline{t}})^2- (t-\underline{t}))|^{p(1+\alpha)}\right).
   \end{align*}
   Note that $|\widehat{X}_{\underline{t}}| \leq \Big( \frac{h_0\Delta}{t-\underline{t}}\Big)^{1/2l_0}$, $|\mu(\widehat{X}_{\underline{t}})| \leq \Big( \frac{h_0 \Delta}{t-\underline{t}}\Big)^{1/4}$, and  $|\sigma(\widehat{X}_{\underline{t}})| \leq \Big( \frac{h_0 \Delta}{t-\underline{t}}\Big)^{1/8}$. Using these estimates and  \eqref{markov2}, we get 
 \begin{align}\label{J11}
\bE\left[\left|\widehat{X}_t- \widehat{X}_{\underline{t}}\right|^{p(1+\alpha)}(1+|\widehat{X}_{\underline{t}}|^{pl}) \Big| \mf_{\underline{t}}\right] \leq C_3 \Delta^{\frac{(1+\alpha)p}{2}},
\end{align}
for some positive constant $C_3$, provided that $l_0 \geq \frac{4l}{3(1+\alpha)}$. 
Therefore, combining \eqref{J1}, \eqref{J11} and  Lemma \ref{lm5}, we have 
\begin{align}\label{EJ1}
\bE[J_{1t}^p|\mf_{\underline{t}}] \leq  C_4 \left( \Delta^{\frac{(1+\alpha+l)p}{2}}+\Delta^{\frac{(1+\alpha)p}{2}}\right)\leq  2C_4 \Delta^{\frac{(1+\alpha)p}{2}},
\end{align}
for some positive constant $C_4$.
Next,  the equation \eqref{Milstein3} yields to
\begin{align*}
J_{2t}^p \leq 2^{p-1}\left(\left|\sm'(\widehat{X}_{\underline{t}})\mu(\widehat{X}_{\underline{t}})(t- \underline{t})\right|^p+\left|\sm{'}(\widehat{X}_{\underline{t}})q_{\Delta}(\widehat{X}_{\underline{t}})\left((W_t- W_{\underline{t}})^2-(t- \underline{t})\right)\right|^p\right).
\end{align*}
Then, applying \eqref{markov2} and  $\max\{ |\sm'(\widehat{X}_{\underline{t}})\mu(\widehat{X}_{\underline{t}})(t- \underline{t})|, |\sm'(\widehat{X}_{\underline{t}})q_{\Delta}(\widehat{X}_{\underline{t}})(t- \underline{t})|\} \leq C_0\Delta,$ for some positive constant $C_0$, there exists a constant $C_5$ such that, 
\begin{align}\label{EJ2}
\sup_{t\geq0} \bE[J_{2t}^p|\mf_{\underline{t}}] \leq C_5 \Delta^p.
\end{align}
Now, \eqref{U8} yields that
$J_{3t}^p \leq L^p \Delta^{p/2}\left|\sm'(\widehat{X}_{\underline{t}})\sm( \widehat{X}_{\underline{t}})\right|^{2p}\left|W_t- W_{\underline{t}}\right|^p.$
This, combined with \eqref{markov2} and the fact that $\max\lbrace |\sm'(\widehat{X}_{\underline{t}})\sm( \widehat{X}_{\underline{t}})|^4(t- \underline{t})\rbrace \leq C_0\Delta$, implies that
\begin{align}\label{EJ3}
\sup_{t \geq0} \bE[J_{3t}^p|\mf_{\underline{t}}] \leq C_6 \Delta^p,
\end{align}
for some positive constant $C_6$. Consequently, putting \eqref{smsm'q}, \eqref{EJ1}, \eqref{EJ2} and \eqref{EJ3} together, we obtain that 
   for any $p\geq 1$ and $l_0\geq \frac{4l}{3(1+\alpha)},$ there exists a positive $C$ such that 
\begin{align*}
 \sup_{t \geq 0} \bE\left[ \left| \sm(\widehat{X}_t)- \sm( \widehat{X}_{\underline{t}})-q_{\Delta}(\widehat{X}_{\underline{t}})(W_t- W_{\underline{t}})\right|^p|\mf_{\underline{t}}\right] 
 & \leq  C\Delta^{\frac{(1+\alpha)p}{2}}\textcolor{black}{,}
 \end{align*}
 which finishes the proof. 
 \end{proof}

\subsection{Estimates on the increment of the exact solution}
In this section, we will give some estimates on the difference between $X_t$ and $X_{\underline{t}}$. 
\begin{Lem}\label{lm1}
For any $p \in [2, p_0]$, there exists a positive constant $C= C(p, \gamma, \eta)$ such that for any $t \geq 0,$
\begin{align*} 
\bE[\vert X_t\vert ^p| \mf_{\underline{t}}] \leq |X_{\underline{t}}|^p+ C(t- \underline{t})\left(1+|X_{\underline{t}}|^p\right).
\end{align*} 
\end{Lem}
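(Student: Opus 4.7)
The plan is to apply It\^o's formula to $|X_s|^p$ on the random interval $[\underline{t},t]$. Since $p\geq 2$, the function $x\mapsto |x|^p$ is $C^2$ with derivatives $p\,\mathrm{sgn}(x)|x|^{p-1}$ and $p(p-1)|x|^{p-2}$, so grouping drift and It\^o correction gives
\begin{equation*}
|X_t|^p = |X_{\underline{t}}|^p + \int_{\underline{t}}^t p|X_s|^{p-2}\Bigl(X_s\mu(X_s)+\tfrac{p-1}{2}\sigma^2(X_s)\Bigr)ds + M_t,
\end{equation*}
where $M_t$ is a local martingale (stochastic integral of $p\,\mathrm{sgn}(X_s)|X_s|^{p-1}\sigma(X_s)$). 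Since $p\leq p_0$, assumption \textbf{A1} gives $X_s\mu(X_s)+\tfrac{p-1}{2}\sigma^2(X_s)\leq \gamma X_s^2+\eta$, so the integrand is bounded by $p|X_s|^{p-2}(\gamma X_s^2+\eta)\leq p|\gamma||X_s|^p + p\eta|X_s|^{p-2}$. Using the elementary bound $|X_s|^{p-2}\leq 1+|X_s|^p$ (trivial when $p=2$, from Young's inequality when $p>2$), the integrand is controlled by $C_1(1+|X_s|^p)$ for a constant $C_1=C_1(p,\gamma,\eta)$.

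Next I would localize the local martingale by a sequence of stopping times $\tau_n\uparrow\infty$ reducing $M$, apply the conditional expectation $\bE[\cdot\mid \mf_{\underline{t}}]$ (well-defined since $\underline{t}$ is a stopping time), and pass to the limit via Fatou's lemma, using Proposition \ref{moment nghiem dung} to guarantee the integrability of $|X_s|^p$ uniformly on bounded intervals. This yields, for $\phi(s):=\bE[|X_s|^p\mid \mf_{\underline{t}}]$,
\begin{equation*}
\phi(t)\leq |X_{\underline{t}}|^p + C_1(t-\underline{t}) + C_1\int_{\underline{t}}^t \phi(s)\,ds.
\end{equation*}
A standard (shifted) Gronwall argument then gives $\phi(t)\leq (|X_{\underline{t}}|^p+1)e^{C_1(t-\underline{t})}-1$.

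The final step is to convert the exponential bound into the advertised linear form. The essential observation is that $\underline{t}$ is a gridpoint of the adaptive scheme and $t-\underline{t}\leq h(\widehat{X}_{\underline{t}})\Delta \leq h_0\Delta \leq h_0$, since the stepsize function satisfies $h(x)\leq h_0$ by \eqref{chooseh}. Hence $e^{C_1(t-\underline{t})}\leq 1 + C_1(t-\underline{t})e^{C_1h_0}$, and substituting this into the Gronwall bound produces exactly $|X_{\underline{t}}|^p + C(t-\underline{t})(1+|X_{\underline{t}}|^p)$ for $C=C(p,\gamma,\eta)$ (absorbing $e^{C_1h_0}$ and the $h_0$-dependent constants into $C$).

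The main obstacle I foresee is justifying the conditional expectation step cleanly: one must confirm that the localized martingale has vanishing conditional expectation and that the growth conditions permit the Fatou passage and Gronwall inequality to be applied \emph{conditionally} on $\mf_{\underline{t}}$. Once this is set up, the algebraic manipulations (Young's inequality, the shifted Gronwall, and the exponential-to-linear conversion using the uniform bound $t-\underline{t}\leq h_0$) are routine.
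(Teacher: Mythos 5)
Your proposal is correct and follows essentially the same route as the paper: It\^o's formula for $|X|^p$ on $[\underline{t},t]$, assumption \textbf{A1} plus Young's inequality to control the drift, localization and Fatou to handle the local martingale, and the bound $t-\underline{t}\le h_0\Delta$ together with $e^{x}\le 1+xe^{x}$ to linearize the resulting exponential. The only cosmetic difference is that the paper works with the integrating factor $e^{-at}|X_t|^p$ for the specific choice $a=\gamma p+(p-2)\eta$, which makes the drift integrand bounded by the constant $2\eta$ and so avoids invoking Gronwall explicitly, whereas you keep $|X_t|^p$ unweighted and run a (conditional) Gronwall argument; both yield the same linear-in-$(t-\underline{t})$ bound with a constant that, as in the paper, also absorbs the scheme parameter $h_0$.
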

\begin{proof}
Recall that for $t>0$,
\begin{align}\label{dynamic eq}
X_{t} = X_{\underline{t}}+ \int_{\underline{t}}^t \mu(X_s)ds+\int_{\underline{t}}^t \sm(X_s)dW_s.
\end{align}
For each $N>0$, let $\tau_N = \inf\{t \geq 0: |X_t| \geq N\}.$ It is clear that $\tau_N \uparrow +\infty$ as $N \to \infty$.  Using It\^o's formula, we have that for any $a \in \mathbb{R}$, 
\begin{align*}
e^{-a(t \wedge \tau_N)}|X_{t\wedge \tau_N}|^p&=e^{-a(\underline{t} \wedge \tau_N)} |X_{\underline{t} \wedge \tau_N}|^p+\int_{\underline{t}\wedge \tau_N}^{t\wedge \tau_N} e^{-as}\left[-a|X_s|^p + p|X_s|^{p-2}\Big(X_s \mu(X_s)+ \dfrac{p-1}{2}\sm^2(X_s)\Big)\right]ds \\
&\quad+ p \int_{\underline{t}\wedge \tau_N}^{t\wedge \tau_N} e^{-as}|X_s|^{p-2} X_s \sm(X_s)dW_s.
\end{align*}
Thanks to \textbf{A1}, we have 
\begin{align*}
-a|X_s|^p + p|X_s|^{p-2}\Big(X_s \mu(X_s)+ \dfrac{p-1}{2}\sm^2(X_s)\Big) \leq  (-a+ \gamma p)|X_s|^p + \eta p|X_s|^{p-2}.
\end{align*}
Using Young's inequality, $|X_s|^{p-2} \leq \frac{p-2}{p}|X_s|^p+\frac{2}{p}$. Set   $a=\gamma p+(p-2) \eta$, we get 
$(-a+ \gamma p)|X_s|^p + \eta p|X_s|^{p-2} \leq  2\eta.$ Furthermore, 
$ \int_{\underline{t}\wedge \tau_N}^{t\wedge \tau_N} e^{-as} ds = \int_{\underline{t}}^{t} e^{-at}e^{-a(s-t)} 1_{\{ s \leq \tau_N\}}ds \leq e^{-at} e^{h_0 |a|\Delta}(t - \underline{t}).$
Therefore, 
\begin{align*}
e^{-a(t\wedge \tau_N)}|X_{t\wedge \tau_N}|^p &\leq e^{-a(\underline{t}\wedge \tau_N)} |X_{\underline{t}\wedge \tau_N}|^p
+2\eta e^{-at} e^{h_0 |a|\Delta}(t - \underline{t})
+p\int_{\underline{t}}^{t} e^{-as}|X_s|^{p-2} X_s \sm(X_s) 1_{\{s \leq \tau_N\}}dW_s.
\end{align*}
Taking the conditional expectation of both sides with respect to $\mf_{\underline{t}}$, we get 
\begin{align*}
\bE \left[ e^{-a(t\wedge \tau_N)}|X_{t\wedge \tau_N}|^p | \mf_{\underline{t}} \right] \leq e^{-a(\underline{t}\wedge \tau_N)} |X_{\underline{t}\wedge \tau_N}|^p
+2\eta e^{-at} e^{h_0 |a|\Delta}(t - \underline{t}).
\end{align*}
Let $N \to \infty$ and apply Fatou's lemma, we get 
\begin{align*}
e^{-at}\bE \left[ |X_{t\wedge \tau_N}|^p | \mf_{\underline{t}} \right] \leq e^{-a\underline{t}} |X_{\underline{t}}|^p
+2\eta e^{-at} e^{h_0 |a|\Delta}(t - \underline{t}).
\end{align*}
Now, using the estimate $0\leq  t - \underline{t} \leq  h_0\Delta$ and the inequality  $e^{x} \leq 1+x e^{x}$ valid for all $x \in \bR$, we get 
\begin{align*}
\bE\left[|X_t|^p| \mf_{\underline{t}}\right] &\leq (1 + |a|(t - \underline{t})e^{h_0|a|\Delta})|X_{\underline{t}}|^p + 2|\eta|e^{h_0 |a|\Delta} (t - \underline{t})\\
& \leq |X_{\underline{t}}|^p + (|a|+2|\eta|)e^{h_0|a|} (t-\underline{t})(|X_{\underline{t}}|^p+ 1),
\end{align*}
which is the desired result.
\end{proof}

\begin{Lem}\label{lm2}
For any $p \in [2, \frac{p_0}{l + \alpha+1}]$, there exists a positive constant $C$   such that for any $ t \geq 0,$
$$
\left\vert \bE[ X_t - X_{\underline{t}}|\mf_{\underline{t}}]\right\vert^p \leq  C(t- \underline{t})^p\left(1+|X_{\underline{t}}|^{p(l + \alpha+1)}\right). $$
\end{Lem}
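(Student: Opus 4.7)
The plan is to use the SDE representation of $X_t - X_{\underline{t}}$, discard the martingale part after conditioning, and then bound the drift integral using the growth estimate \eqref{U4} together with the moment control from Lemma \ref{lm1}.

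First, I would write
\begin{align*}
X_t - X_{\underline{t}} = \int_{\underline{t}}^t \mu(X_s)\,ds + \int_{\underline{t}}^t \sigma(X_s)\,dW_s.
\end{align*}
Taking the conditional expectation with respect to $\mf_{\underline{t}}$, the stochastic integral vanishes (its integrand is in $L^2$ on $[\underline{t}, t]$ thanks to \eqref{U4} and Proposition \ref{moment nghiem dung} applied with exponent $2(l+\alpha+1) \leq p_0$), so
\begin{align*}
\bigl|\bE[X_t - X_{\underline{t}}|\mf_{\underline{t}}]\bigr| \leq \bE\left[\int_{\underline{t}}^t |\mu(X_s)|\,ds \,\Big|\, \mf_{\underline{t}}\right].
\end{align*}
Raising to the $p$-th power and applying Jensen's (or Hölder's) inequality with respect to the normalized Lebesgue measure on $[\underline{t},t]$ yields
\begin{align*}
\bigl|\bE[X_t - X_{\underline{t}}|\mf_{\underline{t}}]\bigr|^p \leq (t-\underline{t})^{p-1} \int_{\underline{t}}^t \bE\bigl[|\mu(X_s)|^p \,\big|\, \mf_{\underline{t}}\bigr]\,ds.
\end{align*}

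Next, I would use the growth bound \eqref{U4} to get $|\mu(X_s)|^p \leq C_p(1 + |X_s|^{p(l+\alpha+1)})$, and then apply Lemma \ref{lm1} with exponent $p(l+\alpha+1)$. The hypothesis $p \leq p_0/(l+\alpha+1)$ is exactly what guarantees $p(l+\alpha+1) \in [2, p_0]$, so Lemma \ref{lm1} gives
\begin{align*}
\bE\bigl[|X_s|^{p(l+\alpha+1)} \,\big|\, \mf_{\underline{t}}\bigr] \leq |X_{\underline{t}}|^{p(l+\alpha+1)} + C(s-\underline{t})\bigl(1 + |X_{\underline{t}}|^{p(l+\alpha+1)}\bigr).
\end{align*}
Since $s - \underline{t} \leq h_0 \Delta \leq h_0$ is bounded, this gives $\bE[|\mu(X_s)|^p|\mf_{\underline{t}}] \leq C(1+|X_{\underline{t}}|^{p(l+\alpha+1)})$ uniformly in $s \in [\underline{t}, t]$.

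Finally, integrating over $s \in [\underline{t}, t]$ contributes another factor of $(t - \underline{t})$, producing the desired bound
\begin{align*}
\bigl|\bE[X_t - X_{\underline{t}}|\mf_{\underline{t}}]\bigr|^p \leq C(t-\underline{t})^p\bigl(1 + |X_{\underline{t}}|^{p(l+\alpha+1)}\bigr).
\end{align*}
I do not anticipate a serious obstacle here: the argument is essentially a conditional Jensen/Hölder estimate combined with the a priori moment bound from Lemma \ref{lm1}. The only care needed is to verify the integrability required to kill the martingale term (which follows from the moment estimate with exponent $2(l+\alpha+1) \leq p_0$) and to check the exponent condition $p(l+\alpha+1) \leq p_0$ in Lemma \ref{lm1}, which is exactly the hypothesis of the statement.
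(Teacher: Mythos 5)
Your proposal is correct and follows essentially the same route as the paper: write $X_t - X_{\underline{t}}$ via the SDE, kill the martingale part under conditioning, apply Jensen/H\"older to get the factor $(t-\underline{t})^{p-1}$, bound $|\mu(X_s)|^p$ by \eqref{U4}, and invoke Lemma \ref{lm1} with exponent $p(l+\alpha+1)\le p_0$ (noting $\underline{s}=\underline{t}$ for $s\in(\underline{t},t)$). The only cosmetic difference is that the paper applies Jensen in the form $|\bE[\cdot\,|\mf_{\underline{t}}]|^p\le\bE[|\cdot|^p\,|\mf_{\underline{t}}]$ directly to the drift integral, whereas you first pass the absolute value inside; both yield the same intermediate bound.
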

\begin{proof}
Using \eqref{dynamic eq}, Jensen's inequality and  \eqref{U4},  we get that for any $p \geq 2,$
\begin{align*}
\left|\bE[X_t- X_{\underline{t}}| \mf_{\underline{t}}]\right|^p &=\left|\bE\left[\int_{\underline{t}}^t \mu(X_s)ds| \mf_{\underline{t}}\right]\right|^p \leq \bE \left[\left| \int_{\underline{t}}^t \mu(X_s)ds \right| ^p \Big| \mf_{\underline{t}}\right]\\
& \leq (t-\underline{t})^{p-1}\bE\left[\int_{\underline{t}}^t |\mu(X_s)|^p ds| \mf_{\underline{t}}\right]\\
&\leq 2^{p-1}L^p(t-\underline{t})^{p-1}\int_{\underline{t}}^t \left(1+\bE\left[\left|X_s\right|^{p(l + \alpha+1)}| \mf_{\underline{s}}\right]\right)ds.
\end{align*}
Thus, Lemma  \ref{lm1} deduces that for any $p \in [2, \frac{p_0}{l + \alpha+1}]$, there exist positive constants $C_1, C_2$ such that 
\begin{align*}
\left|\bE[X_t- X_{\underline{t}}| \mf_{\underline{t}}]\right|^p& \leq 2^{p-1}L^p (t-\underline{t})^{p-1}\int_{\underline{t}}^t \left(1+\left|X_{\underline{s}}\right|^{p(l + \alpha+1)}+C_1 (s-\underline{s})(1+\left|X_{\underline{s}}\right|^{p(l + \alpha+1)})\right)ds\\
& \leq 2^{p-1}L^p(t-\underline{t})^{p-1}\left((t-\underline{t})(1+\left|X_{\underline{t}}\right|^{p(l + \alpha+1)})+\dfrac{1}{2}C_1(t-\underline{t})^2(1+\left|X_{\underline{t}}\right|^{p(l + \alpha+1)})\right)\\
&\leq C_2 (t-\underline{t})^p\left(1+\left|X_{\underline{t}}\right|^{p(l + \alpha+1)}\right).
\end{align*}
Thus, the result follows.
\end{proof}
\begin{Lem}\label{lm3}
For any $p\in[2, \frac{p_0}{l + \alpha+1}]$, there exists a positive constant $C$ such that, for any $t \geq 0,$ 
\begin{align*}
 \bE[ \vert X_t - X_{\underline{t}}\vert^p|\mf_{\underline{t}}]\vert \leq  
 C(t- \underline{t})^{p/2}
 \left(1+|X_{\underline{t}}|^{p(l + \alpha+1)}\right). 
 \end{align*}
\end{Lem}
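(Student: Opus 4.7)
The plan is to decompose the increment using the SDE \eqref{dynamic eq}, estimate the drift and diffusion integrals separately by a standard Hölder/BDG argument, and then convert the resulting moment bounds of $\mu(X_s)$ and $\sigma(X_s)$ into bounds involving $X_{\underline{t}}$ via the growth estimate \eqref{U4} and Lemma \ref{lm1}.

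More concretely, I would start by writing
\begin{align*}
X_t - X_{\underline{t}} = \int_{\underline{t}}^t \mu(X_s)\,ds + \int_{\underline{t}}^t \sigma(X_s)\,dW_s,
\end{align*}
so that
\begin{align*}
|X_t - X_{\underline{t}}|^p \le 2^{p-1}\Bigl|\int_{\underline{t}}^t \mu(X_s)\,ds\Bigr|^p + 2^{p-1}\Bigl|\int_{\underline{t}}^t \sigma(X_s)\,dW_s\Bigr|^p.
\end{align*}
Apply Hölder's inequality to the drift term to obtain the factor $(t-\underline{t})^{p-1}\int_{\underline{t}}^t|\mu(X_s)|^p ds$, and the Burkholder–Davis–Gundy inequality to the stochastic integral to obtain a constant times $\bE[(\int_{\underline{t}}^t|\sigma(X_s)|^2 ds)^{p/2}|\mf_{\underline{t}}]$, which by Hölder is bounded by $C(t-\underline{t})^{p/2-1}\int_{\underline{t}}^t \bE[|\sigma(X_s)|^p|\mf_{\underline{t}}]ds$.

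Next, I invoke the polynomial growth estimate \eqref{U4} to conclude that
\begin{align*}
|\mu(X_s)|^p \vee |\sigma(X_s)|^p \le 2^{p-1}L^p\bigl(1 + |X_s|^{p(l+\alpha+1)}\bigr),
\end{align*}
and then use Lemma \ref{lm1} with the exponent $p(l+\alpha+1)\in[2,p_0]$ (permitted because of the hypothesis $p\in[2,p_0/(l+\alpha+1)]$) together with the tower property to get $\bE[|X_s|^{p(l+\alpha+1)}|\mf_{\underline{t}}] \le C(1+|X_{\underline{t}}|^{p(l+\alpha+1)})$ for $s\in[\underline{t},t]$. Putting these pieces together yields a drift contribution bounded by $C(t-\underline{t})^p(1+|X_{\underline{t}}|^{p(l+\alpha+1)})$ and a diffusion contribution bounded by $C(t-\underline{t})^{p/2}(1+|X_{\underline{t}}|^{p(l+\alpha+1)})$. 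Since $t-\underline{t}\le h_0\Delta\le h_0$, the drift term is dominated by the diffusion term up to a multiplicative constant, giving the claimed bound.

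There is no real obstacle here; the argument is routine once one has Lemma \ref{lm1}. The only points requiring mild care are (i) justifying the use of Lemma \ref{lm1} to pass from the conditional moments at an intermediate time $s\in[\underline{t},t]$ to a moment in terms of $X_{\underline{t}}$, for which one notes that for such $s$ one has $\underline{s}=\underline{t}$, and (ii) checking the range of admissible $p$, which is exactly what is needed so that Lemma \ref{lm1} can be applied with exponent $p(l+\alpha+1)\le p_0$.
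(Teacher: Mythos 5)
Your proposal is correct and follows essentially the same route as the paper's own proof: the same decomposition into drift and diffusion integrals, H\"older plus Burkholder for the two terms, the growth bound \eqref{U4}, and Lemma \ref{lm1} applied with exponent $p(l+\alpha+1)\le p_0$ after observing that $\underline{s}=\underline{t}$ for $s\in(\underline{t},t)$. No substantive differences.
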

\begin{proof}
The equation \eqref{dynamic eq} yields that for any $ p \geq 2$,
\begin{align}\label{dix}
\left| X_t - X_{\underline{t}}\right|^p \leq 2^{p-1}\left( \left|\int_{\underline{t}}^t \mu(X_s)ds \right| ^p+\left|\int_{\underline{t}}^t \sm(X_s)dW_s \right| ^p\right).
\end{align} 
Since $p < \frac{p_0}{l + \alpha+1}$,  using the argument in the proof of Lemma \ref{lm2}, we have 
\begin{align}\label{Eb1}
 \bE \left[\left| \int_{\underline{t}}^t \mu(X_s)ds \right| ^p| \mf_{\underline{t}}\right] \leq  C(t-\underline{t})^p\left(1+\left|X_{\underline{t}}\right|^{p(l + \alpha+1)}\right).
\end{align}
Moreover, using Burkholder's inequality, there exists a positive constant $c_p$ such that 
\begin{align*}
 \bE \left[\left| \int_{\underline{t}}^t \sigma(X_s)dW_s \right| ^p| \mf_{\underline{t}}\right] \leq  c_p (t-\underline{t})^{p/2-1} \int_{\underline{t}}^t \mathbb{E}\left[|\sigma(X_s)|^p |  \mf_{\underline{t}}\right] ds. 
\end{align*}
If $\underline{t} < s < t$, then $\underline{s} = \underline{t}$. Thanks to \eqref{U4} and Lemma \ref{lm1}, there exists a positive constant $C_1$ such that 
\begin{align*}
    \mathbb{E}\left[|\sigma(X_s)|^p |  \mf_{\underline{t}}\right] 
    &\leq L^p 2^{p-1} \left(1+ \bE\left[|X_s|^{p(l + \alpha + 1)}| \mf_{\underline{s}}\right] \right)\\
    &\leq  L^p 2^{p-1} \left(1+ |X_{\underline{t}}|^{p(l + \alpha + 1)} + C_1 (s - \underline{t})(1 + |X_{\underline{t}}|^{p(l + \alpha + 1)} )\right).
\end{align*}
Therefore, there exists a positive constant $C_2$ such that 
\begin{align*}
 \bE \left[\left| \int_{\underline{t}}^t \sigma(X_s)dW_s \right| ^p| \mf_{\underline{t}}\right] 
& \leq c_p (t-\underline{t})^{p/2-1} \int_{\underline{t}}^t \left(1+ |X_{\underline{t}}|^{p(l + \alpha + 1)} + C_1 (s - \underline{t})(1 + |X_{\underline{t}}|^{p(l + \alpha + 1)} )\right) ds\\
 & \leq C_2 (t-\underline{t})^{p/2}(1+ |X_{\underline{t}}|^{p(l + \alpha + 1)}).
\end{align*}
This fact and estimates \eqref{dix} and \eqref{Eb1} imply the desired result.
 \end{proof}

\begin{Lem}\label{lm4}
For any $p\in [2,\frac{p_0}{2(l + \alpha+1)}]$, there exists a positive constant $C$ such that. for any $t \geq 0,$
$$
 \bE\left[ \left| X_t - X_{\underline{t}}-\sm(X_{\underline{t}})(W_t- W_{\underline{t}})\right|^p|\mf_{\underline{t}}\right] \leq  C(t- \underline{t})^{p}\left(1+|X_{\underline{t}}|^{2p(l + \alpha+1)}\right). $$
\end{Lem}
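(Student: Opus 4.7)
The plan is to start from the decomposition
\begin{align*}
X_t - X_{\underline{t}} - \sm(X_{\underline{t}})(W_t - W_{\underline{t}})
= \int_{\underline{t}}^{t} \mu(X_s)\, ds + \int_{\underline{t}}^{t} \bigl(\sm(X_s) - \sm(X_{\underline{t}})\bigr)\, dW_s,
\end{align*}
then use $|a+b|^p \le 2^{p-1}(|a|^p + |b|^p)$ and estimate each piece separately, pushing the time dependence to $(t-\underline{t})^p$ and all state dependence into a factor of the form $1 + |X_{\underline{t}}|^{2p(l+\alpha+1)}$.

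For the drift integral, I would apply Jensen's inequality and the growth bound \eqref{U4} to get
\begin{align*}
\bE\Bigl[\Bigl|\int_{\underline{t}}^{t} \mu(X_s)\,ds\Bigr|^p \Big|\mf_{\underline{t}}\Bigr]
\le (t-\underline{t})^{p-1} \int_{\underline{t}}^{t} 2^{p-1} L^p \bigl(1 + \bE[|X_s|^{p(l+\alpha+1)} | \mf_{\underline{t}}]\bigr)\, ds,
\end{align*}
and then apply Lemma \ref{lm1} (which requires $p(l+\alpha+1) \le p_0$, guaranteed by the hypothesis $p \le p_0/(2(l+\alpha+1))$) to conclude that this piece is bounded by $C(t-\underline{t})^p(1+|X_{\underline{t}}|^{p(l+\alpha+1)})$, exactly the form required.

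For the stochastic integral, I would use Burkholder--Davis--Gundy together with Jensen to obtain
\begin{align*}
\bE\Bigl[\Bigl|\int_{\underline{t}}^{t}\bigl(\sm(X_s)-\sm(X_{\underline{t}})\bigr)dW_s\Bigr|^p \Big|\mf_{\underline{t}}\Bigr]
\le c_p (t-\underline{t})^{p/2-1} \int_{\underline{t}}^{t} \bE\bigl[|\sm(X_s)-\sm(X_{\underline{t}})|^p\big|\mf_{\underline{t}}\bigr]\,ds.
\end{align*}
The integrand is handled by the local Lipschitz estimate \eqref{U2}, giving $|\sm(X_s)-\sm(X_{\underline{t}})|^p \le L^p(1+|X_s|^{l+\alpha}+|X_{\underline{t}}|^{l+\alpha})^p|X_s - X_{\underline{t}}|^p$. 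Applying Cauchy--Schwarz and then Lemma \ref{lm1} to control $\bE[(1+|X_s|^{l+\alpha}+|X_{\underline{t}}|^{l+\alpha})^{2p}|\mf_{\underline{t}}]$ (which needs $2p(l+\alpha)\le p_0$, again ensured by the hypothesis) together with Lemma \ref{lm3} to control $\bE[|X_s-X_{\underline{t}}|^{2p}|\mf_{\underline{t}}]\le C(s-\underline{t})^p(1+|X_{\underline{t}}|^{2p(l+\alpha+1)})$ (which needs $2p \le p_0/(l+\alpha+1)$, i.e.\ exactly the hypothesis), yields
\begin{align*}
\bE\bigl[|\sm(X_s)-\sm(X_{\underline{t}})|^p\big|\mf_{\underline{t}}\bigr] \le C(s-\underline{t})^{p/2}\bigl(1+|X_{\underline{t}}|^{2p(l+\alpha+1)}\bigr),
\end{align*}
after bounding $p(l+\alpha) + p(l+\alpha+1)/2 \le 2p(l+\alpha+1)$. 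Integrating in $s$ gives $C(t-\underline{t})^{p}(1+|X_{\underline{t}}|^{2p(l+\alpha+1)})$.

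Summing both contributions gives the claim. The only mild subtlety is bookkeeping the exponents so that the final state-dependent factor is no worse than $1+|X_{\underline{t}}|^{2p(l+\alpha+1)}$ and so that every invocation of Lemma \ref{lm1} and Lemma \ref{lm3} falls within the range of exponents permitted by the hypothesis $p \le p_0/(2(l+\alpha+1))$; no new analytic idea beyond the earlier auxiliary lemmas is required.
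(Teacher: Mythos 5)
Your proposal is correct and follows essentially the same route as the paper: the same decomposition into drift and martingale parts, Jensen plus \eqref{U4} and Lemma \ref{lm1} for the drift, and Burkholder with \eqref{U2}, Lemma \ref{lm1} and Lemma \ref{lm3} (at exponent $2p$, which is exactly where the hypothesis $p\le p_0/(2(l+\alpha+1))$ is used) for the stochastic integral. The only cosmetic difference is that you split the product $(1+|X_s|^{l+\alpha}+|X_{\underline{t}}|^{l+\alpha})^p\,|X_s-X_{\underline{t}}|^p$ by Cauchy--Schwarz, whereas the paper uses a weighted Young inequality with weights $(s-\underline{s})^{\pm p/2}$; both yield the same bound.
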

\begin{proof}
We proceed as in the proof of Lemma \ref{lm3}. Observe that the equation \eqref{dynamic eq} yields that for any $p \geq 2,$ 
\begin{align}\label{diffx}
\left| X_t - X_{\underline{t}}-\sm(X_{\underline{t}})(W_t- W_{\underline{t}})\right|^p \leq 2^{p-1}\left( \left|\int_{\underline{t}}^t \mu(X_s)ds \right| ^p+\left|\int_{\underline{t}}^t (\sm(X_s)-\sm(X_{\underline{s}}))dW_s \right| ^p \right).
\end{align} 
Using Burkholder's inequality, there exists a positive constant $c_p$ such that 
\begin{align*}
 \bE \left[\left| \int_{\underline{t}}^t (\sigma(X_s) - \sigma(X_{\underline{s}}))dW_s \right| ^p| \mf_{\underline{t}}\right] \leq  c_p (t-\underline{t})^{p/2-1} \int_{\underline{t}}^t \mathbb{E}\left[|\sigma(X_s) -  \sigma(X_{\underline{s}})|^p |  \mf_{\underline{t}}\right] ds. 
\end{align*}
Thanks to \eqref{U2}, if $\underline{t} < s < t$,
\begin{align*}
&    \mathbb{E}\left[|\sigma(X_s) -  \sigma(X_{\underline{s}})|^p |  \mf_{\underline{t}}\right] \leq L^{p} \mathbb{E}\left[(1+ |X_s|^{l + \alpha} + |X_{\underline{s}}|^{l + \alpha})^p |X_s - X_{\underline{s}}|^p|\mf_{\underline{t}}\right]\\
  & \leq 3^{p-1}L^p  (s - \underline{s})^{p/2}   \mathbb{E}\left[1+ |X_s|^{p(l + \alpha)} + |X_{\underline{s}}|^{p(l + \alpha)}  |\mf_{\underline{t}}\right] 
  +  3^{p-1}L^p  (s - \underline{s})^{-p/2}   \mathbb{E}\left[|X_s- X_{\underline{s}}|^{2p}  |\mf_{\underline{t}}\right].
\end{align*}
It follows from Lemma \ref{lm1} and Lemma \ref{lm3} that there exists a positive constant $C_1$ such that 
\begin{align*}
\mathbb{E}\left[|\sigma(X_s) -  \sigma(X_{\underline{s}})|^p |  \mf_{\underline{t}}\right] \leq C_1 (s- \underline{s})^{p/2}(1 + |X_{\underline{s}}|^{2p(l + \alpha+1)}).
\end{align*}
This fact and estimates \eqref{Eb1} and \eqref{diffx} imply the desired result. 
\end{proof}

\begin{Lem} \label{ulX_ut}
For any $p \in (0, p_0]$, and $T>0$, there exists a finite positive constant $K$ such that, 
$$\sup_{t\geq 0} \mathbb{E}[ |X_{\underline{t}}|^p] \leq K.$$
Moreover, when $\gamma<0$, constant $K$ does not depend on $T$.
\end{Lem}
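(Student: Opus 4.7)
The strategy is to apply the same It\^o + localization argument used in Proposition \ref{moment nghiem dung}, but at the bounded stopping time $\underline{t}$ rather than at a deterministic time. The starting observation is that $\underline{t}$ is indeed an $(\mathcal{F}_s)$-stopping time bounded by $t$: by the iterative construction of the scheme, each discrete time $t_k$ is $\mathcal{F}_{t_k}$-measurable (since $\widehat{X}_{t_k}$ is, and $t_{k+1} = t_k + h(\widehat{X}_{t_k})\Delta$), hence $\underline{t} = \max\{t_k : t_k \le t\}$ is a stopping time.

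For an even integer $p$ with $2 \le p \le p_0$, I would apply It\^o's formula to $e^{-p\gamma s}|X_s|^p$ with the standard localizing sequence $\tau_N = \inf\{s \ge 0 : |X_s| \ge N\}$. Using \textbf{A1} (which, since $p \le p_0$, gives $X_s\mu(X_s) + \frac{p-1}{2}\sigma^2(X_s) \le \gamma X_s^2 + \eta$), the drift of $e^{-p\gamma s}|X_s|^p$ is controlled by $p\eta e^{-p\gamma s}|X_s|^{p-2}$. Taking expectations at the bounded stopping time $\underline{t} \wedge \tau_N$ kills the localized stochastic integral, and Fatou's lemma as $N \to \infty$ yields
\begin{equation*}
\mathbb{E}\!\left[e^{-p\gamma \underline{t}}|X_{\underline{t}}|^p\right] \le x_0^p + p\eta \int_0^T e^{-p\gamma s}\, \mathbb{E}[|X_s|^{p-2}]\, ds,
\end{equation*}
which, using the explicit bound on $\mathbb{E}[|X_s|^{p-2}]$ from Proposition \ref{moment nghiem dung}, gives a finite constant $K = K(T)$. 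For $p \in (0, p_0]$ that is not an even integer, Jensen's inequality applied to the next even integer $\le p_0$ suffices.

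For the $T$-independent bound under $\gamma < 0$, I would combine two ingredients: Remark \ref{rm1}, which gives $\sup_{s\ge 0} \mathbb{E}[|X_s|^q] \le C_q$ for all $q \le p_0$, and the scheme-dependent a priori bound $t - \underline{t} \le h_0 \Delta$ that follows from $h(x) \le h_0$ in \eqref{chooseh}. Writing $|X_{\underline{t}}|^p \le 2^{p-1}\bigl(|X_t|^p + |X_t - X_{\underline{t}}|^p\bigr)$, the first term is uniformly bounded by Remark \ref{rm1}, while the second is controlled by Lemma \ref{lm3} applied with $s = t$ (since $\underline{s} = \underline{t}$ for $s \in [\underline{t},t]$), giving $\mathbb{E}[|X_t - X_{\underline{t}}|^p] \le C(h_0\Delta)^{p/2}(1 + \mathbb{E}[|X_{\underline{t}}|^{p(l+\alpha+1)}])$. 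Either restricting to $p$ with $p(l+\alpha+1) \le p_0$ and bootstrapping downward via Jensen from the top moment $p_0$, or alternatively re-running the exponential-weighting It\^o argument above with $a = -p\gamma/2 > 0$ and Young's inequality $|X_s|^{p-2} \le \epsilon|X_s|^p + C_\epsilon$ (choosing $\epsilon$ so that the $|X_s|^p$ term has a strictly negative coefficient), closes the estimate with a $T$-independent constant.

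The main obstacle is the $\gamma < 0$ case: naive optional stopping combined with the weightings $e^{\pm p\gamma s}$ leaves a factor $\mathbb{E}[e^{-p\gamma \underline{t}}]$ that grows with $T$, so one must either exploit the strict negative-drift coefficient obtained via Young's inequality, or use Lemma \ref{lm3} together with $t - \underline{t} \le h_0\Delta$ and a careful bootstrap in the moment order to avoid the circular dependence on $\mathbb{E}[|X_{\underline{t}}|^{p(l+\alpha+1)}]$.
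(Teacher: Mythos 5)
Your core argument is the paper's argument: localize with $\tau_N=\inf\{s\ge 0:|X_s|\ge N\}$, apply It\^o's formula to an exponentially weighted power of $X$, use \textbf{A1} to reduce the drift to $\eta p\,e^{-\gamma s}|X_s|^{p-2}$, stop at the bounded stopping time $\underline{t}\wedge\tau_N$ to kill the martingale, pass to the limit by Fatou, and finish with Proposition \ref{moment nghiem dung}. Two remarks on the details. First, restricting to \emph{even integer} $p$ and interpolating by Jensen leaves uncovered the range $p\in\bigl(2\lfloor p_0/2\rfloor,\,p_0\bigr]$ (e.g. $p_0=4.8$, $p=4.5$); the paper avoids this by running the It\^o argument for arbitrary real $p\in[2,p_0]$ (the map $x\mapsto|x|^p$ is $C^2$ there) and using H\"older only to descend to $p<2$. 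Second, and more importantly, the single ingredient that makes the whole lemma work — in both the finite-$T$ and the $\gamma<0$ cases — is the deterministic bound $t-\underline{t}\le h_0\Delta$, which converts $\mathbb{E}\bigl[e^{-\gamma\underline{t}}|X_{\underline{t}}|^p\bigr]$ into $e^{-\gamma t}e^{-h_0|\gamma|\Delta}\,\mathbb{E}\bigl[|X_{\underline{t}}|^p\bigr]$ from below. With that in hand the paper needs no extra work for $\gamma<0$: one simply observes $\sup_s\mathbb{E}[|X_s|^{p-2}]<\infty$ (Remark \ref{rm1}) and $e^{\gamma t}\int_0^t e^{-\gamma s}\,ds\le 1/|\gamma|$, and the same one-line estimate is uniform in $T$. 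Your Route B (weight $e^{as}$ with $a=-p\gamma/2>0$ plus Young's inequality to make the $|X_s|^p$ coefficient strictly negative) does work, but only after you invoke exactly this $\underline{t}\ge t-h_0\Delta$ bound at the last step to pass from $\mathbb{E}[e^{a\underline{t}}|X_{\underline{t}}|^p]\le |x_0|^p+Ca^{-1}e^{at}$ to a bound on $\mathbb{E}[|X_{\underline{t}}|^p]$; using only $e^{a\underline{t}}\ge 1$ the right-hand side still grows in $t$. So Route B is a slightly heavier variant of what the paper does.

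Route A, by contrast, has a genuine circularity that your proposed fix does not repair. The decomposition $|X_{\underline{t}}|^p\le 2^{p-1}(|X_t|^p+|X_t-X_{\underline{t}}|^p)$ together with Lemma \ref{lm3} bounds $\mathbb{E}[|X_t-X_{\underline{t}}|^p]$ by $C\Delta^{p/2}\bigl(1+\mathbb{E}[|X_{\underline{t}}|^{p(l+\alpha+1)}]\bigr)$, i.e. by a \emph{strictly higher} moment of the very quantity you are estimating. "Bootstrapping downward from the top moment $p_0$" cannot start: to treat the top exponent $p_0$ via this route you would need $\mathbb{E}[|X_{\underline{t}}|^{p_0(l+\alpha+1)}]$ with $p_0(l+\alpha+1)>p_0$, which is unavailable (and Lemma \ref{lm3} itself is only valid for $p\le p_0/(l+\alpha+1)$), so the recursion escalates rather than terminates. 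Drop Route A and keep Route B — or, more simply, keep your finite-$T$ It\^o estimate and note that it is already $T$-uniform when $\gamma<0$ once $t-\underline{t}\le h_0\Delta$ and Remark \ref{rm1} are used, which is precisely the paper's proof.
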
  

\begin{proof}
Thanks to H\"older's inequality, it is sufficient to prove Lemma \ref{ulX_ut} for $p\in [2,p_0]$. Suppose $p\geq 2$.
For each $N>0$, let $\tau_N = \inf\{t \geq 0: |X_t| \geq N\}$.
It follows from It\^o's formula that 
\begin{align*}
e^{-\gamma (\underline{t}\wedge \tau_N)} |X_{\underline{t}  \wedge \tau_N}|^ p 
=& |x_0|^p +  \int_0^{\underline{t}  \wedge \tau_N} pe^{-\gamma s} |X_s|^{p-2} \Big( X_s \mu(X_s) + \frac 12 (p-1)\sigma^2(X_s) - \gamma|X_s|^2\Big)ds \\
&+ \int_0^{\underline{t}  \wedge \tau_N} p |X_s|^{p-2} e^{-\gamma s}  X_s \sigma(X_s) dW_s.
\end{align*}
Taking expectation of both sides and applying  \textbf{A1}, we get  
\begin{align*}
\bE \left[ e^{-\gamma (\underline{t}\wedge \tau_N)} |X_{\underline{t}  \wedge \tau_N}|^ p \right] 
\leq & |x_0|^p +  \bE \left[ \int_0^{\underline{t}  \wedge \tau_N} \eta p e^{-\gamma s} |X_s|^{p-2} ds\right] \\
\leq & |x_0|^p + \int_0^t \eta p e^{-\gamma s} \bE \left[  |X_s|^{p-2}\right] ds. 
\end{align*}
Let $N \to \infty$ and apply Fatou's lemma, we get 
\begin{align*}
\bE \left[ e^{-\gamma \underline{t}} |X_{\underline{t} }|^ p \right] \leq & |x_0|^p + \int_0^t \eta p e^{-\gamma s} \bE \left[  |X_s|^{p-2}\right] ds,
\end{align*}
which implies that 
\begin{align*}
\bE \left[|X_{\underline{t}  }|^ p \right] \leq & e^{\gamma t} e^{h_0 |\gamma| \Delta} \left( |x_0|^p + \int_0^t \eta p e^{-\gamma s} \bE \left[  |X_s|^{p-2}\right] ds\right).
\end{align*}
Using Proposition \ref{moment nghiem dung}, we get the desired result. 

\end{proof}

\begin{Lem}\label{lm8}
For any $p_0\geq 4 (l+\alpha+1)$, there exists a positive constant $C$ such that, for any $ t \geq 0,$
$$
 \left| \bE\left[ (X_t - X_{\underline{t}}- \widehat{X}_t+ \widehat{X}_{\underline{t}})(W_t- W_{\underline{t}})|\mf_{\underline{t}}\right] \right|
 \leq  C(t- \underline{t})^{\frac{3+\alpha}{2}}(1+|X_{\underline{t}}|^{l + (1+\alpha)(l + \alpha + 1) })+|\sm(X_{\underline{t}})-\sm(\widehat{X}_{\underline{t}})|(t-\underline{t}). $$
\end{Lem}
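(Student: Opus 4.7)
The plan is to expand the difference $(X_t - X_{\underline{t}}) - (\widehat{X}_t - \widehat{X}_{\underline{t}})$ using \eqref{dynamic eq} and \eqref{Milstein3}, multiply by $W_t - W_{\underline{t}}$, and take conditional expectation given $\mf_{\underline{t}}$. The two terms $\mu(\widehat{X}_{\underline{t}})(t-\underline{t})(W_t - W_{\underline{t}})$ and $\tfrac12 q_\Delta(\widehat{X}_{\underline{t}})((W_t-W_{\underline{t}})^2 - (t-\underline{t}))(W_t - W_{\underline{t}})$ have vanishing conditional expectation by the odd-moment identity \eqref{markov2}, while $\sigma(\widehat{X}_{\underline{t}})(W_t-W_{\underline{t}})^2$ contributes $\sigma(\widehat{X}_{\underline{t}})(t-\underline{t})$. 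What remains is to understand $\bE[\int_{\underline{t}}^t \mu(X_s)\,ds \cdot (W_t-W_{\underline{t}})\,|\,\mf_{\underline{t}}]$ and $\bE[\int_{\underline{t}}^t \sigma(X_s)\,dW_s \cdot (W_t-W_{\underline{t}})\,|\,\mf_{\underline{t}}]$, and compare them against $\sigma(\widehat{X}_{\underline{t}})(t-\underline{t})$.

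For the drift integral, Fubini together with the tower property and $\bE[W_t - W_s | \mf_s] = 0$ gives $\bE[\int_{\underline{t}}^t \mu(X_s)\,ds \cdot (W_t - W_{\underline{t}})\,|\,\mf_{\underline{t}}] = \int_{\underline{t}}^t \bE[(\mu(X_s) - \mu(X_{\underline{t}}))(W_s - W_{\underline{t}})\,|\,\mf_{\underline{t}}]\,ds$, where $\mu(X_{\underline{t}})(W_s - W_{\underline{t}})$ was subtracted for free. For the Itô integral, Itô's product formula applied to $M_s\,(W_s - W_{\underline{t}})$ with $M_s := \int_{\underline{t}}^s \sigma(X_u)\,dW_u$ yields $\bE[\int_{\underline{t}}^t \sigma(X_s)\,dW_s \cdot (W_t - W_{\underline{t}})\,|\,\mf_{\underline{t}}] = \int_{\underline{t}}^t \bE[\sigma(X_s)\,|\,\mf_{\underline{t}}]\,ds$, thanks to the covariation $\langle M,W\rangle_s = \int_{\underline{t}}^s \sigma(X_u)\,du$. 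Decomposing $\sigma(X_s) - \sigma(\widehat{X}_{\underline{t}}) = [\sigma(X_s) - \sigma(X_{\underline{t}})] + [\sigma(X_{\underline{t}}) - \sigma(\widehat{X}_{\underline{t}})]$ extracts the $(\sigma(X_{\underline{t}}) - \sigma(\widehat{X}_{\underline{t}}))(t-\underline{t})$ piece, which already matches the second term of the target bound.

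The drift contribution is $O((t-\underline{t})^2)$: using \eqref{U2} with Lemmas \ref{lm1} and \ref{lm3}, $\bE[(\mu(X_s)-\mu(X_{\underline{t}}))^2\,|\,\mf_{\underline{t}}]^{1/2} \leq C(s-\underline{t})^{1/2}$ up to a polynomial in $|X_{\underline{t}}|$, which combined with Cauchy-Schwarz and $\bE[(W_s-W_{\underline{t}})^2\,|\,\mf_{\underline{t}}]^{1/2} = (s-\underline{t})^{1/2}$ integrates to $(t-\underline{t})^2$. The heart of the lemma is $\int_{\underline{t}}^t \bE[\sigma(X_s) - \sigma(X_{\underline{t}})\,|\,\mf_{\underline{t}}]\,ds$; here I apply the Taylor identity \eqref{C^1+alpha} to write $\sigma(X_s) - \sigma(X_{\underline{t}}) = \sigma'(X_{\underline{t}})(X_s - X_{\underline{t}}) + R_s$ with $|R_s| \leq C(1+|X_s|^l+|X_{\underline{t}}|^l)|X_s - X_{\underline{t}}|^{1+\alpha}$. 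Lemma \ref{lm2} controls $\sigma'(X_{\underline{t}})\bE[X_s - X_{\underline{t}}\,|\,\mf_{\underline{t}}]$ by $C(s-\underline{t})\,P(|X_{\underline{t}}|)$, producing an $O((t-\underline{t})^2)$ contribution. For the remainder, Cauchy-Schwarz with Lemmas \ref{lm1} and \ref{lm3} yields $|\bE[R_s\,|\,\mf_{\underline{t}}]| \leq C(s-\underline{t})^{(1+\alpha)/2}(1 + |X_{\underline{t}}|^{l + (1+\alpha)(l+\alpha+1)})$, and integrating delivers exactly $C(t-\underline{t})^{(3+\alpha)/2}$ times the stated polynomial. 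Since $\alpha \leq 1$ and $t-\underline{t} \leq h_0$, the $O((t-\underline{t})^2)$ leftovers are absorbed into this dominant term.

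The main obstacle is recovering the sharp exponent $(3+\alpha)/2$ rather than the $3/2$ that naive Cauchy-Schwarz on the stochastic integral would give. This requires both the Itô product formula trick that turns the stochastic integral into a Lebesgue integral of $\bE[\sigma(X_s)\,|\,\mf_{\underline{t}}]$, and the Taylor expansion step exploiting the $C^{1+\alpha}$ regularity of $\sigma$, so that the remainder carries the $|X_s - X_{\underline{t}}|^{1+\alpha}$ factor whose $L^2$-moment scales like $(s-\underline{t})^{1+\alpha}$ by Lemma \ref{lm3}. The hypothesis $p_0 \geq 4(l+\alpha+1)$ is used precisely to ensure that all the moment bounds invoked from Lemmas \ref{lm1}--\ref{lm3} apply to the exponents generated by the Cauchy-Schwarz splits.
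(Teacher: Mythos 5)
Your proposal is correct and follows essentially the same route as the paper's proof: the same reduction of $\bE\left[\int_{\underline t}^t\sigma(X_s)\,dW_s\cdot(W_t-W_{\underline t})\,\big|\,\mf_{\underline t}\right]$ to $\int_{\underline t}^t\bE[\sigma(X_s)\,|\,\mf_{\underline t}]\,ds$, the same $C^{1+\alpha}_l$ Taylor remainder controlled via Lemmas \ref{lm1} and \ref{lm3} to produce the $(t-\underline t)^{(3+\alpha)/2}$ term (with $p_0\geq 4(l+\alpha+1)$ used exactly where you say), Lemma \ref{lm2} for the first-order drift correction, and the same extraction of $(\sigma(X_{\underline t})-\sigma(\widehat X_{\underline t}))(t-\underline t)$. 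The only difference is organizational: the paper pre-decomposes the increment into six pieces $I_{1t},\dots,I_{6t}$, using $\int_{\underline t}^t(W_s-W_{\underline s})\,dW_s=\tfrac12\left((W_t-W_{\underline t})^2-(t-\underline t)\right)$ to isolate the $\sigma\sigma'$ and $q_\Delta$ contributions (which then vanish by odd moments), whereas you take the conditional expectation first and Taylor-expand afterwards, arriving at the same bounds.
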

\begin{proof}
	The expressions \eqref{Milstein3}, \eqref{dynamic eq} and  $\int_{\underline{t}}^t(W_s- W_{\underline{s}})dW_s= \frac{1}{2}((W_t- W_{\underline{t}})^2-(t-\underline{t}))$ yield that
\begin{align}\label{Diff}
&X_t - X_{\underline{t}}- \widehat{X}_t+ \widehat{X}_{\underline{t}}= I_{1t}+I_{2t}+I_{3t}+I_{4t}+I_{5t}+I_{6t},
\end{align}
where
\begin{align*}
 I_{1t}& =\int_{\underline{t}}^t\left(\mu(X_s)- \mu(X_{\underline{s}})\right)ds, \quad I_{2t} = \left(\mu(X_{\underline{t}})-\mu(\widehat{X}_{\underline{t}})\right)(t-\underline{t}),\\
I_{3t} &= \int_{\underline{t}}^t\left(\sm(X_s)- \sm(X_{\underline{s}})-\sm'(X_{\underline{s}})(X_s- X_{\underline{s}}) \right)dW_s,\quad I_{4t} = \int_{\underline{t}}^t\sm'(X_{\underline{s}})\left(X_s- X_{\underline{s}}-\sm(X_{\underline{s}})(W_s- W_{\underline{s}})\right)dW_s,\\
I_{5t}& = \dfrac{1}{2}\left(\sm(X_{\underline{t}})\sm'(X_{\underline{t}})-q_{\Delta}(\widehat{X}_{\underline{t}})\right)\left((W_t- W_{\underline{t}})^2-(t-\underline{t})\right),\quad I_{6t}= \left(\sm(X_{\underline{t}})- \sm(\widehat{X}_{\underline{t}})\right)(W_t- W_{\underline{t}}).
\end{align*}
First, observe that
\begin{align}\label{I25}
\bE\left[I_{2t}(W_t- W_{\underline{t}})| \mf_{\underline{t}}\right]= \bE\left[I_{5t}(W_t- W_{\underline{t}})| \mf_{\underline{t}}\right]=0.
\end{align}
It follows from Cauchy-Schwart's inequality that 
\begin{align*}
\left| \mathbb{E} \left[ I_{1t}(W_t - W_{\underline{t}}) |\mf_{\underline{t}}\right] \right|^2  
& \leq \bE \left[  I_{1t}^2| \mf_{\underline{t}}\right]   \bE \left[  (W_t - W_{\underline{t}})^2| \mf_{\underline{t}}\right]  \\
& \leq (t - \underline{t})^2  \int_{\underline{t}}^t \bE \left[ ( (\mu(X_s) - \mu(X_{\underline{s}}))^2| \mf_{\underline{t}}\right] ds. 
\end{align*}
Thanks to \eqref{U2}, 
\begin{align*}
\left| \mathbb{E} \left[ I_{1t}(W_t - W_{\underline{t}}) |\mf_{\underline{t}}\right] \right|^2  
& \leq 3L^2 (t - \underline{t})^2  \int_{\underline{t}}^t \bE \left[ ( X_s - X_{\underline{s}})^2 (1 + |X_s|^{2(l + \alpha)} + |X_{\underline{s}}|^{2(l + \alpha)}) | \mf_{\underline{s}}\right] ds\\
& \leq 3L^2 \sqrt{3} (t - \underline{t})^2  \int_{\underline{t}}^t  
\sqrt{\bE \left[ ( X_s - X_{\underline{s}})^4 | \mf_{\underline{s}} \right]}
\sqrt{\bE \left[1 + |X_s|^{4(l + \alpha)} + |X_{\underline{s}}|^{4(l + \alpha)} | \mf_{\underline{s}}\right] }ds. 
\end{align*}
Since $4(l+\alpha+1) \leq p_0$, using apply Lemma \ref{lm1} and Lemma \ref{lm3}, there exists a positive constant $C_1$ such that, for any $t\geq 0$, 
\begin{align*}
\left| \mathbb{E} \left[ I_{1t}(W_t - W_{\underline{t}}) |\mf_{\underline{t}}\right] \right|^2  
&\leq C_1  (t - \underline{t})^4 \sqrt{(1+|X_{\underline{t}}|^{4(l + \alpha + 1)})(1 + |X_{\underline{t}}|^{4(l + \alpha )})},
\end{align*}
which implies that 
\begin{align} \label{I1W}
\left| \mathbb{E} \left[ I_{1t}(W_t - W_{\underline{t}}) |\mf_{\underline{t}}\right] \right| 
&\leq \sqrt{2C_1}  (t - \underline{t})^2 (1 + |X_{\underline{t}}|^{2(l + \alpha) + 1}).
\end{align}
Now, note that
\begin{align*}
\bE[I_{3t}(W_t- W_{\underline{t}}) | \mf_{\underline{t}}] =\bE\left[\int_{\underline{t}}^t \left(\sm(X_s)- \sm(X_{\underline{s}})-\sm'(X_{\underline{s}})(X_s- X_{\underline{s}}) \right)ds | \mf_{\underline{t}}\right]. 
\end{align*}
Then, using the argument in \eqref{C^1+alpha} and  \eqref{U6}, there exists a positive constant $C_2$ such that, for all $s\geq0$, 
\begin{align*}
|\sm(X_s)- \sm(X_{\underline{s}})-\sm'(X_{\underline{s}})(X_s- X_{\underline{s}})| & \leq C_2 (1+|X_s|^{l}+|X_{\underline{s}}|^{l}) |X_s- X_{\underline{s}}|^{1+\alpha}. 
\end{align*}
Thus, 
\begin{align*}
|\bE[I_{3t}(W_t- W_{\underline{t}}) | \mf_{\underline{t}}]| 
&\leq  \sqrt{3} C_2 \int_{\underline{t}}^t  \sqrt{\bE \left[ 1 + |X_s|^{2l} + |X_{\underline{s}}|^{2l}  | \mf_{\underline{s}}\right] } \sqrt{\bE \left[ |X_s - X_{\underline{s}}|^{2(1+ \alpha)}  | \mf_{\underline{s}}\right] }ds.
\end{align*}
Since $2(1+\alpha)(l+\alpha+1) \leq 4 (l+\alpha+1) \leq p_0$ and $2l \leq p_0$, using Lemma \ref{lm1} and Lemma \ref{lm3}, there exists a positive constant $C_3$ such that, for any $t\geq0$, 
\begin{align} \label{I3W}
|\bE[I_{3t}(W_t- W_{\underline{t}}) | \mf_{\underline{t}}]| \leq C_3  (t - \underline{t})^{(3+\alpha)/2} (1 + |X_{\underline{t}}|^{l+(1 + \alpha)(l+\alpha+1)}).
\end{align}
Next, we have 
\begin{align*}
&\bE[I_{4t}(W_t- W_{\underline{t}}) | \mf_{\underline{t}}] =\bE\left[\int_{\underline{t}}^t\sm'(X_{\underline{s}})\left(X_s- X_{\underline{s}}-\sm(X_{\underline{s}})(W_s- W_{\underline{s}})\right)ds| \mf_{\underline{t}}\right] \\
&= \bE\left[ \int_{\underline{t}}^t\sm'(X_{\underline{s}})
\left( \int^s_{\underline{s}} \mu(X_u)du 
+ \int^s_{\underline{s}}  (\sigma(X_u) - \sigma(X_{\underline{s}}))dW_u \right)ds  | \mf_{\underline{t}}\right]  = \sm'(X_{\underline{t}})  \int_{\underline{t}}^t \int^s_{\underline{s}} \bE[ \mu(X_u) | \mf_{\underline{t}}] du ds\textcolor{black}{.} 
\end{align*}
Thus, it follows from \eqref{U4} and \eqref{U6} that there exists a constant $C_4$ such that, for any $t\geq0$, 
\begin{align*}
&|\bE[I_{4t}(W_t- W_{\underline{t}}) | \mf_{\underline{t}}] | \leq C_4 (1+ |X_{\underline{t}}|^{l + \alpha})  \int_{\underline{t}}^t \int^s_{\underline{s}} \bE[ 1+ |X_u|^{l + \alpha + 1} | \mf_{\underline{t}}] du ds. 
\end{align*}
Since $l + \alpha + 1\leq p_0$, using Lemma \ref{lm1}, there exists a positive constant $C_5$ such that, for any $t\geq0$, 
\begin{align} \label{I4W}
&|\bE[I_{4t}(W_t- W_{\underline{t}}) | \mf_{\underline{t}}] | \leq C_5  (t - \underline{t})^2 (1 + |X_{\underline{t}}|^{2(l + \alpha)+1} ).
\end{align}
Moreover,
\begin{align}\label{Jw}
\bE[I_{6t}(W_t- W_{\underline{t}}) | \mf_{\underline{t}}]=\left(\sm(X_{\underline{t}})-\sm(\widehat{X}_{\underline{t}})\right)(t-\underline{t}).
\end{align}
Therefore, combining \eqref{Diff}, \eqref{I25}, \eqref{I1W}, \eqref{I3W}, \eqref{I4W}, \eqref{Jw}, the desired result follows.
\end{proof}


\begin{Lem}\label{lm9}
Let $a \in \mathbb{R}$ and $\epsilon \in (0; +\infty)$ be fixed. There exists a positive constant $C$ which does not depend on $\Delta$, such that for any $ t \geq 0$ 
\begin{align*}
 \bE\left[ a\left| X_t -  \widehat{X}_t\right|^2|\mf_{\underline{t}}\right] &
 \leq  (a+\epsilon)|X_{ \underline{t}}- \widehat{X}_{\underline{t}}|^2+(a+\epsilon)(t-\underline{t})|\sm(X_{\underline{t}})-\sm(\widehat{X}_{\underline{t}})|^2\\
 &\quad+C\left(\dfrac{a^2}{\epsilon}+|a|\right)(t-\underline{t})^2\left(1+ |X_{\underline{t}}|^{4(l + \alpha+1)}\right)
 +C \left(\dfrac{a^2}{\epsilon}+|a|\right)\Delta^2.
 \end{align*}
\end{Lem}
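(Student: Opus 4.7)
The plan is to exploit the natural Milstein-like decomposition
$$X_t - \widehat{X}_t = Z + V + R,$$
where $Z := X_{\underline{t}} - \widehat{X}_{\underline{t}}$ is $\mf_{\underline{t}}$-measurable, $V := (\sigma(X_{\underline{t}}) - \sigma(\widehat{X}_{\underline{t}}))(W_t - W_{\underline{t}})$ is centered given $\mf_{\underline{t}}$ with $\bE[V^2\mid\mf_{\underline{t}}] = (\sigma(X_{\underline{t}}) - \sigma(\widehat{X}_{\underline{t}}))^2(t-\underline{t})$, and
$$R := \bigl(X_t - X_{\underline{t}} - \sigma(X_{\underline{t}})(W_t - W_{\underline{t}})\bigr) - \bigl(\widehat{X}_t - \widehat{X}_{\underline{t}} - \sigma(\widehat{X}_{\underline{t}})(W_t - W_{\underline{t}})\bigr) =: P - Q$$
is the higher-order remainder. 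Expanding the square and taking conditional expectation given $\mf_{\underline{t}}$, the cross term $2ZV$ vanishes since $\bE[V\mid\mf_{\underline{t}}]=0$, so
$$\bE\!\left[a|X_t - \widehat{X}_t|^2\mid\mf_{\underline{t}}\right] = aZ^2 + a(t-\underline{t})(\sigma(X_{\underline{t}}) - \sigma(\widehat{X}_{\underline{t}}))^2 + a\bE[R^2\mid\mf_{\underline{t}}] + 2aZ\bE[R\mid\mf_{\underline{t}}] + 2a\bE[VR\mid\mf_{\underline{t}}].$$
This already produces the two leading terms on the right-hand side with coefficient $a$.

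Next, I upgrade those coefficients from $a$ to $a+\epsilon$ by applying the pointwise Young inequalities
$$|2aZR| \le \epsilon Z^2 + \tfrac{a^2}{\epsilon}R^2,\qquad |2aVR| \le \epsilon V^2 + \tfrac{a^2}{\epsilon}R^2,$$
together with $aR^2 \le |a|R^2$. Taking conditional expectations (absorbing any sign issues by passing to absolute values, so the bound is valid even when $a<0$ or $a+\epsilon<0$), the two $\epsilon$-terms combine with the previous display to upgrade the two leading coefficients to $(a+\epsilon)$, while the $R^2$-contributions aggregate into a single factor of size $|a| + 2a^2/\epsilon$ in front of $\bE[R^2\mid\mf_{\underline{t}}]$.

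The final step is to control $\bE[R^2\mid\mf_{\underline{t}}]$. Using $R^2 \le 2P^2 + 2Q^2$, I invoke Lemma \ref{lm4} with $p=2$ (permissible since $p_0 \ge 4(l+\alpha+1)$) to obtain
$$\bE[P^2\mid\mf_{\underline{t}}] \le C(t-\underline{t})^2\bigl(1+|X_{\underline{t}}|^{4(l+\alpha+1)}\bigr),$$
and Lemma \ref{lm5}(ii) with $p=2$ to obtain $\bE[Q^2\mid\mf_{\underline{t}}] \le C\Delta^2$. Multiplying by $|a| + 2a^2/\epsilon$ yields exactly the two error terms in the statement, and combining with the previous step completes the proof.

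The argument is essentially bookkeeping: the only point requiring care is that $a+\epsilon$ may be negative, which is handled by applying Young's inequality to the absolute values of the cross terms so that one still obtains an \emph{upper} bound. All analytic content has been packaged into the earlier Lemmas \ref{lm4} and \ref{lm5}(ii), so no further obstacle arises.
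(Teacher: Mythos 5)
Your proof is correct and follows essentially the same route as the paper: the paper writes $X_t-\widehat{X}_t = A - B + C$ with $C = Z+V$ and $A-B = R$ in your notation, uses the single algebraic inequality $a(A-B+C)^2 \le 2(\tfrac{a^2}{\epsilon}+|a|)(A^2+B^2)+(a+\epsilon)C^2$ in place of your two separate Young steps, and then invokes the same Lemmas \ref{lm4} and \ref{lm5}(ii) with $p=2$. The only difference is the harmless constant $|a|+2a^2/\epsilon$ versus $2(|a|+a^2/\epsilon)$, both absorbed into $C$.
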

\begin{proof}
We write $X_t -  \widehat{X}_t = A - B+C $ with $A= X_t - X_{\underline{t}}- \sm(X_{\underline{t}})(W_t- W_{\underline{t}})$, $B= \widehat{X}_t-\widehat{X}_{\underline{t}}-\sm(\widehat{X}_{\underline{t}})(W_t- W_{\underline{t}})$,  and $C = X_{\underline{t}}-\widehat{X}_{\underline{t}}+(\sm(X_{\underline{t}})-\sm(\widehat{X}_{\underline{t}}))(W_t- W_{\underline{t}})$. Note that $a(A-B+C)^2 \leq 2(\frac{a^2}{\epsilon} + |a|)(A^2 + B^2) + (a + \epsilon)C^2$ valid for any $a\in \mathbb{R}, \epsilon>0$. Thus, 
\begin{align}\label{aX}
a\left| X_t -  \widehat{X}_t\right|^2 &\leq (a+\epsilon)\left| X_{\underline{t}} - \widehat{X}_{\underline{t}}+(\sm(X_{\underline{t}})-\sm(\widehat{X}_{\underline{t}}))(W_t- W_{\underline{t}})\right|^2 \notag\\
&\quad+2\left(\dfrac{a^2}{\epsilon}+|a|\right)\left(\left|X_t- X_{\underline{t}}-\sm(X_{\underline{t}})(W_t- W_{\underline{t}})\right|^2+\left|\widehat{X}_t-\widehat{X}_{\underline{t}}-\sm(\widehat{X}_{\underline{t}})(W_t- W_{\underline{t}})\right|^2\right).
\end{align}
Now, \eqref{markov2} yields to
$\bE\left[2( X_{\underline{t}} - \widehat{X}_{\underline{t}})(\sm(X_{\underline{t}})-\sm(\widehat{X}_{\underline{t}}))(W_t- W_{\underline{t}})|\mf_{\underline{t}}\right]=0,$
which deduces that
\begin{align}\label{xsg}
\bE\left[| X_{\underline{t}} - \widehat{X}_{\underline{t}}+(\sm(X_{\underline{t}})-\sm(\widehat{X}_{\underline{t}}))(W_t- W_{\underline{t}})|^2|\mf_{\underline{t}}\right]& = \left| X_{\underline{t}} - \widehat{X}_{\underline{t}}\right|^2+ \bE\left[\left|(\sm(X_{\underline{t}})-\sm(\widehat{X}_{\underline{t}}))(W_t- W_{\underline{t}})\right|^2|\mf_{\underline{t}}\right] \notag\\
&=  |X_{ \underline{t}}- \widehat{X}_{\underline{t}}|^2+|\sm(X_{\underline{t}})-\sm(\widehat{X}_{\underline{t}})|^2(t-\underline{t}).
\end{align}
Consequently, combining \eqref{aX}, \eqref{xsg}  with Lemma \ref{lm4} and Lemma \ref{lm5}, we obtain the desired result. 
\end{proof}

\section*{Acknowledgment} 

This research is funded by the Vietnam Ministry of Education and Training under grant number B2024-CTT-05.

\end{document}